\documentclass[12pt]{article}
\usepackage{hyperref}
\usepackage{mathtools,amssymb, amsthm,bm,latexsym,todonotes}
\usepackage{verbatim,comment}
\newtheorem{theorem}{Theorem}
\newtheorem{lemma}[theorem]{Lemma}
\newtheorem{corollary}[theorem]{Corollary}

\newtheorem{definition}[theorem]{Definition}

\newtheorem{problem}{Open Problem}
 \def\fn{\mbox{Fn}}
 
 \def\dom{\mbox{dom}}

\def\fc{{\mbox{\scriptsize fast}}}

\def\force{\Vdash}
\def\supp{\mbox{supp}}
\def\hod{\mathop{\mbox{HOD}}}

\def\P{{\mathcal P}}

\def\N{{\Bbb N}}
\def\bP{{\Bbb P}}
\def\bQ{{\Bbb Q}}
\def\bR{{\Bbb R}}
\def\R{{\Bbb R}}
\def\C{{\Bbb C}}
\def\bDelta{\mathbf{\Delta}}
\def\bSigma{\mathbf{\Sigma}}

\DeclarePairedDelimiter{\abs}{\lvert}{\rvert}
\DeclarePairedDelimiter{\set}{\lbrace}{\rbrace}

\DeclareMathOperator{\powset}{\mathcal P}
\DeclareMathOperator{\Th}{Th}

\DeclareMathOperator{\ran}{ran}

\DeclareMathOperator{\Coll}{Coll}
\newcommand{\Pmax}{{\bP_\text{max}}}

\newcommand{\isom}{\cong}

\DeclareMathOperator{\crit}{crit}

\newcommand{\limp}{\to}
\newcommand{\lequ}{\leftrightarrow}

\usepackage{lineno}
\begin{document}

\title{On the categoricity of complete second order theories\thanks{The first and second author would like to thank  the Academy of Finland, grant no: 322795. This project has received funding from the European Research Council (ERC) under the European Union’s Horizon 2020 research and innovation programme (grant agreement No 101020762).}}
\author{Tapio Saarinen\\
University of Helsinki \and Jouko V\"a\"an\"anen\\
University of Helsinki and \\
University of Amsterdam \and Hugh Woodin\\
Harvard University}
\date{}
\maketitle

\begin{abstract}
We show, assuming PD, that every complete finitely axiomatized second order theory with a countable model is categorical, but that there is, assuming again PD, a complete recursively axiomatized second order theory with a countable model which is non-categorical. We show that the existence of even very large (e.g. supercompact) cardinals  does not imply the categoricity of all finite complete second order theories. More exactly,  we show that a non-categorical complete finitely axiomatized second order theory can always be obtained by (set) forcing. We also show that the categoricity of all finite complete second order theories with a model of a certain singular cardinality $\kappa$ of uncountable cofinality can be forced over any model of set theory. Previously, Solovay had proved, assuming $V=L$, that every  complete finitely axiomatized second order theory (with or without a countable model) is categorical, and that in a generic extension of $L$ there is a complete  finitely axiomatized second order theory with a countable model which is non-categorical. 
\end{abstract}

\section{Introduction}
A second order theory $T$ is  \emph{complete} if it decides, in the semantical sense, every second order sentence $\phi$ in its own vocabulary  i.e.  if for every such $\phi$ either $T\models\phi$ or $T\models\neg\phi$, or equivalently, all models of $T$ are second order equivalent. 
The question we investigate in this paper is whether every complete second order theory is \emph{categorical} in the sense that all of its models are isomorphic. 
Already in 1928 Fraenkel \cite{zbMATH02575684}  mentions this question  as a question `calling for clarification'.   Carnap \cite{MR1768840} claimed a positive answer, but his proof had an error (see \cite{MR1951297}).

For mere cardinality reasons there are always complete non-categorical second order theories. One needs only  consider  models of the empty vocabulary. Since there are only continuum many different second order theories, there must be two such models of different cardinality with the same (\emph{a fortiori }complete) second order theory.   

Categoricity of complete second order theories would follow if all second order equivalent models were isomorphic, which is not the case again for cardinality reasons. However, if $V =L$, then \emph{countable} second order equivalent models are isomorphic \cite{Ajtai} and, moreover,  every complete finitely axiomatized 
second order theory is categorical \cite{Solo}. But if a  Cohen real is added to a model of $V=L$, then there are countable non-isomorphic second order equivalent  models \cite{Ajtai}, and if $\aleph_1$ Cohen-reals are added to a model of $V=L$,  there is a complete finitely axiomatized second order theory (with a countable model) which is non-categorical \cite{Solo}.

Fra\"iss\'e \cite{MR0035811,MR0042374} conjectured that  countable  second order equivalent ordinals are equal. Marek \cite{MR0381990,MR0381991} showed that Fra\"iss\'e's conjecture is true under the assumption $V=L$, and false in a forcing extension obtained by collapsing an inaccessible cardinal to $\omega_1$. 


The ambitious goal in the area of  this paper is to decide in a definitive way the status of categoricity of complete second order theories. Since we are dealing with a question that cannot be decided in ZFC alone,  it is natural to make an assumption such as PD, a consequence of the existence of large cardinals (e.g. infinitely many Woodin cardinals). We offer a partial solution to the  full question by solving the case of second order theories with countable models. We have also partial results about theories with uncountable models. In particular, we show that a non-categorical complete finitely axiomatized second order theory can always be obtained by (set) forcing. This shows that large cardinal assumptions cannot imply, as $V=L$ does, the categoricity of all complete finitely axiomatized second order theories.
\medskip

\noindent \emph{Notation:} We recall the usual definition of the beth hierarchy: $\beth_0=\omega$, $\beth_{\alpha+1}=2^{\beth_\alpha}$, and $\beth_\nu=\sup_{\alpha<\nu}\beth_\alpha$ for limit $\nu$. An ordinal $\alpha$ is called a \emph{beth fixed point} if $\alpha=\beth_{\alpha}$. If $\mu$ is a cardinal, we use $\fn(I,J,\mu)$ to denote the poset of partial functions $I\to J$ of cardinality $<\mu$, ordered by $p\le q\iff q\subseteq p$. The trivial poset $\fn(\emptyset,\emptyset,1)$ is denoted $(\{0\},=)$.

We denote the second order theory of a structure $M$ by $\Th_2(M)$. A second order theory $T$ is complete if $\Th_2(M) = \Th_2(N)$ for all $M, N \models T$, and $T$ is categorical if $M \isom N$ for all $M, N \models T$. For second order sentences $\phi, \psi$ we write $\phi \models \psi$ to mean $M \models \phi$ implies $M \models \psi$ for all $M$, and similarly $T \models T'$ for second order theories $T, T'$, and we say $T$ axiomatizes $T'$. If $T$ is a finite (resp. recursive) set of sentences and $T \models T'$, we say $T'$ is finitely (resp. recursively) axiomatizable. 

A cardinal $\lambda$ is second order characterizable if there is a second order sentence $\phi$ in the empty vocabulary such that $N \models \phi$ if and only if $\abs N = \lambda$.

\section{The case of $L[U]$}

It is already known that if $V=L$, then every complete finitely axiomatized second order theory is categorical \cite{Solo}. We now show that this also holds if $V = L[U]$, and we show there are complete recursively axiomatized second order theories that are non-categorical (with very large models).

Assuming $V=L[U]$, we write $\kappa$ for the sole measurable cardinal, $U$ for the unique normal measure on $\kappa$ and $<_{L[U]}$ for the canonical well-order. By a $L[U]$-premouse we mean a structure $(L_\alpha[W],\in,W)$ where $W$ is an $L_\alpha[W]$-ultrafilter on some $\gamma < \alpha$. Recall that a premouse $(L_\alpha[W], \in, W)$ is iterable (under taking iterated ultrapowers), i.e. that every iterate is well-founded, if and only if every iterate in an iteration of any countable length is well-founded. Observe that every iterate in an iteration of countable length has the same cardinality as the original premouse, so the iterability of a premouse is expressible in second order logic. See for example \cite[chapter 20]{HigherInf} for more details.

\begin{theorem}
Assume $V = L[U]$. Every complete finitely axiomatized second order theory is categorical.
\end{theorem}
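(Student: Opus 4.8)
The plan is to adapt Solovay's argument for $V=L$ \cite{Solo} to the canonical well-order $<_{L[U]}$. Let $\phi$ be a single second order sentence axiomatizing a complete theory, and let $\lambda_0$ be the least cardinality of a model of $\phi$ and $M_0$ the $<_{L[U]}$-least model of $\phi$ of cardinality $\lambda_0$. First I would write a second order sentence $\psi$ asserting of a structure $M$ in the vocabulary of $\phi$ that (i) $M\models\phi$, (ii) the cardinality of the universe is least among cardinalities carrying a model of $\phi$, and (iii) $M$ is $<_{L[U]}$-least among all models of $\phi$ of that cardinality. Then $M_0\models\psi$, so by completeness $\psi\in\Th_2(N)$ for every $N\models\phi$; hence every such $N$ has cardinality $\lambda_0$ and is the $<_{L[U]}$-least model of $\phi$ of cardinality $\lambda_0$, forcing $N\isom M_0$ and giving categoricity.

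The whole difficulty is to verify that such a $\psi$ is expressible in second order logic over an \emph{arbitrary} model $M$ of cardinality $\lambda$. Clause (ii) is routine: structures of cardinality $\mu<\lambda$ can be coded as relations on a subset of $M$ of size $\mu$, so ``$\phi$ has no model of cardinality $<\abs{M}$'' is a second order statement. For clause (iii) I need, inside $M$, to reconstruct the initial segment $L_{\lambda^+}[U]=H(\lambda^+)^{L[U]}$ together with the restriction of $<_{L[U]}$ to it; since any model of $\phi$ of cardinality $\lambda$ is coded by a relation on $M$ and lies in $L_{\lambda^+}[U]$, comparing $M$ with all such coded models in the reconstructed order then expresses ``$M$ is $<_{L[U]}$-least.''

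The reconstruction of $L_{\lambda^+}[U]$ splits according to the position of $\lambda$ relative to the measurable $\kappa$. If $\lambda<\kappa$, then since $\kappa$ is inaccessible we have $\lambda^+<\kappa$, and because every element of $U$ is a subset of $\kappa$ first appearing at a level $\geq\kappa$, we have $L_{\lambda^+}[U]=L_{\lambda^+}$; Solovay's $L$-argument then applies verbatim, as $<_{L[U]}$ and $<_L$ agree on this segment. If $\lambda\geq\kappa$, then $U$ genuinely matters, and here the premouse machinery set up before the statement is exactly the tool. A coded structure $(L_\alpha[W],\in,W)$ of cardinality $\leq\lambda$ is an initial segment of the true $L[U]$ precisely when it is an iterable $L[U]$-premouse correctly situated under coiteration, and iterability is second order expressible as noted above. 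Using the comparison (coiteration) of premice, which linearly orders the iterable ones, I would identify $\kappa$ as the critical point of the measure on the reconstructed hierarchy and pick out, level by level, the genuine initial segments $L_\alpha[U]$ for all $\alpha<\lambda^+$, thereby defining $<_{L[U]}\restriction L_{\lambda^+}[U]$.

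The main obstacle is this last reconstruction for $\lambda\geq\kappa$: correctly isolating, among all iterable premice coded on $M$, those that are genuine initial segments of $L[U]$, rather than iterates carrying a measure on the wrong critical point. The work is to show that iterability together with the comparison lemma uniquely pins down the true $L[U]$-sequence inside second order logic over $M$, so that the resulting order agrees with the real $<_{L[U]}$ on $H(\lambda^+)^{L[U]}$. Once this correctness and uniqueness is established, clause (iii) is expressible and the completeness-plus-well-order argument closes as above.
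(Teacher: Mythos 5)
Your overall architecture --- pin down the $<_{L[U]}$-least model of $\phi$ (of the least cardinality $\lambda_0$) by a second order sentence and then invoke completeness --- is exactly the paper's, and your clause (ii) is handled the same way. But your case split assigns the easy and hard arguments to the wrong sides of $\kappa$, and the case $\lambda<\kappa$ as you present it is simply wrong. The claim that for $\lambda<\kappa$ the relevant segment is $L_{\lambda^+}[U]=L_{\lambda^+}$ and ``Solovay's $L$-argument applies verbatim'' conflates the \emph{level} $L_{\lambda^+}[U]$ of the $L[U]$-hierarchy with $H(\lambda^+)^{L[U]}$. These are very different below a measurable: $L[U]$ contains subsets of $\omega$ (e.g.\ $0^\sharp$) that are not in $L$ at all, and more generally a subset of $\lambda<\kappa$ lying in $L[U]$ need not appear in any $L_\beta[U]$ with $\beta<\lambda^+$ --- GCH below $\kappa$ in $L[U]$ is Silver's theorem, proved via iterable premice and comparison, not via naive condensation. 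So the $<_{L[U]}$-least model of $\phi$ of cardinality $\lambda<\kappa$ need not lie in $L$, and a sentence quantifying only over codes for levels $L_\beta$, $\beta<\lambda^+$, will fail to see it (and $<_{L[U]}$ does not restrict to $<_L$ on $H(\lambda^+)^{L[U]}$). This is precisely the case where the paper deploys the premouse machinery: one codes an iterable premouse $(L_\alpha[W],\in,W)$ of size $\lambda$ with $W$ a measure on some $\gamma>\lambda$, and uses coiteration of this premouse with $L[U]$ (both critical points exceeding $\lambda$, hence fixing the candidate models) to show that ``$<_{L[W]}$-least'' computes the true $<_{L[U]}$-least model.

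Conversely, the case you flag as the main obstacle, $\lambda\geq\kappa$, is the one where no premouse comparison is needed and you leave it unresolved. For $\lambda>\kappa$ the measure $U$ itself is (a collapse image of) an element of a structure of size $\lambda$, so one can directly assert the existence of a well-founded extensional $E$ whose transitive collapse is a level of $L[u]$ containing a normal measure $u$; uniqueness of the normal measure in $L[U]$ forces $u$ to collapse to $U$, and one then reads off the $<_{L[u]}$-least model exactly as in the $L$ argument, with no need to ``isolate genuine initial segments among iterates.'' (The remaining case $\lambda=\kappa$ is vacuous: $\kappa$ is $\Pi^2_1$-indescribable, so a complete sentence cannot have all its models of size exactly $\kappa$; your proposal does not address this.) In short, the skeleton is right, but the substantive content --- which tool works on which side of $\kappa$, and why --- is inverted, and the $\lambda<\kappa$ argument as written does not go through.
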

\begin{proof}
Suppose $\phi$ is a complete second order sentence in a vocabulary with a single binary relation symbol $R$ (for simplicity). Note first that $\phi$ has models in only one cardinality. If not, let $N$ be a model of $\phi$ of least cardinality, and $M$ another model with $\abs M > \abs N$. Let $\theta$ be the sentence
\[
\exists P \exists R' (\theta'(P) \land \phi'(P, R'))
\]
where
\begin{itemize}
\item
$P$ is a unary predicate, not occurring in $\phi$, and $R'$ is a binary relation symbol not occurring in $\phi$.
\item $\phi'(P,R')$ is a modification of the sentence $\phi$, where the first order quantifiers $\exists x \dots$ are relativised to $P$ as $\exists x (P(x) \land {\dots})$, and each occurrence of $R$ is replaced by $R'$.
\item $\theta'(P)$ says that the cardinality of $P$ is smaller than the ambient domain of the model (for example, that there is no injective function with range contained in $P$).
\end{itemize}
As $\phi$ is complete and $M \models \theta$ (by taking $(P, R')$ isomorphic to $N$), also $N \models \theta$, so there is a model of $\phi$ of cardinality smaller than that of $N$, which is a contradiction. Thus all models of $\phi$ have the same cardinality.

Now let $M_0$ be the $<_{L[U]}$-least model of $\phi$. Suppose first that $\abs{M_0} > \kappa$: in this case we can mimic the categoricity argument for $L$ as follows. Let $\theta$ be the sentence
\[
\exists E \exists u \exists m \exists P \exists R'(\theta'(E,u) \land \phi'(P,R') \land \theta_{least}(E,u,m) \land \theta_{isom}(E,m,P,R'))
\]
where
\begin{itemize}
\item $E, R'$ are binary predicate symbols, $P$ a unary predicate symbol and $u, m$ are first order variables, none occurring in $\phi$ (the intuition is that $E$ is $\in$, $u$ is a normal ultrafilter, $m$ is a structure in the vocabulary of $\phi$, $P$ is the domain of $m$ and $R$ is the single binary relation of $m$).
\item $\theta'(E,u)$ states $E$ is well-founded and extensional (so that $E$ has a transitive collapse, and the domain of the model equipped with $E$ can be thought of as a transitive set), and its collapse is a level of $L[u]$ having a normal measure $u$ as an element.
\item $\phi'(P,R')$ is (as before) a modification of the sentence $\phi$ where each first order quantifier $\exists x \dots$ is relativised to $P$ as $\exists x(P(x) \land \dots)$, and each occurrence of $R$ is replaced by $R'$.
\item $\theta_{least}(E,u,m)$ says $m <_{L[u]} m'$ for any other $m' = (Q,S)$ also satisfying $\phi'(Q,S)$ (using the formula defining the canonical well-order of $L[u]$ with $u$ as a parameter).
\item $\theta_{isom}(E,m,P,R')$ states that $m = (P,R')$, and that $(P,R')$ is isomorphic to the ambient model (so there is an injection $F$ with range $P$ such that $R(x,y) \leftrightarrow R'(F(x),F(y))$ for all $x,y$).
\end{itemize}
If $M \models \theta$ with witnesses $E$, $u$ and $m = (P,R')$, and $\pi \colon (M,E) \to (N,\in)$ is the transitive collapse, then $\pi(u) = U$ is the unique normal measure $U$ on $\kappa$, $N = L_\alpha[U]$ for some $\alpha$ and $\pi(m)$ is the $<_{L[U]}$-least model $M_0$ of $\phi$, so $M$ is isomorphic to $M_0$.

Conversely, let $\alpha$ be least such that $M_0 \in L_\alpha[U]$. Then $\kappa < \alpha < \abs{M_0}^+$ and $U \in L_\alpha[U]$, so we may pick a bijection $\pi \colon M_0 \to L_\alpha[U]$ and let $E$, $u$ and $m = (P,R')$ be the preimages of $\in$, $U$ and $M_0$ under $\pi$ to witness $M_0 \models \theta$.

Thus the above sentence $\theta$ is such that $M \models \theta$ if and only if $M$ is isomorphic to the $<_{L[U]}$-least model of $\phi$. Now if $M \models \phi$, also $M \models \theta$ by completeness of $\phi$, so $M$ is isomorphic to $M_0$ and $\phi$ is categorical.

\medskip

Suppose now that $\abs{M_0} = \lambda < \kappa$. In this case we cannot find a binary relation $E$ on $M_0$ and $u \in M_0$ such that $u$ is a normal measure in the transitive collapse of $(M_0,E)$, so we modify the previously produced sentence $\theta$. This argument relies on a straightforward modification of the $\Delta^1_3$ well-order of reals in $L[U]$. We make the further assumption that the domain of $M_0$ is a cardinal (and that $M_0$ is the $<_{L[U]}$-least among such models), and let $\theta$ be the sentence
\[
\exists E \exists W \exists m \exists P \exists R'(\theta'(E,W) \land \phi'(E,P,R') \land \theta_{least}(E,W,m) \land \theta_{isom}(E,m,P,R'))
\]
where
\begin{itemize}
\item $E, R'$ are binary and $W,P$ unary predicate symbols, and $m$ a first order variable, none occurring in $\phi$.
\item $\theta'(E,W)$ states $E$ is well-founded and extensional, whose transitive collapse is an iterable $L[U]$-premouse $(L_\alpha[W],\in,W)$ for some $\alpha$, where $W$ is a $L[W]$-ultrafilter on some $\gamma$, where $\gamma$ is an ordinal greater than the cardinality of the ambient model.
\item $\phi'(E,P,R')$ is the sentence $\phi'(P,R')$ from before, with the additional stipulation that the extent of the predicate $P$ is a cardinal.
\item $\theta_{least}(E,W,m)$ says $m <_{L[W]} m'$ for any other $m' = (Q,S)$ also satisfying $\phi'(E,Q,S)$ (using the formula defining the canonical well-order of $L[W]$ with $W$ as a predicate).
\item $\theta_{isom}(E,m,P,R')$ remains unchanged from earlier.
\end{itemize}
We claim that $\theta$ is a sentence such that $M \models \theta$ if and only if $M$ is isomorphic to the $<_{L[U]}$-least model of $\phi$ (among models whose domain is a cardinal). So suppose $M \models \theta$ with witnesses $E, W$ and $m = (P,R')$, and let $\pi \colon (M,E,W) \to (N,\in,W')$ be the transitive collapse. Then $W' = \pi''(W)$ is a $N$-ultrafilter on some $\gamma > \lambda$ and $N = L_\alpha[W']$ for some $\alpha > \gamma$, and $\pi(m)$ is the $<_{L[W']}$-least model of $\phi$ in $L_\alpha[W']$, to which $M$ is isomorphic.

To see why $\pi(m)$ is $M_0$, let $j \colon L[U] \to L[F]$ and $k \colon L_\alpha[W'] \to L_\delta[F]$ be long enough iterations of $L[U]$ and $L_\alpha[W]$ respectively such that they become comparable. Then $\crit(j) = \kappa > \lambda$ and $\crit(k) = \gamma > \lambda$, so $j(M_0) = M_0$ and $k(\pi(m)) = \pi(m)$. By elementarity, both $M_0$ and $\pi(m)$ are now the $<_{L[F]}$-least model of $\phi$ among models whose domain is a cardinal, so $\pi(m) = M_0$ and $M$ is isomorphic to $M_0$.

Conversely, to see $M_0 \models \theta$ amounts to finding an appropriate premouse $(L_\alpha[W], \in, W)$. Let $\delta$ be a large enough cardinal such that $M_0, U \in L_\delta[U]$, and that $(L_\delta[U],\in,U)$ is an iterable premouse. Then let $N$ be the Skolem hull of $\lambda \cup \set{M_0}$ in $L_\delta[U]$ of cardinality $\lambda$, and let $\pi \colon (N,\in,U \cap N) \to (L_\alpha[W], \in, W)$ be the transitive collapse. Now $(L_\alpha[W],\in,W)$ is also an iterable premouse with $\abs{L_\alpha[W]} = \lambda$, $W$ is a $L_\alpha[W]$-ultrafilter on some $\gamma = \pi(\kappa) > \lambda$, and $\pi(M_0) = M_0$, so by elementarity $M_0$ is the $<_{L[W]}$-least model of $\phi$ as required. So $\theta$ is a sentence such that $M \models \theta$ if and only if $M$ is isomorphic to $M_0$, implying as before that $\phi$ is categorical.

Finally, observe that the case $\abs{M_0} = \kappa$ is impossible, since the measurable cardinal $\kappa$ is $\Pi^2_1$-indescribable \cite{hanfscott}. Thus $\phi$ is categorical.
\end{proof}

It turns out that finite axiomatizability is key for the preceding theorem. For every second order characterizable cardinal $\lambda > \kappa$, we produce a non-categorical recursively axiomatizable theory whose models have cardinality $\lambda$.

\begin{theorem} \label{ultrafilter-noncat}
Assume $V = L[U]$. Suppose $\kappa$ is measurable and $\lambda$ is second order characterizable with $\lambda > \kappa$. Then there is a recursively axiomatizable theory $T$ with $\kappa$ many non-isomorphic models of cardinality $\lambda$.
\end{theorem}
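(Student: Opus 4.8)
The plan is to realise the models as iterable premice of cardinality $\lambda$ that carry an internal measure whose critical point lies \emph{above} $\lambda$, obtained by iterating $U$, and to take $T$ to be their common (hence complete) second order theory.

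\emph{The models.} This mirrors the final case of the previous theorem, except that now $\kappa<\lambda$, so a measure sitting above $\lambda$ must be manufactured by iteration rather than found below. Let $j_{0,\eta}\colon L[U]\to \mathcal M_\eta=L[U_\eta]$ be the $\eta$-th iterated ultrapower, with $\kappa_\eta=j_{0,\eta}(\kappa)$, and restrict attention to $\eta$ large enough that $\kappa_\eta>\lambda$ (there are at least $\kappa$ such). For each such $\eta$ I would form, inside $\mathcal M_\eta$, the Skolem hull of $(\lambda+1)\cup\set{U_\eta}$ of cardinality $\lambda$ and collapse it to an iterable $L[U]$-premouse $\mathcal P_\eta=(L_{\alpha_\eta}[W_\eta],\in,W_\eta)$ of cardinality $\lambda$, in which $W_\eta$ is a measure on a critical point $\gamma_\eta$ with $\lambda<\gamma_\eta<\alpha_\eta$ (since $\lambda$ is contained in and fixed by the collapse while $\kappa_\eta>\lambda$). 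Distinct $\eta$ produce distinct collapsed ordinals $\gamma_\eta$, so the transitive structures $\mathcal P_\eta$ are pairwise distinct; as isomorphic transitive structures are equal, they are pairwise non-isomorphic. Letting $\eta$ range over a set of size $\kappa$ yields $\kappa$ pairwise non-isomorphic premice, each of cardinality $\lambda$.

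\emph{The theory and its recursiveness.} Fix one such $\mathcal P=\mathcal P_{\eta_0}$ and set $T=\Th_2(\mathcal P)$, which is complete by definition. In the vocabulary $\set{E,W}$, a recursive set of axioms $S\subseteq T$ would assert: $E$ is well-founded and extensional and its transitive collapse is an iterable $L[U]$-premouse $(L_\alpha[W'],\in,W')$ with $W'$ a measure on its critical point $\gamma$; that $\gamma$ exceeds the ambient cardinality; and the relativisation of a second order sentence $\phi_\lambda$ characterising $\lambda$, forcing the domain to have cardinality $\lambda$. Iterability is second order expressible by the remark preceding the previous theorem, and the remaining clauses are plainly second order expressible, so $S$ is recursive and every $\mathcal P_\eta$ is a model of $S$. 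Since, by the previous theorem, no finite complete second order theory whose models share a single cardinality can be non-categorical, these finitely many clauses cannot by themselves axiomatise the complete theory $T$; accordingly $S$ must be augmented by an infinite recursive scheme fixing the common first order theory of the $\mathcal P_\eta$, and verifying that this scheme is genuinely recursive is part of the work.

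\emph{Completeness, and the main obstacle.} It remains to see that every model of $S$ is second order equivalent to $\mathcal P$, equivalently that all the $\mathcal P_\eta$, and indeed all models of $S$, have the same second order theory. First order equivalence is comparatively routine: coiterating two iterable premice carrying a measure produces a common iterate, and the first order scheme pins this down. The crux is to promote this to \emph{second} order equivalence. Given a second order sentence $\psi$ and two models $\mathcal P,\mathcal Q$ of $S$, I would coiterate them to a common premouse $\mathcal R$ of cardinality $\lambda$; because the measures are concentrated on critical points above $\lambda$, the comparison maps fix the size-$\lambda$ combinatorial content, and the task is to show that the truth of $\psi$ is preserved along these maps, so that $\mathcal P\models\psi \iff \mathcal R\models\psi \iff \mathcal Q\models\psi$. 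Controlling second order truth, rather than merely first order truth, along the comparison maps is the principal difficulty, and the hypothesis $\crit(W')>\lambda$ is exactly what is needed to keep the power set of the size-$\lambda$ domain under control. Once this is in hand, $T$ is a complete, recursively axiomatised, non-categorical second order theory with $\kappa$ non-isomorphic models of cardinality $\lambda$.
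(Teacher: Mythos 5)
Your construction does not match the paper's and, more importantly, it has a gap at exactly the point you flag as ``the principal difficulty'': the premice $\mathcal P_\eta$ are \emph{not} pairwise second order equivalent, and no coiteration argument will make them so. Second order logic over a structure of cardinality $\lambda$ quantifies over all relations on the domain, hence sees every transitive set of cardinality $\le\lambda$ in $V$, in particular every iterable premouse of size $\lambda$ and every ordinal below $\lambda^+$. So a single second order sentence can assert, say, ``the critical point of my measure is the \emph{least} ordinal above $\lambda$ occurring as the critical point of an iterable premouse of cardinality $\lambda$''; this holds in exactly one $\mathcal P_\eta$ and fails in the rest. Comparison maps are elementary only for the first order language of the premice and give no control over quantification across $\powset(\lambda)^V$, so they cannot transfer second order truth. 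Your recursiveness step has a parallel problem: a recursive scheme ``fixing the common first order theory'' of the $\mathcal P_\eta$ is neither obviously recursive nor anywhere near strong enough to pin down a complete \emph{second} order theory.

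The paper's proof supplies the two mechanisms you are missing. First, it does not try to make \emph{all} members of a definable family equivalent; it takes the very simple family $M_\alpha=(\lambda+\alpha,<)$ for $\alpha<\kappa$ (these too are pairwise second order distinguishable), puts $T_0=\set{\phi: \set{\alpha<\kappa: M_\alpha\models\phi}\in U}$, and uses the countable completeness of $U$ to find a single set $X\in U$ of size $\kappa$ on which $\Th_2(M_\alpha)$ is constant --- the ultrafilter averages away the distinguishing sentences. Second, recursive axiomatizability comes from the scheme $T=\set{\phi\lequ\phi^+}$, where $\phi^+$ is a second order sentence in the empty vocabulary expressing ``$M_\alpha\models\phi$ for a $U$-large set of $\alpha$'' (possible because $U$ and the map $\alpha\mapsto M_\alpha$ are second order definable over any structure of size $\lambda>\kappa$). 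If you want to salvage your premouse models, you would need both ingredients: an ultrafilter on the index set to cut down to a second-order-indiscernible subfamily, and a second order definable presentation of that ultrafilter and of the indexed family inside a size-$\lambda$ structure to run the $\phi\lequ\phi^+$ trick; at that point you have reconstructed the paper's argument with unnecessarily complicated models.
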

\begin{proof}
For $\alpha < \kappa$ let $M_\alpha = (\lambda + \alpha, <)$, so in a structure of cardinality $\lambda$, $M_\alpha$ is straightforwardly definable from $\alpha$ (as $\lambda$ is second order characterizable). These models have the property that $M_\alpha \isom M_\beta$ implies $M_\alpha = M_\beta$.

For a second order sentence $\phi$ in vocabulary $(<)$, let
\[
S_\phi = \set{\alpha < \kappa : M_\alpha \models \phi},
\]
and let $T_0$ be the set of sentences $\phi$ such that $S_\phi \in U$. As $U$ is an ultrafilter, $T_0$ is a complete theory (so for any $\phi$, exactly one of $\phi \in T_0$ or $\lnot \phi \in T_0$ hold), and by the $\sigma$-completeness of $U$ the intersection $X = \bigcap \set{S_\phi : \phi \in T_0} \in U$ is nonempty. The set $X$ is such that for any $\alpha, \beta \in X$, the structures $M_\alpha$, $M_\beta$ have the same second order theory $T_0$, so it remains to see that the theory $T_0$ is recursively axiomatizable.

For a second order sentence $\phi$ in vocabulary $(<)$, let $E$ be a binary relation symbol and $u$ a first order variable, neither occurring in $\phi$, and let $\phi^+$ be the second order sentence 
\[
\exists E \exists u (\theta'(E,u) \land (\exists x \in u)(\forall \alpha \in x)"M_\alpha \models \phi")
\]
where $\theta'(E,u)$ says $E$ is well-founded and extensional, and its transitive collapse is a level of $L[u]$ containing $\lambda$ and having a normal measure $u$ as an element. Note that $\phi^+$ is a sentence in the empty vocabulary. Intuitively, $\phi^+$ states that $M_\alpha \models \phi$ for a $U$-big set of ordinals $\alpha < \kappa$, so for any structure $N$ with $\abs N = \lambda$ we have the equivalences
\begin{align*}
N \models \phi^+ &\iff \set{\alpha < \kappa : M_\alpha \models \phi} = S_\phi \in U \\
&\iff M_\alpha \models \phi \text{ for some } \alpha \in X \\
&\iff \phi \in T_0.
\end{align*}
The import of the vocabulary of $\phi^+$ being empty is that for a structure $N$, the truth of $N \models \phi^+$ depends only on $\abs N$, so we get that for all structures $N$ with $\abs N = \lambda$,
\[
N \models \phi^+ \iff M_\alpha \models \phi^+ \text{ for some } \alpha \in X \iff \phi^+ \in T_0
\]
so also $\phi \lequ \phi^+ \in T_0$ for all second order sentences $\phi$ in vocabulary $(<)$.

Now define the recursive set of sentences
\[
T = \set{\phi \lequ \phi^+ : \phi \text{ is a second order sentence in vocabulary } (<)}.
\]
Observe that any model $N$ of the theory $T$ has cardinality $\lambda$, since taking $\theta_\lambda$ to be the second order characterization of $\lambda$, we have $M_\alpha \models \theta_\lambda$ for all $\alpha < \kappa$, so $N \models \theta_\lambda^+$ and thus $N \models \theta_\lambda$ since $\theta_\lambda^+ \lequ \theta_\lambda \in T$.

To see $T$ axiomatizes $T_0$, suppose $N \models T$ so $\abs N = \lambda$, and that $\phi$ is a second order sentence in the vocabulary $(<)$, so either $\phi \in T_0$ or $\lnot \phi \in T_0$. In the former case we have $S_\phi \in U$ so $N \models \phi^+$, so $N \models \phi$, and in the latter case we have $S_{\lnot \phi} \in U$ so $N \models \lnot \phi$. Thus $\Th_2(N) = T_0$, so $T$ recursively axiomatizes $T_0$ as desired.
\end{proof}

In conclusion, all complete finitely axiomatizable theories are categorical in $L[U]$ as in $L$, and in $L[U]$ there are complete recursively axiomatizable second order theories that are non-categorical (whereas this is still unknown in $L$).

\section{Countable models}

We already remarked earlier that if $V=L$, then every complete finitely axiomatized 
second order theory is categorical \cite{Solo}.
We now show that for theories with a countable model this is a consequence of PD, and therefore a consequence of large cardinals:

\begin{theorem}\label{one}
Assume PD. Every complete finitely axiomatized 
second order theory with a countable model is categorical.
\end{theorem}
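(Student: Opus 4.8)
The plan is to mimic the categoricity arguments of the $L[U]$ case: I would produce a single second order sentence $\theta$ such that $M \models \theta$ if and only if $M$ is isomorphic to a canonically chosen model $M_0$ of $\phi$. Then completeness of $\phi$ forces $\theta$ into $\Th_2(M_0)$, hence into the theory of every model of $\phi$, yielding categoricity exactly as in Theorem~1. The only genuinely new ingredient is the method of singling out $M_0$: in place of a definable well-order of the universe (unavailable under PD) I would use a basis theorem for the projective sets.

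First I would reduce to the case that all models are countably infinite. The cardinality argument from Theorem~1 (relativising $\phi$ to a predicate $P$ whose extent is forced to be strictly smaller than the domain) shows that a complete $\phi$ has models in exactly one cardinality; since $\phi$ has a countable model, every model of $\phi$ is countable, and the finite case is immediate. Fixing a coding of countable structures in the vocabulary of $\phi$ by reals, let
\[
A_\phi = \set{x \in \R : x \text{ codes a model of } \phi}.
\]
Because $\phi$ is a single sentence, each of its second order quantifiers contributes one real quantifier while its first order quantifiers are absorbed as number quantifiers, so $A_\phi$ is lightface projective of a fixed complexity depending only on $\phi$, and it is nonempty since $\phi$ has a countable model.

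Now I would invoke PD. Under PD every level of the projective hierarchy has the scale property at the appropriate parity, and hence the corresponding basis theorem holds: a nonempty lightface $\Sigma^1_n$ set contains a member $x_0$ that is itself projectively definable (for instance the leftmost branch of a scale on $A_\phi$), with the predicate ``$x = x_0$'' projective. Let $M_0$ be the countable model of $\phi$ coded by $x_0$; it is definable from $\phi$ alone. Here finite axiomatizability is essential: for a merely recursive theory $T$ the set $A_T$ is a countable intersection of projective sets of unbounded complexity and need not be projective, so no basis theorem is available---entirely consistent with the non-categorical complete recursive theories produced elsewhere in the paper.

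Finally I would write the sentence $\theta$ asserting that the ambient structure $M$ is countably infinite and that, under any enumeration of its domain in order type $\omega$ (which second order logic can quantify over), the resulting real codes a structure isomorphic to the structure coded by the canonical real $x_0$. Since second order logic over a countably infinite structure interprets second order arithmetic, the projective predicate ``$x = x_0$'' and the isomorphism clause are expressible, so $\theta$ is a genuine second order sentence. Then $M_0 \models \phi \land \theta$, so $\theta \in \Th_2(M_0)$; by completeness $\theta \in \Th_2(M)$ for every $M \models \phi$, whence $M \isom M_0$ and $\phi$ is categorical. The step I expect to be the main obstacle is the faithful transfer of projective definability into second order logic over a countable model: verifying that ``$x = x_0$'' together with the isomorphism clause can be uniformly expressed once the domain is enumerated in type $\omega$, and that the particular choice of enumeration is immaterial.
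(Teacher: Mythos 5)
Your proposal is correct and follows essentially the same route as the paper: the paper also first notes all models are countable, takes the lightface $\Sigma^1_n$ set of reals coding models of $\phi$, extracts a definable element $r$ via the Projective Uniformization Theorem (your scale/basis-theorem formulation is the same tool), and then writes a second order sentence $\theta$ saying ``I am isomorphic to the model coded by $r$'' by interpreting second order arithmetic inside the countable structure, concluding by completeness. The final step you flag as the main obstacle is handled in the paper exactly as you anticipate, by conjuncts characterizing $(\N,+,\times)$, defining $r$, decoding the structure, and asserting the isomorphism.
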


\begin{proof}
Suppose $\phi$ is a complete second order sentence with a countable model. Then by completeness all models of $\phi$ are countable. Suppose $\phi$ is on the level $\Sigma^1_n$ of second order logic and its vocabulary is, for simplicity, just one binary predicate symbol $P$. Let $R$ be the $\Sigma^1_n$ (lightface) set of real numbers coding  models of $\phi$. By PD and its consequence, the Projective Uniformization Theorem \cite[Theorem 6C5]{MR2526093}, there is a $\Sigma^1_{n+1}$ (even $\Sigma^1_n$ if $n$ is even) element $r$  in $R$. Suppose $r$ codes the model $M$ of $\phi$.  We show that every model of $\phi$ is isomorphic to $M$. Suppose $N$ is a model of $\phi$.  Let $\theta$ be the following second order sentence:

$$\begin{array}{l}
\exists Q_+\exists Q_{\times}(\theta_1(Q_+,Q_{\times})\wedge\exists A
(\theta_2(Q_+,Q_{\times},A)\wedge\\
\exists B(\theta_3(Q_+,Q_{\times},A,B)\wedge
\exists F\theta_4(F,B)))),
\end{array}$$

\noindent where \begin{itemize}
\item $\theta_1(Q_+,Q_{\times})$ is the standard second order characterization of $(\N,+,\times)$.
\item $\theta_2(Q_+,Q_{\times},A)$ says that the set $A$ satisfies the $\Sigma^1_{n+1}$ definition of $r$ in terms of $Q_+$ and $Q_{\times}$. 
\item $\theta_3(Q_+,Q_{\times},A,B)$ says in a domain $N$ that $(N,B)$ is the binary structure coded by $A$ in terms of $Q_+$ and $Q_{\times}$.
\item $\theta_4(F,B)$ is the second order sentence which says that $F$ is a bijection and $$\forall x\forall y(P(x,y)\leftrightarrow B(F(x),F(y))).$$
\end{itemize}
Thus, $\theta$ essentially says ``I am isomorphic to the model coded by  $r$." Trivially, $M\models\theta$. Recall that $M\models\phi$. Since $\phi$ is complete, $\phi\models\theta$. Therefore our assumption  $N\models\phi$ implies $N\models\theta$ and therefore $N\cong M$. 
\end{proof}

We make a few remarks about the proof. First, if $n=2$, then we can use the Novikov-Kondo-Addison Uniformisation Theorem and PD is not needed. Thus we obtain:

\begin{corollary}
A complete $\Sigma^1_2$-sentence of second order logic with a countable model is always categorical.
\end{corollary}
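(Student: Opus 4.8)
The plan is to specialize the argument of Theorem~\ref{one} to the case $n=2$ and to replace the appeal to projective uniformization by the classical Novikov--Kondo--Addison uniformization theorem, which is provable in ZFC and so requires no determinacy hypothesis.

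First I would observe, exactly as in Theorem~\ref{one}, that completeness of $\phi$ forces every model of $\phi$ to be countable, and that the (lightface) set $R$ of reals coding models of $\phi$ is $\Sigma^1_2$ and nonempty. The only place PD entered the proof of Theorem~\ref{one} was in producing a projectively definable member $r \in R$; the rest of the argument (the construction of the sentence $\theta$ asserting ``I am isomorphic to the model coded by $r$'' and the conclusion $\phi \models \theta$) is purely second order and uses no determinacy.

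Next I would invoke the Novikov--Kondo--Addison theorem: since $R$ is a nonempty $\Sigma^1_2$ set, $\Sigma^1_2$-uniformization (equivalently, the $\Sigma^1_2$ basis theorem) yields a member $r \in R$ that is a $\Sigma^1_2$ singleton, hence $\Delta^1_2$. This matches the ``$\Sigma^1_n$ if $n$ is even'' bound of Theorem~\ref{one} at $n=2$, and crucially it is obtained in ZFC. Let $M$ be the model coded by $r$.

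Finally I would copy the construction of $\theta$ with $n=2$, the only change being that $\theta_2(Q_+,Q_\times,A)$ now expresses the $\Sigma^1_2$ definition of $r$ relative to the internally coded copy of $(\N,+,\times)$; this is expressible in second order logic precisely because $r$ is projectively, indeed $\Delta^1_2$, definable. Then $M \models \theta$ trivially, completeness gives $\phi \models \theta$, and hence any model $N \models \phi$ satisfies $N \cong M$, so $\phi$ is categorical. The only point requiring care is the complexity bookkeeping in the previous paragraph: one must verify that a nonempty $\Sigma^1_2$ set really does contain a lightface $\Delta^1_2$ real without any determinacy assumption, which is exactly the content of the Novikov--Kondo--Addison theorem.
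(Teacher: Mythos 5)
Your proposal is correct and is essentially identical to the paper's own argument: the authors likewise observe that the only role of PD in Theorem~\ref{one} is to produce a definable element of the $\Sigma^1_n$ set of codes, and that for $n=2$ the Novikov--Kondo--Addison uniformization (equivalently, the $\Sigma^1_2$ basis theorem in ZFC) supplies a $\Delta^1_2$ code $r$, after which the sentence $\theta$ and the completeness argument go through unchanged. Your remark that the resulting $r$ is a $\Sigma^1_2$ singleton, hence $\Delta^1_2$, matches the paper's ``even $\Sigma^1_n$ if $n$ is even'' bookkeeping at $n=2$.
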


In fact, the  categorical finite second order axiomatizations of structures such as $(\N,+,\times)$, $(\R,+,\times,0,1)$ and $(\C,+,\times, 0,1)$ (any many other classical structures) are all on the $\Pi^1_1$-level of second order logic.

Second, the above proof gives also the following more general result: Assume $Det(\bDelta^1_{2n})$. Suppose $T$ is a recursively axiomatized  theory on the $\Sigma^1_{2n+2}$-level of second order logic, which is complete for sentences on this level of second order logic. Then $T$ is categorical.

An essential ingredient of the proof of Theorem~\ref{one} was the assumption that the complete second order theory is finitely axiomatized. The following theorem shows that ``finitely" cannot be replaced by ``recursively".

\begin{theorem}\label{ones}
Assume PD. There is a recursively axiomatized  complete 
second order theory with $2^\omega$ non-isomorphic countable models.
\end{theorem}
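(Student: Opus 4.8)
The plan is to carry out the countable analogue of the construction in Theorem~\ref{ultrafilter-noncat}, replacing the normal measure $U$ on $\kappa$ by the Martin (cone) measure on Turing degrees, which under PD is a countably complete ultrafilter on the degree-invariant projective sets. For each real $x$ let $M_x = (\N, +, \times, x)$, with $x$ construed as a unary predicate. Distinct reals give non-isomorphic models, since $(\N,+,\times)$ is rigid and any isomorphism $M_x \isom M_y$ must fix the arithmetic and hence carry $x$ to $y$. For a second order sentence $\phi$ in the vocabulary $(+,\times,A)$ set $S_\phi = \set{x : M_x \models \phi}$, a projective set whose complexity is bounded in terms of $\phi$.

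The first key observation is that each $S_\phi$ is \emph{Turing-invariant}: if $x \equiv_T y$ then $y$ is arithmetically definable from $x$ and conversely, so $(\N,+,\times,x)$ and $(\N,+,\times,y)$ interpret one another and therefore $\Th_2(M_x) = \Th_2(M_y)$. Hence Martin's cone theorem --- available for all projective sets under PD --- applies, and exactly one of $S_\phi, S_{\lnot\phi}$ contains a cone (two cones always intersect). I would then define $T_0 = \set{\phi : S_\phi \text{ contains a cone}}$, which is a complete and consistent theory because the cone filter is an ultrafilter. By countable completeness of the cone measure (the join of the countably many base degrees witnessing the cones for $\phi \in T_0$ is itself a real, whose cone lies in every $S_\phi$ with $\phi \in T_0$) there is a cone $C$ with $\Th_2(M_x) = T_0$ for all $x \in C$. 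As $\abs{C} = 2^\omega$, this already yields $2^\omega$ pairwise non-isomorphic countable models of the complete theory $T_0$.

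It remains to axiomatize $T_0$ recursively, and here I would reuse the $\phi \lequ \phi^+$ device. For $\phi$ as above let
\[
\phi^+ := \exists Z\,\forall X\,(Z \le_T X \limp \phi[X/A]),
\]
the second order sentence asserting that $S_\phi$ contains a cone, where $\le_T$ is expressed arithmetically and $\phi[X/A]$ denotes $\phi$ with the predicate $A$ reinterpreted as the set variable $X$. The crucial point is that $\phi^+$ mentions only $(+,\times)$ and not $A$, and full second order semantics makes its set quantifiers range over all reals of $V$; hence for \emph{every} $x$ we have $M_x \models \phi^+$ iff $S_\phi$ contains a cone iff $\phi \in T_0$, independently of $x$. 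Letting $\theta_\N$ be the categorical second order characterization of $(\N,+,\times)$ (which forces the domain to be countable), I would take
\[
T = \set{\theta_\N} \cup \set{\phi \lequ \phi^+ : \phi \text{ a second order sentence in vocabulary } (+,\times,A)}.
\]
Since $\phi \mapsto \phi^+$ is a fixed computable syntactic operation, $T$ is recursive. Any $N \models T$ has standard arithmetic, so $N \models \phi \iff N \models \phi^+ \iff \phi \in T_0$, giving $\Th_2(N) = T_0$; thus $T$ is complete and axiomatizes $T_0$. Finally each $M_x$ with $x \in C$ satisfies $\theta_\N$ and has $\Th_2(M_x) = T_0$, so it models every $\phi \lequ \phi^+$ and hence all of $T$, yielding the required $2^\omega$ non-isomorphic countable models.

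The step I expect to be the main obstacle is establishing the Turing-invariance of the sets $S_\phi$ cleanly enough to invoke Martin's cone theorem; this is the exact analogue of ``$U$ is a measure'' in Theorem~\ref{ultrafilter-noncat}, and it is what lets the cone filter stand in for the normal measure. A secondary point requiring care is checking that $\phi^+$ can be written without reference to $A$, so that its truth value is uniform across all the $M_x$ --- which is precisely what forces the axiomatized theory to be complete.
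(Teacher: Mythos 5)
Your overall strategy --- replace the normal measure by the Martin cone measure, get a constant theory on a cone, and axiomatize it with the $\phi \leftrightarrow \phi^+$ device --- is exactly the paper's, and the $\phi^+$ you write down does correctly express ``$S_\phi$ contains a cone'' over standard arithmetic. But the step you yourself flag as the main obstacle is a genuine and fatal gap: for your choice of models $M_x=(\N,+,\times,x)$ the sets $S_\phi$ are \emph{not} Turing-invariant. Mutual interpretability of $M_x$ and $M_y$ for $x\equiv_T y$ gives a computable translation between $\Th_2(M_x)$ and $\Th_2(M_y)$; it does not make the two theories equal. Concretely, take $\phi$ to be the sentence $A(0)$. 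Then $S_\phi=\{x: 0\in x\}$, and every cone $\{x : z\le_T x\}$ contains both $x$ and $x\bigtriangleup\{0\}$, which are Turing-equivalent but disagree about $\phi$. So Martin's theorem does not apply, neither $S_\phi$ nor $S_{\lnot\phi}$ contains a cone, both $\phi^+$ and $(\lnot\phi)^+$ are false in every model with standard arithmetic, and your $T$ contains both $A(0)\leftrightarrow\phi^+$ and $\lnot A(0)\leftrightarrow(\lnot\phi)^+$ --- forcing any model to satisfy both $\lnot A(0)$ and $A(0)$. Your $T$ is inconsistent.

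The repair is precisely the point where the paper's proof differs from yours: one must choose structures that depend only on the Turing \emph{degree} of $x$, not on $x$ itself. The paper takes $M_x=(V_\omega\cup\{y\subseteq\omega : y\equiv_T x\},\in)$, so that $x\equiv_T y$ gives $M_x=M_y$ literally (hence equal theories, and degree-invariant sets to which Turing determinacy for projective sets applies), while $x\not\equiv_T y$ gives $M_x\ncong M_y$; the count of $2^\omega$ then comes from the continuum many Turing degrees in the cone $C$ rather than from the continuum many reals. With that substitution the rest of your argument (intersecting countably many cones, the recursive theory consisting of the sentences $\phi\leftrightarrow\phi^+$, and the completeness check) goes through essentially as you describe.
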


\begin{proof}
For any $x\subseteq\omega$ let  

$$M_x=(V_\omega\cup\{y\subseteq\omega : y\equiv_T x\},\in),$$ 

\noindent where $y\equiv_Tx$ means the Turing-equivalence of $y$ and $x$. We denote the second order theory of $M_x$ by $\Th_2(M_x)$. By construction, $x\equiv_Ty$ implies $\Th_2(M_x)=\Th_2(M_y)$. On the other hand, if $x\not\equiv_T y$, then clearly $M_x\ncong M_y$. If $\phi$ is a second order sentence, then `$M_x\models\phi$' is a projective property of $x$, closed under $\equiv_T$, and hence by Turing Determinacy for projective sets  \cite{MR0227022} has a constant truth value on a cone of reals $x$. By intersecting the cones we get a cone $C$ of reals $x$ on which $\Th_2(M_x)$ is constant. For any second order $\phi$ let $\phi^+$ be the second order sentence

$$``M_y\models\phi\mbox{ for a cone of $y$}"$$

\noindent and $\hat{\phi}$ the sentence $\phi\leftrightarrow\phi^+$. Let us consider the recursive second order theory $T$ consisting of $\hat{\phi}$, when $\phi$ ranges over second order sentences in the vocabulary of the structures $M_x$. We may immediately conclude that $T$ is complete, for if a second order sentence $\phi$ is given, then by the choice of $C$  either $M_x\models\phi$ for  $x\in C$ or $M_x\models\neg\phi$ for  $x\in C$. In the first case $\hat{\phi}\models\phi$ and in the second case $\hat{\phi}\models\neg\phi$. Therefore, $T\models\phi$ or $T\models\neg\phi$. There are a continuum of non Turing equivalent reals in the cone $C$. Hence there are a continuum of non-isomorphic $M_x$ with $x\in C$.
\end{proof}

\section{Models of cardinality $\aleph_1$}

Next, we show that the $(*)$ axiom (see Definition 4.33 in \cite{MR2723878})
has categoricity consequences for theories with a model of cardinality $\aleph_1$. Thus these consequences can also be derived from forcing axioms, namely MM$^{++}$ which implies the $(*)$ axiom (as shown in \cite{MMplusplusstar}).
The following theorem answers a question of Boban Veli\v{c}kovi\'{c}.
\begin{theorem}
Assume $(*)$. Then there is a complete finitely axiomatizable second order theory with $\omega_2 \,(=2^{\omega_1})$ non-isomorphic models of cardinality $\aleph_1$.
\end{theorem}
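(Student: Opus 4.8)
The plan is to build a family of $\aleph_1$-sized structures indexed by subsets of $\omega_2$ (or by elements of $2^{\omega_1}$) whose isomorphism type faithfully records the index, together with a single complete finitely axiomatizable sentence $\phi$ that pins down membership in this family. The guiding template is the $L[U]$ construction of Theorem~\ref{ultrafilter-noncat}, but where there the normal measure $U$ and the fine-structural well-order of $L[U]$ supplied both a canonical coding and a canonical selection mechanism, here the role of that machinery is played by the structure theory granted by $(*)$. Under $(*)$ one has $2^{\omega_1}=\omega_2$ and, crucially, a canonical $\Pmax$-style analysis of $H(\omega_2)$: the theory of $H(\omega_2)$ with parameters is sealed, and there is a definable (absolutely, robustly) well-ordering phenomenon for the relevant $\aleph_1$-sized objects. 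First I would isolate second order sentences $\psi_0,\ldots$ characterizing $\aleph_1$ and $\aleph_2=2^{\omega_1}$ as cardinals, so that inside a model of size $\aleph_1$ one can quantify over (codes for) all structures of size $\aleph_1$ and speak of their isomorphism types.

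The heart of the construction is to replace ``$M_\alpha=(\lambda+\alpha,<)$'' of Theorem~\ref{ultrafilter-noncat} with a sequence $\langle M_\xi : \xi < \omega_2\rangle$ of pairwise non-isomorphic structures of cardinality $\aleph_1$, each coded by a subset of $\omega_1$, such that the map $\xi \mapsto M_\xi$ is definable in $H(\omega_2)$ from a real or $\omega_1$-parameter that $(*)$ makes canonical. Second order logic over a domain of size $\aleph_1$ can express ``$m$ is the $<^{*}$-least code for a structure satisfying $\phi'$'', where $<^{*}$ is the $(*)$-definable well-order, exactly mirroring the roles of $\theta_{\mathrm{least}}$ and $\theta_{\mathrm{isom}}$ in the earlier proof. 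The sentence $\phi$ would then assert, of the ambient model $N$ (internally reconstructing $H(\omega_2)$ via a well-founded extensional relation $E$ whose collapse is correct), that $N$ is isomorphic to $M_\xi$ for the $\xi$ encoded by a chosen canonical parameter. The point is that $(*)$ guarantees this internal reconstruction is \emph{correct} and \emph{absolute} between the many models, so that all $\omega_2$ of the $M_\xi$ satisfy the \emph{same} complete second order theory while remaining pairwise non-isomorphic --- completeness because the $(*)$-theory of $H(\omega_2)$ decides each second order $\phi$ uniformly, non-categoricity because the indices $\xi$ differ.

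The main obstacle, and where the real content of $(*)$ enters, is establishing that the sentence can simultaneously be \emph{complete} and distinguish $\omega_2$-many types: we need that for each second order $\phi$ the truth value $M_\xi \models \phi$ stabilizes across a set of $\xi$ of full size (the analogue of the $U$-large set $X$ and the Turing cone $C$ in the earlier proofs), yet the structures on that large set remain non-isomorphic. In Theorem~\ref{ultrafilter-noncat} $\sigma$-completeness of $U$ delivered this stabilization; here the substitute must be the homogeneity of the $\Pmax$-extension and the fact that $(*)$ makes $\Th(H(\omega_2),\in,A : A \in \mathcal{P}(\omega_1))$ invariant under the relevant forcings, so that second order properties of the $M_\xi$ are governed by a single canonical theory rather than by $\xi$ itself. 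I would therefore argue that the set of $\xi < \omega_2$ on which every fixed $\phi$ has constant truth value is itself of size $\omega_2$ (indeed co-small in an appropriate club/stationary sense), intersect over all $\phi$ using the stability properties of $(*)$, and verify that $\aleph_1$-many distinct isomorphism types survive.

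Once stabilization is in hand, finite axiomatizability follows because the entire description --- reconstruct $H(\omega_2)$ via $E$, verify the $(*)$-defining schema on the collapse, locate the canonical parameter, and assert isomorphism of the ambient model to the selected $M_\xi$ --- is a single second order sentence, exactly as $\theta$ was finite in the $L[U]$ argument; completeness then forces every model of $\phi$ into the family, while the freedom in the parameter $\xi$ yields the $\omega_2$ non-isomorphic witnesses.
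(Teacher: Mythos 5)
There is a genuine gap here, on two fronts. First, the template you chose --- Theorem~\ref{ultrafilter-noncat} and its ``stabilize each $\phi$ on a large set of indices and intersect'' mechanism --- inherently produces a \emph{recursive} axiomatization, namely the infinite scheme $\{\phi \lequ \phi^+\}$; it cannot by itself deliver the \emph{finite} axiomatizability that this theorem asserts. For a finite axiomatization you need a single second order sentence whose models are exactly a concrete family of pairwise non-isomorphic $\aleph_1$-sized structures, and then a separate argument that the second order theory is constant on that family. Your proposal never exhibits such a family: the sequence $\langle M_\xi : \xi < \omega_2\rangle$ is left entirely unspecified, and the stabilization argument you sketch is exactly the one that forces you back into an infinite axiom scheme.

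Second, and more seriously, the selection mechanism you import from the $L[U]$ proof --- a ``$(*)$-definable well-order $<^*$'' together with a $\theta_{\mathrm{least}}$-style clause picking out a canonical code --- points in the wrong direction and is in fact incompatible with $(*)$. If a single sentence could assert ``I am isomorphic to the $<^*$-least $M_\xi$,'' completeness would make the theory \emph{categorical}, which is the opposite of what is wanted; and the relevant consequence of $(*)$ is precisely that no such definable selection exists. The paper's proof uses the \emph{quasihomogeneity of} $NS_{\omega_1}$ (Section 5.8 of \cite{MR2723878}): any set $X \subseteq \powset(\omega_1)$ that is ordinal definable from reals and $NS_{\omega_1}$ and closed under equality modulo $NS_{\omega_1}$ contains either every bistationary set or none. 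The concrete family is the $\omega_1$-like dense linear orders $\Phi(S)$ for bistationary $S \subseteq \omega_1$, for which $\Phi(S) \isom \Phi(S')$ iff $S \triangle S' \in NS_{\omega_1}$; quasihomogeneity applied to $X_\phi = \{S : \Phi(S) \models \phi\}$ shows all these models have the same complete second order theory, which is finitely axiomatized by the single sentence ``I am isomorphic to $\Phi(S)$ for some bistationary $S$,'' and there are $2^{\omega_1} = \omega_2$ classes modulo $NS_{\omega_1}$. So the essential missing idea is that $(*)$ enters through an \emph{anti}-definability (homogeneity) principle for a specific invariant family, not through a canonical well-order.
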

\begin{proof}
The pertinent consequence of $(*)$ is the quasihomogeneity of the nonstationary ideal on $\omega_1$ (see Section 5.8 in \cite{MR2723878}, particularly Definition 5.100). We take ``NS$_{\omega_1}$ is quasihomogeneous'' to be the following statement: if $X \subseteq \powset(\omega_1)$ is ordinal definable from parameters in $\R \cup \{$NS$_{\omega_1}\}$, and $X$ is closed under equality modulo NS$_{\omega_1}$, and $X$ contains one bistationary subset of $\omega_1$, then $X$ contains every bistationary subset of $\omega_1$.

We focus on the $\omega_1$-like dense linear orders $\Phi(S) = \eta + \sum_{\alpha < \omega_1} \eta_\alpha$, where
\[
\eta_\alpha = \begin{cases} \eta, & \alpha \notin S \\ 1+\eta, & \alpha \in S, \end{cases}
\]
$\eta$ is the order type of the rationals, and $S \subseteq \omega_1$ is bistationary. These models have the property that $\Phi(S) \isom \Phi(S')$ if and only if $S \triangle S' \in NS_{\omega_1}$. For a second order sentence $\phi$  in vocabulary $(<)$, the set
\[
X_\phi = \set{S \subseteq \omega_1 : S \text{ bistationary}, \Phi(S) \models \phi}
\]
is ordinal definable, and closed under equality modulo NS$_{\omega_1}$, so the quasihomogeneity of NS$_{\omega_1}$ implies that $X_\phi$ contains either every bistationary subset of $\omega_1$, or none of them.

This shows the models $\Phi(S)$ for bistationary $S \subseteq \omega_1$ all have the same complete second order theory, which is thus non-categorical. This theory is axiomatized by the second order sentence in vocabulary $(<)$ expressing ``I am isomorphic to $\Phi(S)$ for some bistationary $S \subseteq \omega_1$'', so it is finitely axiomatizable, as required.
\end{proof}

Some categoricity consequences of $(*)$ can already be derived from AD, the axiom of determinacy. As the axiom $(*)$ states that $L[\powset(\omega_1)]$ is a homogeneous forcing extension of a model of AD by a forcing that does not add reals, the categoricity consequences of AD for theories with a model of cardinality $\leq \aleph_1$ also follow from $(*)$. (Of course, the existence of recursively axiomatized non-categorical theories under $(*)$ is overshadowed by the existence of even finitely axiomatized such theories.)


\begin{theorem}
Assume AD. Then there is a complete recursively axiomatized second order theory with at least $2^{\aleph_0}$ many models of cardinality $\aleph_1$.
\end{theorem}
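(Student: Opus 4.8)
The plan is to adapt the Turing-cone argument of Theorem~\ref{ones} from countable models to models of cardinality $\aleph_1$, exploiting that under AD the Turing-cone filter is an ultrafilter on the degrees and is closed under countable intersections. To each real $x$ I would attach an $\aleph_1$-sized structure $N_x$ that simultaneously carries a fixed rigid skeleton of size $\aleph_1$ and codes the Turing degree of $x$. Concretely I would take
\[
N_x = \big(\omega_1 \sqcup (V_\omega \cup \{z : z \equiv_T x\}),\ <,\ \in,\ U\big),
\]
where the unary predicate $U$ marks the first part, on which $<$ is a well-order of type $\omega_1$, and $\in$ lives on the second part, which is exactly the structure $M_x$ of Theorem~\ref{ones}. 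Any isomorphism $N_x \isom N_y$ must fix the (rigid) copy of $(\omega_1,<)$ and restrict to an isomorphism $M_x \isom M_y$, so $N_x \isom N_y$ forces $x \equiv_T y$; in particular distinct Turing degrees yield non-isomorphic models.

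For a second order sentence $\phi$ in the vocabulary of the $N_x$ I would then form the sentence $\phi^+$ asserting ``for a cone of reals $y$, $N_y \models \phi$''. The point is that $\phi^+$ can be written in the empty vocabulary as a single second order sentence whose truth value, evaluated in any structure of cardinality $\aleph_1$, equals the real-world truth value of that statement, independently of the ambient structure. This rests on three facts expressible in full second order logic: the standard model $(\N,+,\times)$ is second order characterizable, so one can internally pin down the genuine reals as subsets of $\N$ together with the genuine relation $\geq_T$; the map $y \mapsto N_y$ is uniformly definable from $y$; and, because the ambient model has size exactly $\aleph_1 = \abs{N_y}$, one can biject its domain with that of $N_y$, represent $N_y$ by relations on the ambient domain, and evaluate the fixed sentence $\phi$ on it faithfully (the second order quantifiers of the ambient model range over the entire power set of its domain, hence over the full power set of $N_y$). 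I would set $\hat\phi = \phi \lequ \phi^+$, adjoin the sentence $\hat\psi$ where $\psi$ is the second order characterization of $\aleph_1$ (forcing all models to have cardinality exactly $\aleph_1$, which is what makes the internal evaluation of the $\aleph_1$-sized $N_y$ faithful), and let $T$ be the resulting recursive set of sentences.

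Completeness and the model count should then follow as in Theorem~\ref{ones}. Since there are only countably many sentences $\phi$, Turing determinacy under AD gives for each $\phi$ a cone deciding ``$N_x \models \phi$'', and using countable choice for reals (a consequence of AD) these countably many cones intersect in a single cone $C$ on which $\Th_2(N_x)$ is constant, equal to a complete theory $T_0$. For $x \in C$ we have $N_x \models \phi$ iff $\phi^+$ holds iff $N_x \models \phi^+$, so $N_x \models T$; and for an arbitrary $N \models T$, the adjoined sentence forces $\abs N = \aleph_1$, whence $N$ evaluates each $\phi^+$ to its (constant) real value and thus decides each $\phi$ in the same way, so $T$ is complete. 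Finally, a perfect set of pairwise Turing-incomparable reals lying above the vertex of $C$ supplies $2^{\aleph_0}$ pairwise Turing-inequivalent reals in $C$, and hence at least $2^{\aleph_0}$ pairwise non-isomorphic models $N_x$ of $T$.

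The main obstacle will be the careful verification that $\phi^+$ is a bona fide single second order sentence with genuinely model-independent truth value — specifically the internal, faithful evaluation of the second order theory of the external $\aleph_1$-sized structure $N_y$, which is exactly the place where the requirement $\abs N = \aleph_1$ is essential. The remaining ingredients (rigidity of the skeleton forcing $x \equiv_T y$, the cone algebra under AD, and the count of degrees in a cone) are routine and mirror the countable case.
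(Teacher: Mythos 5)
Your proof is correct, but it takes a genuinely different route from the paper's. The paper's argument is intrinsic to $\omega_1$: it invokes Martin's theorem that AD implies the strong partition relation $\omega_1 \to (\omega_1)^\omega$ with club homogeneous sets, intersects countably many homogeneous clubs to get $\omega_1 \to (\omega_1)^\omega_{2^\omega}$, and applies this to the coloring $X \mapsto \Th_2(\omega_1,<,X)$ for $X \in [\omega_1]^\omega$; the resulting club $H_0$ yields the pairwise non-isomorphic models $M_X$, $X \in [H_0]^\omega$, and $\phi^+$ is taken to say ``there is a club $C$ such that $M_X \models \phi$ for all $X \in [C]^\omega$.'' You instead transplant the Turing-cone construction of Theorem~\ref{ones} into an $\aleph_1$-sized shell. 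What your approach buys: it uses only Turing determinacy and $AC_\omega(\R)$, which follow immediately from AD, rather than Martin's strong partition theorem, and it makes transparent that the $\aleph_1$ case reduces to a padding of the countable case. What the paper's approach buys: its models are indexed by $[\omega_1]^\omega$ rather than by Turing degrees, which is exactly what lets the subsequent corollary under $(*)$ push the count up to $\omega_2 = \abs{[\omega_1]^\omega}$; your construction is capped at $2^{\aleph_0}$ models, which meets the stated bound but not that refinement. Two details worth a sentence each in a final write-up: controlling the cardinality of an arbitrary model of $T$ (your $\hat\psi$ handles domains of size $> \aleph_1$ because the internal evaluation of $\psi^+$ is faithful in any domain of size $\geq \aleph_1$, while in a countable domain the scheme $\phi \lequ \phi^+$ is outright contradictory since no internal representation of $N_y$ exists, so this does work); and for the count, one should join a perfect set of mutually generic reals with the vertex of $C$, obtaining continuum many pairwise Turing-inequivalent (not necessarily incomparable) reals inside the cone.
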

\begin{proof}
By Martin, AD implies  $\omega_1 \to (\omega_1)^\omega$, and moreover the homogeneous set given by $\omega_1 \to (\omega_1)^\omega$ can be taken to be a club (see \cite{MR0479903}). We may then intersect $\omega$ many homogeneous clubs for $\omega$ many colorings to obtain  $\omega_1 \to (\omega_1)^\omega_{2^\omega}$, and the homogeneous subset can still be taken to be a club.

We focus on models of the form $M_X = (\omega_1, <, X)$ for $X \in [\omega_1]^\omega$. The second order theory $\Th_2(M_X)$ in the vocabulary $(<,X)$ can be encoded by a real $f(X) \in 2^\omega$ consisting of the G\"{o}del numbers of the sentences true in $M_X$. This gives a coloring $f \colon [\omega_1]^\omega \to 2^\omega$, so we find a homogeneous club subset $H_0 \subseteq \omega_1$ such that $f(X)$ does not depend on $X \in [H_0]^\omega$. Hence the models $M_X$ with $X \in [H_0]^\omega$ all have the same complete second order theory $T_0$, which is thus non-categorical.

The theory $T_0$ is axiomatized by
\[
T = \set{\phi \lequ \phi^+ : \phi \text{ is a second order sentence}}
\]
where for a given second order sentence $\phi$ in vocabulary $(<,X)$, the sentence $\phi^+$ expresses ``there exists a club $C \subseteq \omega_1$ such that $M_X \models \phi$ for all $X \in [C]^\omega$''.

For a given second order sentence $\phi$, if $M_X \models \phi$ for each $X \in [H_0]^\omega$, then $H_0$ serves to witness that $\phi^+$ holds, so $T \models \phi$. Conversely, if $\phi^+$ holds, there is a club $C$ such that $M_X \models \phi$ for every $X \in [C]^\omega$, and taking $X \in [C \cap H_0]^\omega$ we see also that $M_X \models \phi$ for all $X \in [H_0]^\omega$. Thus $T \models \phi$ for exactly those $\phi$ such that $M_X \models \phi$ for all $X \in [H_0]^\omega$, so we see that $T$ is a recursive axiomatization of the theory $T_0$ as desired.
\end{proof}

The same can be analogously derived from the $(*)$ axiom, as follows:

\begin{corollary}
Assume $(*)$. Then there is a complete recursively axiomatized second order theory with $\omega_2$ many models of cardinality $\aleph_1$.
\end{corollary}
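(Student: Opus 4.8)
The plan is to run the proof of the preceding theorem inside the model $L[\powset(\omega_1)]$, exploiting the description of $(*)$ recalled above: $L[\powset(\omega_1)]$ is a homogeneous extension $L(\R)[G]$ of the AD model $L(\R)$ by the forcing $\Pmax$, which adds no reals. I would keep the structures $M_X = (\omega_1, <, X)$ for $X \in [\omega_1]^\omega$, now interpreting $\Th_2(M_X)$ in $L[\powset(\omega_1)]$. Two preliminary facts are needed. First, $\Pmax$ preserves $\omega_1$, since collapsing $\omega_1$ would add a real. Second, $[\omega_1]^\omega$ is the same in $L(\R)$ and in $L[\powset(\omega_1)]$: any $X \in [\omega_1]^\omega$ has countable supremum $\gamma$, and fixing a bijection $\omega \to \gamma$ in $L(\R)$ identifies $X$ with a real, so $X \in L(\R)$ because no reals are added. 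The same coding shows that every club of $L(\R)$ is still a club in $L[\powset(\omega_1)]$.

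The crucial step is to observe that the coloring $f(X) = \Th_2(M_X)$, as computed in $L[\powset(\omega_1)]$, already lies in $L(\R)$. For $X \in [\omega_1]^\omega$ and a second order sentence $\phi$, the assertion ``$M_X \models \phi$'' is a statement of $L(\R)[G]$ whose only parameters, $X$ and $\omega_1$, lie in $L(\R)$; by the homogeneity of $\Pmax$ its truth value is independent of $G$ and is decided by $1_{\Pmax}$. Hence $f(X) = \set{\phi : 1_{\Pmax} \force \text{``}M_X \models \phi\text{''}}$ is definable over $L(\R)$, so $f$ is a coloring $[\omega_1]^\omega \to 2^\omega$ belonging to $L(\R)$. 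I would then apply, inside $L(\R)$, Martin's relation $\omega_1 \to (\omega_1)^\omega$ with club homogeneous sets, boosted to $2^\omega$ colours by intersecting countably many homogeneous clubs exactly as before, obtaining a club $H_0 \in L(\R)$ on which $f$ is constant. Since $H_0$ remains a club in $L[\powset(\omega_1)]$ and $f$ records the genuine second order theory there, all the models $M_X$ with $X \in [H_0]^\omega$ have one and the same complete theory $T_0$, which is therefore non-categorical.

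From this point the argument is identical to the AD case. The theory $T_0$ is recursively axiomatized by $T = \set{\phi \lequ \phi^+ : \phi \text{ a second order sentence}}$, where $\phi^+$ expresses ``there is a club $C \subseteq \omega_1$ with $M_X \models \phi$ for every $X \in [C]^\omega$''; one checks that $N \models T$ implies $\Th_2(N) = T_0$, using $H_0$ as a witnessing club in one direction, and in the other intersecting any witnessing club $C \in L[\powset(\omega_1)]$ with $H_0$ and invoking the $f$-homogeneity of $H_0$. Finally, since $M_X \isom M_Y$ forces $X = Y$, the models $M_X$ for $X \in [H_0]^\omega$ are pairwise non-isomorphic, and there are $\abs{[H_0]^\omega} = \aleph_1^{\aleph_0} = \omega_2$ of them (using $2^{\aleph_0} = \aleph_2$ in the $\Pmax$ extension), all of cardinality $\aleph_1$.

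I expect the main obstacle to be the crucial step, namely verifying $f \in L(\R)$. This is precisely where the two features of $(*)$ must be combined: the homogeneity of $\Pmax$ makes $\Th_2(M_X)$ independent of the generic and hence definable over $L(\R)$ via the forcing relation, while the fact that $\Pmax$ adds no reals keeps the domain $[\omega_1]^\omega$ (and the codomain $2^\omega$) of $f$ unchanged, so that $f$ is a set coloring of the ground model. Once $f$ is known to live in $L(\R)$ the partition-theoretic content is inherited directly from AD, and the only remaining delicacy --- that the existential quantifier over clubs in $\phi^+$ ranges over the possibly larger stock of clubs of $L[\powset(\omega_1)]$ --- is absorbed by intersecting with $H_0$, exactly as in the AD proof.
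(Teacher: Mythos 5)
Your proposal is correct and follows essentially the same route as the paper: both define the coloring $f(X)$ in $L(\R)$ via the forcing relation, using the homogeneity of $\Pmax$ to make $\Th_2(M_X)$ independent of the generic and the fact that $\Pmax$ adds no reals to keep $[\omega_1]^\omega$ fixed, then apply the AD partition relation inside $L(\R)$ and step back out. The only detail the paper makes explicit that you leave implicit is that the second order theory of a structure of size $\aleph_1$ computed in $L[\powset(\omega_1)]$ agrees with that computed in $V$, which holds because the two models share $\powset(\omega_1)$.
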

\begin{proof}
Recall $(*)$ states that $L[\powset(\omega_1)] = L(\R)^{\Pmax}$ and AD holds in $L(\R)$. As $\Pmax$ is homogeneous and does not add reals under AD (see Lemmas 4.40 and 4.43 in \cite{MR2723878}), $\omega_1 = \omega_1^{L(\R)}$ and $[\omega_1]^\omega = ([\omega_1]^\omega)^{L(\R)}$.

We again look at models $M_X = (\omega_1, <, X)$ for $X \in [\omega_1]^\omega$, and working in $L(\R)$, define a coloring $f \colon [\omega_1]^\omega \to 2^\omega$ by
\[
f(X) = r \quad \iff \quad L(\R) \models \Pmax \force "\check r \text{ codes } \Th_2(M_{\check X})".
\]
That $f$ is a well-defined total function relies on the homogeneity of $\Pmax$.
By AD$^{L(\R)}$ we find a club $H_0 \in L(\R)$, $H_0 \subseteq \omega_1$ homogeneous for $f$.
Stepping out of $L(\R)$, we see that the models $M_X$, $X \in [H_0]^\omega$ all have the same complete second order theory $T_0$ (in $L(\R)^\Pmax = L[\powset(\omega_1)]$ and in $V$ both).

Working now in $V$, we again define
\[
T = \set{\phi \lequ \phi^+ : \phi \text{ is a second order sentence}}
\]
where for a given second order sentence $\phi$, the sentence $\phi^+$ expresses ``there exists a club $C \subseteq \omega_1$
such that $M_X \models \phi$ for all $X \in [C]^{\omega}$''.
The proof concludes analogously to the preceding theorem.

We note that $(*)$ calculates $\abs{\omega_1^\omega}$ to be $\omega_2$, so $T_0$ has $\omega_2$ many non-isomorphic models as claimed.
\end{proof}

Of course, we may also use the fact that the club filter on $\omega_1$ is an ultrafilter under AD to get another complete recursively axiomatized non-categorical second order theory, the difference being that this theory has $\omega_1$ many models instead. The proof, analogous to the proof of Theorem \ref{ultrafilter-noncat}, is omitted:

\begin{theorem}
Assume AD. Then there is a complete recursively axiomatized second order theory with $\omega_1$ many models of cardinality $\aleph_1$. \qed
\end{theorem}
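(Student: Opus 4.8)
The plan is to mirror the proof of Theorem~\ref{ultrafilter-noncat}, replacing the normal measure $U$ on the measurable cardinal $\kappa$ by the closed unbounded filter $\mathcal{C}$ on $\omega_1$, which under AD is a $\sigma$-complete ultrafilter (as already invoked in the remark preceding the statement). Concretely, for $\alpha < \omega_1$ I would set $M_\alpha = (\omega_1 + \alpha, <)$. Each $M_\alpha$ has cardinality $\aleph_1$, and $M_\alpha \isom M_\beta$ forces $\alpha = \beta$, since the order types $\omega_1 + \alpha$ are pairwise distinct. Because $\aleph_1$ is second order characterizable, inside any structure of cardinality $\aleph_1$ one can second-order define $\omega_1$ and, uniformly in an ordinal parameter $\alpha < \omega_1$, the structure $M_\alpha$ together with the relation ``$M_\alpha \models \phi$'', exactly as $\lambda$ and $M_\alpha$ were definable in the $\lambda < \kappa$ case of Theorem~\ref{ultrafilter-noncat}.

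Next, for a second order sentence $\phi$ in vocabulary $(<)$ I would put $S_\phi = \set{\alpha < \omega_1 : M_\alpha \models \phi}$ and let $T_0 = \set{\phi : S_\phi \in \mathcal{C}}$. Since $\mathcal{C}$ is an ultrafilter, $T_0$ is complete; and since there are only countably many second order sentences and $\mathcal{C}$ is $\sigma$-complete, the intersection $X = \bigcap \set{S_\phi : \phi \in T_0}$ lies in $\mathcal{C}$, hence contains a club and so has cardinality $\omega_1$. Every $M_\alpha$ with $\alpha \in X$ then has $\Th_2(M_\alpha) = T_0$, which produces $\omega_1$ pairwise non-isomorphic models of the (thus non-categorical) complete theory $T_0$.

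It remains to recursively axiomatize $T_0$, and here the AD setting is in fact cleaner than the $L[U]$ setting: because $\mathcal{C}$ is precisely the club filter, ``$S_\phi \in \mathcal{C}$'' is equivalent to ``$S_\phi$ contains a club'', which is directly second-order expressible. So I would let $\phi^+$ be the empty-vocabulary sentence asserting ``there is a club $C \subseteq \omega_1$ with $M_\alpha \models \phi$ for all $\alpha \in C$'', in the spirit of the two preceding determinacy results rather than the premouse coding of Theorem~\ref{ultrafilter-noncat}. For any $N$ with $\abs N = \aleph_1$ one then checks $N \models \phi^+ \iff S_\phi \in \mathcal{C} \iff \phi \in T_0$; since $\phi^+$ has empty vocabulary its truth depends only on cardinality, yielding $\phi \lequ \phi^+ \in T_0$ for every $\phi$. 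Setting $T = \set{\phi \lequ \phi^+ : \phi \text{ a second order sentence in vocabulary } (<)}$, the same argument as in Theorem~\ref{ultrafilter-noncat} (first pinning $\abs N = \aleph_1$ via the characterization $\theta_{\omega_1}$ of $\aleph_1$, then deciding each $\phi$ through $\phi^+$) shows $T$ recursively axiomatizes $T_0$.

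The main obstacle I anticipate is not the ultrafilter combinatorics but the bookkeeping behind ``$M_\alpha$ is definable from $\alpha$'': one must verify carefully that, uniformly for all $\alpha < \omega_1$, the structure $M_\alpha$ and the satisfaction predicate ``$M_\alpha \models \phi$'' can be rendered as genuine second-order formulas interpretable inside an arbitrary structure of cardinality $\aleph_1$, which is exactly where the second order characterizability of $\aleph_1$ is used. Once this uniform definability and the coincidence of ``$\in \mathcal{C}$'' with ``contains a club'' are in hand, the remainder is a routine transcription of the proof of Theorem~\ref{ultrafilter-noncat}, with $\omega_1$ in place of $\kappa$.
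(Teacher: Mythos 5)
Your proposal is correct and is essentially the proof the paper intends: the paper omits the argument, describing it only as analogous to Theorem~\ref{ultrafilter-noncat} with the club filter on $\omega_1$ (an ultrafilter under AD) in place of the normal measure, which is exactly what you carry out. Your observation that ``$S_\phi$ contains a club'' is directly second-order expressible, so that no premouse-style coding is needed for $\phi^+$, is the right simplification and matches how $\phi^+$ is handled in the two preceding AD/$(*)$ results.
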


This proof is also easily modified to assume $(*)$ instead:
\begin{corollary}
Assume $(*)$. Then there is a complete recursively axiomatized second order theory with $\omega_1$ many models of cardinality $\aleph_1$. \qed
\end{corollary}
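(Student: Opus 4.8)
The plan is to combine the club-ultrafilter argument behind the preceding (pure AD) theorem with the $\Pmax$-reflection used in the preceding corollary. Recall that under $(*)$ we have $L[\powset(\omega_1)] = L(\R)^{\Pmax}$ with AD holding in $L(\R)$, that $\Pmax$ is homogeneous and adds no reals, and that $\omega_1 = \omega_1^{L(\R)}$ is preserved. The first fact I would record is that, since $\Pmax$ adds no reals, it adds no new cofinal $\omega$-sequences into countable limit ordinals, so every club of $\omega_1$ lying in $L(\R)$ remains a club in $V$; consequently any two $V$-clubs still meet, and a set containing an $L(\R)$-club contains a genuine $V$-club. The relevant instance of AD is Solovay's theorem that the club filter on $\omega_1$ is a ($\sigma$-complete) ultrafilter, which therefore holds inside $L(\R)$.

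For the models I would take $M_\alpha = (\omega_1 + \alpha, <)$ for $\alpha < \omega_1$; each has cardinality $\aleph_1$, and distinct ordinals are non-isomorphic, so the $M_\alpha$ are pairwise non-isomorphic. Since $\omega_1$ is second order characterizable, inside any structure of cardinality $\aleph_1$ the order $M_\alpha$ is definable from the point $\alpha$. Working in $L(\R)$, for each second order sentence $\phi$ in the vocabulary $(<)$ I would set
\[
S_\phi = \set{\alpha < \omega_1 : L(\R) \models \Pmax \force "M_{\check\alpha} \models \phi"}.
\]
By homogeneity $\Pmax$ decides the bracketed statement, its value being the $V$-truth, so $S_\phi = \set{\alpha < \omega_1 : M_\alpha \models \phi}^V$ and $S_\phi \in L(\R)$. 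As the club filter is an ultrafilter in $L(\R)$, exactly one of $S_\phi$, $\omega_1 \setminus S_\phi$ contains an $L(\R)$-club; let $T_0 = \set{\phi : S_\phi \text{ contains an } L(\R)\text{-club}}$, a complete theory. Because there are only countably many sentences, $\sigma$-completeness of the club filter in $L(\R)$ yields a single $L(\R)$-club $H \subseteq \bigcap\set{S_\phi : \phi \in T_0}$, which by the opening fact is a $V$-club of size $\aleph_1$. Stepping out into $V$, every $M_\alpha$ with $\alpha \in H$ satisfies $\Th_2(M_\alpha) = T_0$, so $T_0$ is a complete, non-categorical theory with $\omega_1$ many non-isomorphic models.

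It remains to recursively axiomatize $T_0$ in $V$. As in the previous proofs I would let $\phi^+$ be the second order sentence in the empty vocabulary asserting ``$\set{\alpha < \omega_1 : M_\alpha \models \phi}$ contains a club $C \subseteq \omega_1$'' (using the second order characterization of $\omega_1$ to reconstruct the $M_\alpha$ internally), and put $T = \set{\phi \lequ \phi^+ : \phi \text{ a second order sentence in vocabulary } (<)}$, a recursive set. The key equivalence to verify is that in $V$, $\phi^+ \iff \phi \in T_0$: if $\phi \in T_0$ then $S_\phi$ contains the $V$-club $H$, so $\phi^+$ holds; if $\phi \notin T_0$ then $\omega_1 \setminus S_\phi$ contains an $L(\R)$-club, hence a $V$-club disjoint from $S_\phi$, and since any two $V$-clubs meet, $S_\phi$ contains no $V$-club and $\phi^+$ fails. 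Finally, taking $\theta_{\omega_1}$ to be the second order characterization of $\aleph_1$, each $M_\alpha \models \theta_{\omega_1}$ gives $\theta_{\omega_1} \in T_0$, so any $N \models T$ satisfies $\theta_{\omega_1}^+$ and hence $\theta_{\omega_1}$, forcing $\abs N = \aleph_1$; the displayed equivalence then shows $\Th_2(N) = T_0$, so $T$ recursively axiomatizes $T_0$.

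The main obstacle is the bookkeeping between the two universes: the ultrafilter property, used to define and complete $T_0$, must only ever be invoked in $L(\R)$, where AD holds, whereas the final theory $T$ and its defining sentence $\phi^+$ refer to $V$-clubs and $V$-truth. The bridge is exactly the claim that $\Pmax$ preserves clubs of $\omega_1$ and that any two $V$-clubs intersect; this is what forces ``$S_\phi$ contains a $V$-club'' to agree with ``$\phi \in T_0$'' even though $V$ contains many clubs, and many subsets of $\omega_1$, absent from $L(\R)$. Once this is pinned down the argument is a routine merge of Theorem \ref{ultrafilter-noncat} with the $\Pmax$-reflection of the preceding corollary, and the final count of $\omega_1$ many models is precisely the size of the homogeneous club $H$.
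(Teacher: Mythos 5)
Your proposal is correct and follows exactly the route the paper intends for this corollary (which it leaves as an omitted ``easy modification''): the models $M_\alpha=(\omega_1+\alpha,<)$ and the club-filter-as-ultrafilter argument of Theorem~\ref{ultrafilter-noncat}, transported to $(*)$ by defining $S_\phi$ inside $L(\R)$ via the homogeneous $\Pmax$ forcing relation and then stepping out to $V$, just as in the preceding corollary. Your explicit attention to why an $L(\R)$-club remains a $V$-club and why ``contains a club'' computed in $V$ agrees with membership in $T_0$ is precisely the bookkeeping the paper suppresses.
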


Thus, under $(*)$, a complete non-categorical theory with a model of cardinality $\aleph_1$ may have either $\omega_1$ or $\omega_2$ many non-isomorphic models.

\section{Forcing non-categoricity}

We shall show (Theorem~\ref{aleph1}) that we can force, over any model of set theory, a finite complete non-categorical second order theory with a model of cardinality $\aleph_1$. This shows that large cardinals cannot imply the categoricity of finite complete second order theories in general and, in particular, in the case that the theory has a model of cardinality $\aleph_1$. This is in  contrast to finite complete second order theories with a countable model where PD implies  categoricity (Theorem~\ref{one}).

Here is an outline of the proof. We start with a preparatory countably closed forcing $\bP$ obtaining a generic extension $V[G]$. Then we add $\aleph_1$ Cohen-reals obtaining a further generic extension $V[G][H]$. In this model we consider for every $x\subseteq\omega$ the model
\begin{equation}\label{HC}
M_x=(HC^{V[x]},HC^V,\in).
\end{equation}
We  show that if $x$ is Cohen-generic over $V[G]$, then the complete second order theory of $M_x$ is  finitely axiomatizable (in second order logic), and if $x$ and $y$ are mutually Cohen-generic over $V[G]$, then $M_x$ and $M_y$ are second order equivalent but non-isomorphic.

We begin by recalling the following \emph{fast club} forcing $\bP_\fc$, due to R. Jensen:
%
Conditions are pairs $p=(c_p,E_p)$ where $c_p$ is a countable closed subset of $\omega_1$ and  $C_p$ is club in $\omega_1$. We define $(c_p,E_p) \le (c_q,E_q)$ if $c_q$ is an initial segment of $c_p$,  $E_q \subseteq E_p$, and $c_p \setminus c_q \subseteq E_q$.
%
This forcing is countably closed. If we assume CH, this forcing has the $\aleph_2$-c.c. It is called {\it fast club forcing} because of the following property: Suppose $G$ is $\bP_{\fc}$-generic. If $C_G$ is the union of the sets $c_p$ such that $p\in G$, then the following holds: If $D$ is any club in $V$, then there is $\alpha$ such that $C_G\setminus\alpha\subseteq D$.
The set  $C_G$ is called a \emph{fast club} (over $V$).

Let $\bQ$ be the poset $\fn(\omega_1\times\omega,2,\omega)$ for adding $\aleph_1$ Cohen reals. We use fast club forcing to build a preparatory iterated forcing in such a way that after forcing with $\bQ$ the ground model reals are second order definable from any set $A\subseteq\omega_1$ with a certain second order property. The following lemma is crucial in the iteration:

\begin{lemma}\label{code357}Suppose $G\times H$ is $\bP_\fc\times\bQ$-generic over $V$. Suppose  $A\subseteq\omega_1$ is in 
$V[H]$ and  $D\subseteq C_G$ is a club in $V[G\times H]$ such that $V[G\times H]$ satisfies $\forall \alpha<\omega_1(D\cap\alpha\in L[A])$.
Then $\P(\omega)^V\subseteq L[A]$.
\end{lemma}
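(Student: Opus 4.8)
The plan is to fix an arbitrary real $a \in \P(\omega)^V$ and show $a \in L[A]$; since CH holds in $V$ we have $\abs{\P(\omega)^V} = \aleph_1$, but it will be cleaner to recover all $V$-reals at once. First I would record the preservation facts that keep everything coherent: $\bP_\fc$ is countably closed and $\bQ = \fn(\omega_1 \times \omega, 2, \omega)$ is ccc, so $\omega_1$ is computed the same in $V$, $V[H]$ and $V[G \times H]$, and $D$ is a genuine club of this common $\omega_1$. By mutual genericity $G$ is $\bP_\fc$-generic over $V[H]$, so the fast property holds relative to $V[H]$ as well: every club of $\omega_1$ lying in $V[H]$ contains a tail of $C_G$, and hence a tail of $D$. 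Since $A \in V[H]$, this applies in particular to all clubs built from $A$.

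Next I would fix the coding architecture. Enumerate $D = \set{d_\xi : \xi < \omega_1}$ increasingly and regard the intervals $[d_\xi, d_{\xi+1})$ as blocks, each carrying one bit read off locally from $D$ and $A$ (say from the parity or the least element of $A$ inside the block), so that a run of $\omega$ consecutive blocks spells a real. The essential point is that this decoding is internal to $L[A]$: to read the real spelled by the blocks below an ordinal $\alpha$ one needs only $D \cap \alpha$ and $A$, both of which are in $L[A]$ by hypothesis. Thus $L[A]$ can form the family of all reals spelled out along $D$, and the entire content of the lemma is that this family is exactly $\P(\omega)^V$.

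The correctness of the coding is where the genericity of $C_G$ does the work, and I would run it as a density argument over $V[H]$. For each $a \in \P(\omega)^V$ one fixes in $V$ a club $E_a$ prescribing where, and with which pattern, $a$ is to be coded; arranging that $a$ is assigned cofinally many runs of blocks, it suffices that the prescribed pattern appear along a tail. Here fastness ($D \subseteq^* E_a$) by itself is too weak, since a subclub of $C_G$ may thin out points and so does not have its gaps pinned down by a mere containment; instead I would exploit the flexibility of $\bP_\fc$-conditions, i.e. the density of the sets $D_\gamma$ and the resulting tree of coding decisions that $\bP_\fc$ supports, to force the generic to realise the prescribed pattern in the designated blocks while keeping every initial segment of the club inside $L[A]$. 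Combining this over all $a$ shows every $V$-real is spelled out, giving $\P(\omega)^V \subseteq L[A]$.

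I expect the main obstacle to be precisely this correctness step rather than the decoding. Concretely, one must show that no club $D \subseteq C_G$ with all initial segments in $L[A]$ can evade coding some $V$-real: if $a \in \P(\omega)^V$ were omitted, a condition forcing the hypotheses would, by the density of the sets $D_\gamma$, admit an extension deciding the blocks designated for $a$, and the decided values — being an initial segment of $D$, hence an element of $L[A]$ — would compute $a$, contradicting $a \notin L[A]$. The delicate part is making this density argument cohere with the ccc parameter $A$, so that the bits contributed by $A$ remain compatible with the decisions imposed on the club; this is exactly where the mutual genericity of the two factors $\bP_\fc$ and $\bQ$, which lets me treat $A$ as a fixed ground-model object while $C_G$ stays generic over $V[H]$, becomes indispensable.
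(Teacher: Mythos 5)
Your overall strategy --- code each $V$-real into the single generic club and decode it in $L[A]$ from $D$ and $A$ --- is not the paper's argument, and as sketched it has a genuine gap at exactly the point you flag as ``the correctness step''. The problem is twofold. First, a density argument only shows that for each $a\in\P(\omega)^V$ \emph{some} condition in $G$ lays down the pattern for $a$ \emph{somewhere}; it does not tell $L[A]$ where that pattern sits, and the decoder only has $D$ and $A$, not $C_G$ or the bookkeeping club $E_a$. Coherently coding all $\aleph_1$ many $V$-reals into one club in a way that is locatable and readable from an arbitrary subclub $D$ would require a fusion (an $\omega$-sequence of decisions) for each real, and you cannot arrange for $\aleph_1$ many prescribed fusion conditions to all lie in the one generic $G$ you are handed. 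Second, your closing contradiction argument conflates ``values decided by an extension $q$ of a condition'' with ``an initial segment of the actual $D$'': what a non-generic extension decides about $\tau$ is not an initial segment of $D$ and has no reason to be in $L[A]$. The hypothesis $D\cap\alpha\in L[A]$ is a fact about evaluations of $\tau$ under genuine generics, and to use it for a decided fragment you must realize the deciding condition in an actual generic over $V[H]$ and transfer the conclusion back by absoluteness.

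The paper's proof supplies precisely the idea your sketch is missing: instead of one generic, it uses \emph{three}. For a fixed real $a$, one builds three fusion sequences $(p_n),(q_n),(r_n)$ in $\bP_\fc$, all deciding $\dom(\dot f_{\delta_n})$ (so each forces $D$ to meet a designated interval of its own $c$-part and to miss the gaps), and interleaves the new blocks of $c_{p_{n+1}},c_{q_{n+1}},c_{r_{n+1}}$ in an order determined by whether $n\in a$. One then takes three generics $G_0,G_1,G_2$ over $V[H]$ containing $p_\omega,q_\omega,r_\omega$ respectively and sets $X=\tau_{G_0\times H}\cap\delta$, $Y=\tau_{G_1\times H}\cap\delta$, $Z=\tau_{G_2\times H}\cap\delta$. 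Since the hypothesis $\tau\cap\delta\in L[\dot A]$ is forced and $\dot A$ is a $\bQ$-name (so its value depends only on the fixed $H$, which is where your correct observation about mutual genericity actually earns its keep), each of $X,Y,Z$ lies in $L[A]$ by absoluteness; and the relative interleaving of $X\cup Y$ with $Z$ --- an ``interlace'' --- recovers $a$ inside $L[A]$. No global coding of all reals into one club is needed, and the arbitrariness of the subclub $D$ is harmless because only ``$D$ meets each designated block and avoids the gaps of $C_G$'' is used.
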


\begin{proof} We modify a construction from the proof of \cite[Lemma 4.33]{MR3632568} to our context.
Let us call a pair $(A,B)$ of sets of ordinals  an {\emph{interlace}}, if
 $A\cap B=\emptyset$, above every element of $A$ there is an element of $B$, and vice versa.
 Suppose we have disjoint sets $X, Y, Z\subseteq \omega_1$ such that $(X\cup Y,Z)$ is an interlace.
 Let $z\sim z'$ in $Z$ if $(z,z')\cap(X\cup Y)=\emptyset$.
 Let $[z_n]$, $n<\omega$, be the first $\omega$ $\sim$-equivalence classes in $Z$ in increasing order.
The triple $(X,Y,Z)$ is said to {\emph{code}} the set $a\subseteq\omega$ if for all $n<\omega$:

 $$n\in a\iff
\min\{\alpha\in X\cup Y:[z_n]<\alpha<[z_{n+1}]\}\in X.$$

It suffices to prove that for every $a\subseteq\omega$ in $V$ there is a triple $(X,Y,Z)\in L[A]$ such that $(X\cup Y,Z)$ is an interlace, and $(X,Y,Z)$ codes $a$. To this end, suppose $a\in\P(\omega)^V$.

Suppose $\dot{A}$ is a $\bQ$-name for $A$ in $V$, $\tau\in V$ is a $\bP_\fc$-name for a $\bQ$-name
  $\dot{D}$  for  $D$, and $\dot{F}$ a $\bQ$-name for a function $\omega_1\to\omega_1$ which lists the elements of $\dot{D}$ in increasing order. W.l.o.g. $\tau$ is a $\bP_\fc$-name $\langle \dot{f}_\alpha:\alpha<\omega_1\rangle$ for a sequence  of countable partial functions defined on $\omega_1$ such that $\{\dot{f}_\alpha(\gamma):\gamma\in \dom(f_\alpha)\}$ is a maximal antichain in $\bQ$ and $\dot{f}_\alpha(\gamma)$ forces $\dot{F}(\alpha)=\gamma$. 
  Suppose  (w.l.o.g.) the weakest condition in $\bP_\fc\times\bQ$ forces  what is assumed about $\dot{A}$, $\dot{F}$, $\tau$ and $\dot{D}$. Since $\bP_\fc\force``\bQ\force \dot{D}\subseteq C_{\dot{G}}"$, we have $\force\dom(\dot{f}_\alpha)\subseteq C_{\dot{G}}$. More generally, if $p\in\bP_\fc$ decides the countable set $\dom(\dot{f}_\alpha)$, then 
  \begin{equation}\label{included}
p\force \dom(\dot{f}_\alpha)\subseteq c_p\setminus\alpha.
\end{equation}

If $\delta<\omega_2$, let $W_\delta$ be the set of conditions $p\in\bP_\fc$ such that $p$ decides $\dom(\dot{f}_\delta)$. It is easy to see that $W_\delta$ is dense.

We construct descending $\omega$-sequences $(p_n), (q_n)$ and $(r_n)$ in $\bP_\fc$ as follows.
We let $p_0=q_0=r_0$ be the weakest condition in $\bP_\fc$. Suppose $p_n, q_n$ and $r_n$ have been defined already. Let $\delta_n=\max(c_{r_n}\cup{\{0\}})$. Now there are two cases:
\begin{enumerate}
\item Case $n\in a$:
\begin{enumerate}
\item Let $p_{n+1}\le p_n$ such that $\min(c_{p_{n+1}}\setminus c_{p_n})>\delta_n$ and $p_{n+1}\in W_{\delta_n}$. 
 \item Let $q_{n+1}\le q_n$ such that $\min(c_{q_{n+1}}\setminus c_{q_n})>\max(c_{p_{n+1}})$  and $q_{n+1}\in W_{\delta_n}$.
\item Let $r_{n+1}\le r_n$ such that $\min(c_{r_{n+1}}\setminus c_{r_n})>\max(c_{q_{n+1}})$ and $q_{n+1}\in W_{\delta_n}$.
\end{enumerate}
\item Case $n\notin a$:  
\begin{enumerate}
\item Let $q_{n+1}\le q_n$ such that $\min(c_{q_{n+1}}\setminus c_{q_n})>\delta_n$ and $q_{n+1}\in W_{\delta_n}$.
 \item Let $p_{n+1}\le p_n$ such that $\min(c_{p_{n+1}}\setminus c_{p_n})>\max(c_{q_{n+1}})$  and $p_{n+1}\in W_{\delta_n}$.
\item Let $r_{n+1}\le r_n$ such that $\min(c_{r_{n+1}}\setminus c_{r_n})>\max(c_{p_{n+1}})$ and $r_{n+1}\in W_{\delta_n}$.
\end{enumerate}
\end{enumerate}

Note that if $\delta_n<\alpha<\min(c_{p_{n+1}}\setminus c_{p_n})$, then $p_{n+1}\force \alpha\notin C_{\dot{G}}$, whence $p_{n+1}\force \alpha\notin\tau$. Respectively, if $\delta_n<\alpha<\min(c_{q_{n+1}}\setminus c_{q_n})$, then $q_{n+1}\force \alpha\notin C_{\dot{G}}$, whence $q_{n+1}\force \alpha\notin\tau$, and if $\delta_n<\alpha<\min(c_{r_{n+1}}\setminus c_{r_n})$, then $r_{n+1}\force \alpha\notin C_{\dot{G}}$, whence $r_{n+1}\force \alpha\notin\tau$. 

Similarly, if $\max(c_{p_{n+1}})<\alpha<\delta_{n+1}$, then $p_{n+2}\force \alpha\notin C_{\dot{G}}$, whence $p_{n+2}\force \alpha\notin\tau$. Respectively, if $\max(c_{q_{n+1}})<\alpha<\delta_{n+1}$, then $q_{n+2}\force \alpha\notin C_{\dot{G}}$, whence $q_{n+2}\force \alpha\notin\tau$.

Finally, if $\alpha\in I=[\min(c_{p_{n+1}}),\max(c_{p_{n+1}})]$, then $p_{n+1}$ may leave the sentence $\alpha\in\tau$ undecided, but still $p_{n+1}\force I\cap\tau\ne\emptyset$, since $p_{n+1}$ decides $\dom(\dot{f_{\delta_n}})$ and we have (\ref{included}). Respectively,  $q_{n+1}$ forces $[\min(c_{q_{n+1}}),\max(c_{q_{n+1}})]\cap\tau\ne\emptyset$, and $r_{n+1}$ forces $[\min(c_{r_{n+1}}),\max(c_{r_{n+1}})]\cap\tau\ne\emptyset$.

 Let $p_\omega=\inf_np_n, q_\omega=\inf_nq_n, r_\omega=\inf_nr_n$, and
 let $\delta=\sup\{\delta_n: n<\omega\}$.
 Let $G_0\subseteq\bP_\fc$ be generic over $V[H]$ such that $p_\omega\in G_0$, 
 $G_1\subseteq\bP_\fc$ generic over $V[H]$ such that $q_\omega\in G_1$, and 
  $G_2\subseteq\bP_\fc$  generic over $V[H]$ such that $r_\omega\in G_2$. 
 Lastly, let $$
X=\tau_{G_0\times H}\cap\delta, \ Y=\tau_{G_1\times H}\cap\delta, \  Z=\tau_{G_2\times H}\cap\delta.$$ 
As $\force_{\bP_\fc\times\bQ}\tau\cap\delta\in L[\dot{A}]$ and $\dot{A}_{G_0\times H}=\dot{A}_H$, we have $V[G_0\times H]\models X\in L[A]$. By absoluteness,  $V[H]\models X\in L[A]$. Similarly, $V[H]\models Y,Z\in L[A]$. 
 %


Now by construction, $(X\cup Y,Z)$ is an interlace and $(X,Y,Z)$ codes $a$. Hence $a\in L[A]$.
\medskip

\end{proof}

We need another auxiliary lemma for the iteration:

\begin{lemma}\label{closed}
 Assume 
 $G$ is $\bP_\fc$-generic over $V$,  $\bR\in V[G]$ is a $\sigma$-closed forcing,
 $K$ is $\bR$-generic over $V[G]$,
 $H$ is $\bQ$-generic over $V[G][K]$, 
 $A\subseteq\omega_1$ is in $V[H]$, and
 in $V[G][K][H]$, there
is a club $D \subseteq C_{G}$ such that $D\cap\alpha \in L[A]$ for all $\alpha < \omega_1$.
Then such a club $D$ must already exist in $V[G][H]$.
\end{lemma}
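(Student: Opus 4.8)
The plan is to eliminate the $\sigma$-closed generic $K$ by a fusion carried out entirely in $V[G]$, exploiting that the approximations $D\cap\alpha$ live in the Cohen extension $V[H]$ rather than in $V[G][K]$. Since $\bR,\bQ\in V[G]$ and $H$ is generic over $V[G][K]$, the filter $K\times H$ is $\bR\times\bQ$-generic over $V[G]$ and $V[G][K][H]=V[G][K\times H]$; I regard $\dot D$ as an $\bR\times\bQ$-name over $V[G]$ and fix $(r^*,q^*)\in K\times H$ forcing ``$\dot D\subseteq C_G$ is a club with $\dot D\cap\alpha\in L[\dot A]$ for all $\alpha$'', where $\dot A$ is a pure $\bQ$-name for $A$ (available because $A\in V[H]$ and $\bQ^V=\bQ^{V[G][K]}$, as $\omega_1$ is preserved throughout). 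The goal is to produce, working in $V[G]$, a \emph{pure} $\bQ$-name $\dot D'$ that $q^*$ forces to name such a club; then $D'=(\dot D')_H\in V[G][H]$ witnesses the lemma, since we only need \emph{some} such club, not $D$ itself.

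The key naming fact is that each slice $D\cap\alpha$ is captured in $V[G]$. Because $A\in V[H]$ we have $D\cap\alpha\in L[A]\subseteq V[H]$, so $D\cap\alpha$ is a subset of the \emph{countable} ordinal $\alpha$ lying in a $\bQ$-extension; as $\bQ$ is ccc it therefore has a nice $\bQ$-name, and every nice $\bQ$-name for a subset of a countable ordinal is itself a countable object (all its antichains are countable). Since $\bR$ is $\sigma$-closed over $V[G]$ it adds no new countable objects, so for every $r\le r^*$ and every $\alpha<\omega_1$ the set of $r'\le r$ for which there is a nice $\bQ$-name $c\in V[G]$ with $r'\force_{\bR}\bigl[\force_{\bQ}\dot D\cap\alpha=(c)\bigr]$ is dense below $r^*$. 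This density, provable in $V[G]$, is all the construction will use; in particular it does not refer to the actual $K$.

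With this in hand I would run a fusion of length $\omega_1$ in $V[G]$: build a descending sequence $\langle r_\xi:\xi<\omega_1\rangle$ in $\bR$ with $r_0\le r^*$, together with a strictly increasing continuous cofinal sequence $\langle\alpha_\xi\rangle$ and nice $\bQ$-names $c_\xi\in V[G]$, so that $r_\xi\force_{\bR}\bigl[\force_{\bQ}\dot D\cap\alpha_\xi=(c_\xi)\bigr]$ and, forcing $\alpha_\xi$ to be a limit point of $\dot D$, each $(c_\xi)$ is unbounded in $\alpha_\xi$. The decisive point is that this construction needs lower bounds \emph{only at limit stages $\lambda<\omega_1$}, each of which is a countable ordinal, so the $\sigma$-closure of $\bR$ suffices; no lower bound of the whole $\omega_1$-sequence is required, because at the end one reads off $\dot D'=\bigcup_\xi c_\xi$ as a name rather than a condition. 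For $\eta<\xi$ the condition $r_\xi\le r_\eta$ forces the pure $\bQ$-statement ``$q^*\force_{\bQ}(c_\eta)=(c_\xi)\cap\alpha_\eta$''; since this is a fact about the $V[G]$-objects $\bQ,c_\eta,c_\xi$ that an $\bR$-condition can force only if it already holds in $V[G]$, the $c_\xi$ cohere absolutely, $\dot D'$ is a genuine $\bQ$-name, and $q^*\force_{\bQ}$``$\dot D'$ is a club $\subseteq C_G$ with every initial segment in $L[\dot A]$''. As $q^*\in H$, the set $D'\in V[G][H]$ is the required club.

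The main obstacle is the interface between the $\sigma$-closed fusion and the ccc naming: one must verify that the nice $\bQ$-names for the countable sets $D\cap\alpha$ genuinely descend into $V[G]$ (this is exactly where $\sigma$-closure of $\bR$ and the ccc of $\bQ$ interact, essentially an instance of Easton's lemma) and that coherence transfers from ``forced by $r_\xi$ over $\bR$'' to ``forced by $q^*$ over $\bQ$'', so that $\dot D'$ is a single well-defined name. A secondary point is closure and unboundedness of $D'$, handled by taking $\langle\alpha_\xi\rangle$ continuous and cofinal and forcing each $\alpha_\xi$ to be a limit point of $\dot D$. Once these are checked the argument is complete; the conceptual heart is the observation that constructing a club is an $\omega_1$-length fusion whose limit stages are all countable, so $\sigma$-closure is precisely enough.
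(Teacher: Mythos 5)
Your argument is correct, but it follows a genuinely different route from the paper's. Both proofs reduce the $\bR$-name-for-a-$\bQ$-name $\dot D$ to a pure $\bQ$-name by an $\omega_1$-length descending sequence of $\bR$-conditions, each deciding a countable piece of the name (and both rest on the same two ingredients: a nice $\bQ$-name for a subset of a countable ordinal is a countable object, and the $\sigma$-closed $\bR$ adds no new countable sets, so such names descend to $V[G]$). The difference is where the sequence lives. The paper threads its sequence $(r_\alpha)_{\alpha<\omega_1}$ through the actual generic filter $K$, using that a generic filter for a $\sigma$-closed forcing is countably closed as a filter; the decided values are then the true ones, and the assembled name denotes the very same club $D$. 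You instead run the recursion entirely in $V[G]$ below a fixed $(r^*,q^*)\in K\times H$, needing only $\sigma$-closure for lower bounds at countable limit stages, and you accept that the resulting name $\dot D'$ may denote a \emph{different} club; your explicit remark that the lemma only asserts the existence of \emph{some} such club is exactly what licenses this. The trade-off: the paper's version is shorter and recovers $D$ itself, but its assembled name is built from a $K$-dependent $\omega_1$-sequence and so a priori lies in $V[G][K]$ rather than $V[G]$, leaving a final step to be supplied before the club can be placed in $V[G][H]$; your version puts $\dot D'$ in $V[G]$ outright, so $D'=(\dot D')_H\in V[G][H]$ is immediate, at the cost of the density and coherence verifications --- both of which you correctly reduce to the absoluteness, under $\sigma$-closed forcing, of statements about the fixed $V[G]$-objects $\bQ$, $c_\eta$, $c_\xi$. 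The only point to tidy in a written version is to carry the condition $q^*$ uniformly through the decisions ``$r'\force_{\bR}(q^*\force_{\bQ}\dot D\cap\check\alpha=c)$'', since it is only below $q^*$ that $\dot D$ is forced to be a club contained in $C_G$ with initial segments in $L[\dot A]$; this is bookkeeping, not a gap.
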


\begin{proof}
Suppose $\dot{A}\in V$ is a $\bQ$-name for $A$ and $\dot{D}\in V[G]$ is an $\bR$-name for a $\bQ$-name  
   for  $D$. Suppose  $\dot{F}\in V[G]$  is an $\bR$-name for a $\bQ$-name   for a function $\omega_1\to\omega_1$ listing the elements of $\dot{D}$ in increasing order. W.l.o.g. $\dot{D}$ is a $\bR$-name $\langle \dot{f}_\alpha:\alpha<\omega_1\rangle$ for a sequence  of countable partial functions defined on $\omega_1$ such that $\{\dot{f}_\alpha(\gamma):\gamma\in \dom(f_\alpha)\}$ is a maximal antichain in $\bQ$ and $\dot{f}_\alpha(\gamma)$ forces $\dot{F}(\alpha)=\gamma$. 
  Suppose  (w.l.o.g.) the weakest condition in $\bR\times\bQ$ forces  what is assumed about $\dot{A}$, $\dot{F}$, and $\dot{D}$. Since $\force \dot{D}\subseteq C_{\dot{G}}$, we have $\force\dom(\dot{f}_\alpha)\subseteq C_{\dot{G}}$. 

We shall define a descending sequence $(r_\alpha)_{\alpha<\omega_1}$ in $K$. For a start, $r_0\in K$ can be arbitrary. Suppose $r_\alpha\in K$ has been defined already. 
Let $r_{\alpha+1}\le r_\alpha$ such that $r_{\alpha+1}\in K$ and $r_{\alpha+1}$ decides
 $\dom(\dot{f}_{\beta})$ and $\dot{f}_\alpha(\gamma)$ for $\beta\le\alpha$ and $\gamma\in \dom(\dot{f}_\alpha)$. 
Let $g_\alpha\in V[G]$ such that $r_{\alpha+1}\force \dot{f}_\alpha=g_\alpha$. Let $\dot{S}$ be a $\bQ$-name in $V$ for a function $\omega_1\to\omega_1$
such that $g_\alpha(\gamma)\force\dot{S}(\alpha)=\gamma$. Let $\dot{E}\in V$ be a $\bQ$-name such that $\force\dot{E}=\{\dot{S}(\alpha):\alpha<\omega_1\}$.
Now 

$$V[K][H]\models\dot{E}_H=\dot{D}_{K\times H}\ \wedge\ \dot{E}_H\cap\delta\in L[A],$$

\noindent whence $V[H]\models \dot{E}_H\cap\delta\in L[A]$ follows by absoluteness.
\end{proof}

Now we can construct the iteration in such a way that after forcing with the iteration and then with $\bQ$ the ground model reals, which are the same as the reals after the iteration,  are second order definable from any set $A\subseteq\omega_1$ with a certain second order property.

\begin{lemma}\label{eww} We assume CH.
Suppose $\bP$ is the countable support iteration of fast club forcing of length $\omega_2$. Let $G$ be $\bP$-generic over $V$.
Suppose $H$ is $\bQ$-generic over $V[G]$. Suppose 
in
$V[G][H]$ there is a set $A\subseteq \omega_1$ such that  for every club $C$, there is a club $D \subseteq C$ such that $D\cap \alpha \in L[A]$ for all $\alpha < \omega_1$. Then $P(\omega)^V \subseteq L[A]$.
\end{lemma}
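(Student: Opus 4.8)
The plan is to find a stage $\xi<\omega_2$ of the iteration so that, setting $V'=V[G_\xi]$, the set $A$ lies in the \emph{product} Cohen extension $V'[H]$ while the forcing taking $V'$ up to $V[G]$ factors as fast club forcing followed by a $\sigma$-closed tail. This is exactly the configuration to which Lemma~\ref{closed} and then Lemma~\ref{code357} apply over the base $V'$. As a preliminary reduction I would note that $\bP$, being a countable support iteration of countably closed forcings, is itself countably closed and hence adds no reals; therefore $\P(\omega)^{V[G_\xi]}=\P(\omega)^V$ for every $\xi\le\omega_2$, and it suffices to prove $\P(\omega)^{V'}\subseteq L[A]$.

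The crux is the reflection step that produces $\xi$. Fix a nice $\bQ$-name $\dot{A}\in V[G]$ for $A$; as $A\subseteq\omega_1$ and $\bQ$ is c.c.c.\ of size $\aleph_1$ (using CH), $\dot{A}$ is coded by a subset of $\omega_1$ lying in $V[G]$. Using the $\aleph_2$-c.c.\ of the iteration $\bP$ (which holds under CH), together with the countable supports and the regularity of $\omega_2$, every subset of $\omega_1$ in $V[G]$ already appears in some $V[G_\xi]$ with $\xi<\omega_2$: a name for it meets $\le\aleph_1$ antichains, each of size $\le\aleph_1$ and with all supports bounded below $\omega_2$, so the union of these supports is bounded. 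In particular $\dot{A}\in V':=V[G_\xi]$, whence $A=\dot{A}_H\in V'[H]$ and $A$ depends on neither the fast club added at stage $\xi$ nor the tail generic. I expect establishing the $\aleph_2$-c.c.\ of the length-$\omega_2$ iteration to be the main obstacle, since this is precisely what licenses the bounded reflection.

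With $\xi$ fixed, I would factor $\bP\cong\bP_\xi*\dot{\bP}_\fc*\dot{\bR}$, where over $V'$ the stage-$\xi$ iterand is fast club forcing, with generic $g$ adding a fast club $C_g$, and the tail $\bR$ (stages above $\xi$) is $\sigma$-closed in $V'[g]$. Writing $V[G]=V'[g][K]$ with $K$ the $\bR$-generic, apply the hypothesis to the club $C_g$ to obtain, in $V[G][H]=V'[g][K][H]$, a club $D\subseteq C_g$ with $D\cap\alpha\in L[A]$ for all $\alpha<\omega_1$ (note $C_g$ remains a club, as $\omega_1$ is preserved throughout). Since $A\in V'[H]$, Lemma~\ref{closed} applied with base model $V'$ gives such a club $D$ already in $V'[g][H]$. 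Finally, $g$ is $\bP_\fc$-generic over $V'$ and $H$ is $\bQ$-generic over $V'[g]$, so $g\times H$ is $\bP_\fc\times\bQ$-generic over $V'$, and Lemma~\ref{code357} yields $\P(\omega)^{V'}\subseteq L[A]$. Combined with the preliminary reduction, $\P(\omega)^V\subseteq L[A]$, as required.
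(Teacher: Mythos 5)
Your proposal is correct and follows essentially the same route as the paper: choose a stage $\beta<\omega_2$ with the $\bQ$-name $\dot A$ already in $V'=V[G_{<\beta}]$, view $V[G]$ as $V'[G_\beta][K]$ with $K$ generic for a countably closed tail, apply Lemma~\ref{closed} over $V'$ to pull the club $D\subseteq C_{G_\beta}$ down to $V'[G_\beta][H]$, and then apply Lemma~\ref{code357} over $V'$. The only difference is that you spell out the reflection of $\dot A$ to a bounded stage (via the $\aleph_2$-c.c.) and the fact that the iteration adds no reals, both of which the paper leaves implicit.
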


\begin{proof}  Let $\bP=\langle \bP_\alpha:\alpha<\omega_2\rangle$ be the countable support  iteration of $\langle \dot{Q}_\alpha:\alpha<\omega_2\rangle$, where $\bP_\alpha\force ``\dot{Q}_\alpha\mbox{ is the fast club}$ $\mbox{forcing $\bP_{\fc}$"}$. Let $G_\alpha=G\cap \bP_\alpha$.
Let $\dot{A}\in V[G]$  be an $H$-name for $A$. Choose $\beta$ large enough such that 
$\dot{A} \in V[\langle G_{\alpha}: \alpha < \beta\rangle].$
Now  $G_{\beta}$ is $\bP_\fc$-generic over $V[\langle G_{\alpha}:\alpha < \beta\rangle]$. But, $V[G]$ is a generic extension of $V[\langle G_{\alpha}:\alpha < \beta\rangle][G_{\beta}]$ by countably closed forcing
and by assumption, in $V[G][H]$, there
is a club $D \subseteq C_{G_{\beta}}$ such that $D\cap\eta \in L[A]$ for all $\eta < \omega_1$.   We apply Lemma~\ref{closed} in $V[\langle G_{\alpha}:\alpha < \beta\rangle]$ and conclude that  there is a club 
$D \subseteq C_{G_{\beta}}$  in $V[\langle G_{\alpha}:\alpha < \beta \rangle][G_{\beta}][H]$ such that $D\cap\alpha \in L[A]$ for all $\alpha< \omega_1$. By  Lemma~\ref{code357},  $P(\omega)^V \subseteq L[A]$.
%
\end{proof}

\begin{theorem}\label{aleph1}
There is a set of forcing conditions that forces the existence of a complete non-categorical finite second order theory with a model of cardinality $\aleph_1$.
\end{theorem}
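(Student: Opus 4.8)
The plan is to carry out the outline preceding the statement. Force first (if necessary) with a countably closed forcing making CH hold, then with the preparation $\bP$ of Lemma~\ref{eww} (the countable support iteration of fast club forcing of length $\omega_2$), obtaining $V[G]$, and finally with $\bQ=\fn(\omega_1\times\omega,2,\omega)$ to add $\aleph_1$ Cohen reals, obtaining $V[G][H]$. Since a countable support iteration of countably closed forcings is again countably closed, $\bP$ adds no reals and no new hereditarily countable sets, so $HC^{V[G]}=HC^V$ and $\P(\omega)^{V[G]}=\P(\omega)^V$; and since CH holds in $V[G]$ and is preserved by one Cohen real, each $M_x=(HC^{V[G][x]},HC^{V[G]},\in)$ has cardinality $\aleph_1$. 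It then suffices to produce a single second order sentence $\phi$ with $M_x\models\phi$ (for $x$ Cohen over $V[G]$) such that every model of $\phi$ has the same second order theory as $M_x$, together with the fact that $M_x\not\isom M_y$ for mutually generic $x,y$: the theory $\Th_2(M_x)$ is then complete, finitely axiomatized by $\phi$, and non-categorical, which is what the forcing is required to produce.

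Non-isomorphism is immediate: an isomorphism between two transitive $\in$-structures is the identity (by $\in$-induction), so $M_x\isom M_y$ would force $HC^{V[G][x]}=HC^{V[G][y]}$, whereas for mutually generic $x,y$ we have $x\in HC^{V[G][x]}\sm HC^{V[G][y]}$. For second order equivalence I would invoke the weak homogeneity of Cohen forcing: identifying $x,y$ with two columns of the $\bQ$-generic, the automorphism of $\bQ$ swapping these columns fixes the canonical name for $HC^{V[G]}$ and interchanges the names for $M_x$ and $M_y$; since $\Th_2(M_x)$ (computed in $V[G][H]$) is thereby invariant under this automorphism, $\Th_2(M_x)=\Th_2(M_y)$. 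The same argument shows $\Th_2(M_x)$ does not depend on the choice of Cohen real $x$ at all.

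For the finite axiomatization the plan is to let $\phi$, in the vocabulary $(P,\in)$, assert: (i) $\in$ is well-founded (a $\Pi^1_1$ condition) and extensional, the domain is transitive and satisfies ``every set is hereditarily countable'' plus enough of ZFC$^{-}$, and the ordinals of the model are uncountable while each element is countable --- this pins the transitive collapse to have height exactly the true $\omega_1$, so that the full second order quantifiers over the model range over \emph{all} subsets of the true $\omega_1$; (ii) $P$ is a transitive submodel of the same kind; (iii) the whole structure is the $HC$ of a one-Cohen-real extension of $P$, i.e.\ there is a real $g$ meeting every dense subset of Cohen forcing coded in $P$, with the reals of the domain equal to those of $P[g]$; and (iv) the reals of $P$ are exactly the definable class $R_*=\{r:\forall A\,(\Psi(A)\to r\in L[A])\}$, where $\Psi(A)$ is the second order rendering of the fast-club coding hypothesis of Lemma~\ref{eww}. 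The role of (i) is that, the height being fixed to the true $\omega_1$ and second order quantification being full, $R_*$ evaluates to one and the same set of reals in every model of $\phi$; granting that this set is exactly $\P(\omega)^V$, clause (iv) forces the marked part to have reals $\P(\omega)^V$, hence (being a transitive model of ``$V=HC$'') to equal $HC^V=HC^{V[G]}$; by (iii) the domain is then $HC^{V[G][g]}$ for a genuine Cohen real $g$ over $V[G]$, so $N\isom M_g$, and by the homogeneity argument $\Th_2(N)=\Th_2(M_x)$, as required.

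The hard part will be step (iv): pinning $R_*$ to be exactly $\P(\omega)^V$, and constructing the right formula $\Psi$. Lemma~\ref{eww} supplies the inclusion $\P(\omega)^V\subseteq L[A]$ for every capturing $A$, giving $\P(\omega)^V\subseteq R_*$ cleanly; the reverse inclusion is the delicate one, as it requires producing a capturing witness whose $L[A]$ adds no reals beyond $\P(\omega)^V$. The subtlety I expect to be central is that the quantifier ``for every club $C$'' in $\Psi$, read over $M_x$, ranges over all clubs of $V[G][H]$ (not merely those of $V[G][x]$), since the second order quantifiers are evaluated in the ambient model; this is exactly why the preparation is the length-$\omega_2$ fast club iteration and why the Cohen stage is ccc (so that every club of $V[G][H]$ contains a ground club), and it is where Lemmas~\ref{code357} and~\ref{closed} must be deployed to show the coding behaves correctly. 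I also anticipate routine but nontrivial work in verifying that ``$z\in L[A]$'', ``$C$ is a club'', ``$g$ is Cohen-generic over $P$'', and the height condition of (i) are each genuinely expressible by a single second order sentence over the relevant $HC$-structures.
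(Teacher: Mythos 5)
Your proposal is correct and follows essentially the same route as the paper: the same preparatory countable support iteration of fast club forcing followed by $\aleph_1$ Cohen reals, the same sentence $\phi$ whose clause (iv) pins the marked part's reals to $\{r:\forall A(\Psi(A)\to r\in L[A])\}$ via Lemma~\ref{eww}, and the same homogeneity and mutual-genericity arguments for completeness and non-categoricity. The one step you flag as delicate---the inclusion $R_*\subseteq\P(\omega)^V$---is handled in the paper exactly as you anticipate: one takes a capturing witness $A\in V$ (coding $([\omega_1]^\omega)^V$), using the ccc of the Cohen stage to shrink any club of $V[G][H]$ to one in $V[G]$ whose countable initial segments lie in $V$, hence in $L[A]$.
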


\begin{proof}Assume w.l.o.g., CH. As said above, we start with some preparatory countably closed forcing $\bP$ obtaining a generic extension $V[G]$. Then we add $\aleph_1$ Cohen-reals obtaining a further generic extension $V[G][H]$. In this model we consider for every $x\subseteq\omega$ the model $M_x$ as defined in (\ref{HC}).
Clearly, the cardinality of $M_x$ is $\aleph_1$. 
We shall now show that if $x$ is Cohen-generic over $V[G]$, e.g. one of the $\aleph_1$ many coded by $H$, then the complete second order theory of $M_x$ is  finitely axiomatizable (in second order logic). To end the proof of the theorem, we show that if $x$ and $y$ are mutually Cohen-generic over $V[G]$, then $M_x$ and $M_y$ are second order equivalent but non-isomorphic. 

In order to use second order logic over $\omega_1$ to talk about $HC^V$ and Cohen-genericity over $V$ we need to be able to 
decide, by the means offered by second order logic, which reals in $V[G][H]$ are in $V$ (or, equivalently, in $V[G]$) and which are not. This is precisely the purpose of the preparatory forcing $\bP$.

We denote the starting ground model by $V$ and assume, w.l.o.g., that $V$ satisfies CH. We let the preparatory forcing $\bP=\langle \bP_\alpha:\alpha<\omega_2\rangle$ be the countable support  iteration of $\langle \dot{Q}_\alpha:\alpha<\omega_2\rangle$, where $\bP_\alpha\force ``\dot{Q}_\alpha\mbox{ is the fast club}$ $\mbox{forcing $\bP_{\fc}$"}$. Let $G$ be $\bP$-generic over $V$ and $G_\alpha=G\cap \bP_\alpha$. In $V[G]$ we force with $\bQ$ a generic $H$.
Note that $\aleph_1^{V[G][H]}=\aleph_1^V$ and $\P(\omega)^{V[G]}=\P(\omega)^V$. Working in $V[G][H]$, let the second order sentence $\phi(R,E)$, where $R$ is unary and $E$ is binary, say in a model $M$:
\begin{enumerate}
\item[(1)] $E^M$ is a well-founded relation satisfying $ZFC^-$ $+$ ``every set is countable".   This should be also true when relativized to $R^M$.         
\item[(2)] $|M|=\aleph_1$.
\item[(3)] If $P'\in R^M$ denotes (in $M$) the set $\fn(\omega,2,\omega)$ of conditions for adding one Cohen real, then there is $K\subseteq P'$ such that $K$ is $P'$-generic over $R^M$ and $M\models ``V=R[K]"$.
\item[(4)]  If $a\subseteq\omega$ and the transitive collapse of $M$ is $N$, then the following conditions are equivalent:
\begin{enumerate}
\item $a\in R^N$.
\item If $A\subseteq \omega_1$ and for every club $C\subseteq\omega_1$ there is a club $D\subseteq C$   such that $D\cap\alpha\in L[A]$ for every $\alpha<\omega_1,$ then $a\in L[A]$.

\end{enumerate}\end{enumerate} 

Note that we can express $``D\cap\alpha\in L[A]"$, or equivalently $``\exists\beta(|\beta|=\aleph_1\wedge D\cap\alpha\in L_\beta[A]"$, in second order logic on $M$ since second order logic gives us access to all structures of cardinality $|M|$ (=$\aleph_1$).

\medskip

\noindent{\bf Claim:} The following conditions are equivalent in $V[G][H]$:

\begin{enumerate}
\item[(i)] $M\models\phi(R,E)$.
\item[(ii)] $M\cong M_x$ for some real $x$ which is Cohen generic over $V$. 
\end{enumerate}

\begin{proof}
(i) implies (ii): Suppose $M\models\phi(R,E)$. Let $(N,U,\in)$ be the transitive collapse of $(M,R^M,E^M)$. 
By (3), there is $r$ which is Cohen-generic over $U$ and $N=HC^{U[r]}$. We show that $U=HC^V$. Suppose $a\in\P(\omega)^V$. We use condition (4) to demonstrate that $a\in U$. To this end, let $A$ be as in (4b). By 
Lemma~\ref{eww}, $a\in L[A]$. Thus (4) implies $a\in U$. On the other hand, suppose $a\in (\P(\omega))^U$.
We again use (4) to show that $a\in \P(\omega)^V$. Let $A\subseteq\omega_1$ code $([\omega_1]^\omega)^V$. 
If $C$ is any club in $V[G][H]$, then, since $H$ is obtained by means of a CCC forcing, there is a club $D\subseteq C$ in $V[G]$. Now $D\cap\alpha\in V$, whence $D\cap\alpha\in L[A]$, for all $\alpha<\omega_1$. It follows that $a\in L[A]$. Since $A\in V$, we may conclude $a\in \P(\omega)^V$.  
Hence, $U=HC^V$ and $r$ is Cohen-generic over $V$. We have proved (ii).

(ii) implies (i): Suppose $(N,R^N,E^N)=(HC^{V[r]},HC^V,\in)$, where $r$ is $\fn(\omega,2,\omega)$-generic over $V$. We show that $(N,R^N,E^N)\models\phi(R,E)$. Conditions (1) and (2) are trivially satisfied. Condition (3) holds by construction. To prove that condition (4) holds, suppose $a\subseteq\omega$ and let $A$ be as in (4).  By 
Lemma~\ref{eww}, $a\in L[A]$. Condition (4) and thereby the Claim is proved.
\end{proof}

We continue the proof of Theorem~\ref{aleph1}. The sentence $\phi(R,E)$ is non-categorical in $V[G][H]$ because if we take two mutually generic (over $V[G]$) Cohen reals $r_0$ and $r_1$, then $M_{r_0}$ and $M_{r_1}$ are non-isomorphic models of $\phi(R,E)$. To prove that $\phi(R,E)$ is complete, suppose $(M,R^M,E^M)$ and $(N,R^N,E^N)$ are two models of  $\phi(R,E)$. W.l.o.g., they are of the form $(M,R^M,\in)$ and $(N,R^N,\in)$, where $M$ and $N$ are transitive sets. By construction, they are of the form $M_{r_0}$ and $M_{r_1}$ where both $r_0$ and $r_1$ are Cohen generic over $HC^V$, hence over $HC^{V[G]}$. They are subsumed by the generic $H$. By homogeneity of Cohen forcing $\fn(\omega,2,\omega)$ the models are second order equivalent.
\end{proof}

In fact the forcing gives something stronger. If $\kappa$ is a cardinal that is second order characterizable in the forcing extension, we may replace the model $M_x = (HC^{V[x]}, HC^V, \in)$, where $x \subseteq \omega$ is Cohen over $V$, with the model $(\kappa \cup HC^{V[x]}, HC^V, \in)$, and the proof of Theorem \ref{aleph1} goes through mutatis mutandis:
\begin{corollary}
There is a set of forcing conditions that forces the following: if $\kappa$ is any second order characterizable cardinal, there is a complete non-categorical finitely axiomatizable second order theory with a model of cardinality $\kappa$. \qed
\end{corollary}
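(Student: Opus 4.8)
The plan is to rerun the proof of Theorem~\ref{aleph1} over exactly the same forcing (the iteration $\bP$ followed by the Cohen forcing $\bQ$), replacing each model $M_x = (HC^{V[x]}, HC^V, \in)$ by the \emph{padded} model $M_x^\kappa = (\kappa \cup HC^{V[x]}, HC^V, \in)$ and upgrading the defining sentence $\phi(R,E)$ to a single sentence $\phi_\kappa(R,E)$ that additionally forces a model to have cardinality $\kappa$. Since $\kappa$ is second order characterizable in $V[G][H]$, I fix a sentence $\theta_\kappa$ in the empty vocabulary whose models are exactly the sets of size $\kappa$; in particular this already witnesses that $\kappa$ is a cardinal of $V[G][H]$. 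I take $\kappa \ge \aleph_1$, so that $\abs{M_x^\kappa} = \kappa$ (the case $\kappa = \aleph_1$ gives $\kappa \cup HC^{V[x]} = HC^{V[x]}$ and recovers Theorem~\ref{aleph1} verbatim).

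The first step is to note that the two parts of $M_x^\kappa$ are internally separable. The transitive set $\kappa \cup HC^{V[x]}$ models almost no set theory, but its \emph{hereditarily countable} elements are exactly $HC^{V[x]}$: every ordinal $\alpha$ with $\omega_1 \le \alpha < \kappa$ is genuinely uncountable in $V[G][H]$, so its transitive closure admits no injection into $\omega$ even as a second order object, whereas every member of $HC^{V[x]}$ is genuinely hereditarily countable. Hence the property ``$a$ is hereditarily countable'' (the transitive closure of $a$, itself a second order object over the model, injects into $\omega$) is expressible in second order logic and carves out precisely $HC^{V[x]}$. I then let $\phi_\kappa(R,E)$ assert: (a) $E$ is well-founded and extensional; (b) $\theta_\kappa$ holds, so the domain has size $\kappa$; (c) every element is either hereditarily countable or an ordinal, and the ordinals of the model form a cardinal, i.e.\ there is no second order injection of the ordinals of the model into a proper initial segment of themselves; and (d) the substructure carried by the hereditarily countable part, with $R$ and $E$ relativised to it, satisfies the original sentence $\phi$ of Theorem~\ref{aleph1}.

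With $\phi_\kappa$ in hand the three verifications of Theorem~\ref{aleph1} transfer. For the analogue of its Claim: if $M \models \phi_\kappa$, pass by (a) to the transitive collapse $N$; by (d) and the original Claim the hereditarily countable part of $N$ is $HC^{V[x]}$ for some $x$ Cohen-generic over $V$, with $R$ collapsing to $HC^V$; by (c) every remaining element is an ordinal and the ordinal height $\lambda$ of $N$ is a cardinal of $V[G][H]$, which together with (b) (giving $\abs{\lambda} = \kappa$) forces $\lambda = \kappa$, so $N = \kappa \cup HC^{V[x]}$ and $M \isom M_x^\kappa$; the converse is routine. Non-categoricity is unchanged: for mutually Cohen-generic $r_0, r_1$ an isomorphism $M_{r_0}^\kappa \to M_{r_1}^\kappa$ would be the identity by Mostowski, forcing $HC^{V[r_0]} = HC^{V[r_1]}$ and hence $r_0 \in V[r_1]$, contrary to mutual genericity. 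Completeness follows from the homogeneity of $\fn(\omega,2,\omega)$ exactly as before: the automorphism of $\bQ$ interchanging the coordinate blocks of $r_0$ and $r_1$ fixes the canonical names of the $V[G]$-parameters $\kappa$ and $HC^V$ and interchanges the names for $M_{r_0}^\kappa$ and $M_{r_1}^\kappa$, so these two models are second order equivalent; thus $\phi_\kappa$ is complete, and being a single sentence it finitely axiomatizes its theory.

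The step I expect to be the real obstacle is the faithfulness of the relativisation in (d): that evaluating $\phi$ over the hereditarily countable part of the $\kappa$-sized $M_x^\kappa$ returns the same truth values as evaluating $\phi$ over the stand-alone model $M_x$ of size $\aleph_1$. Although $M_x^\kappa$ carries many more second order objects, the quantifiers actually occurring in $\phi$ never exploit them: the club$/L[A]$ characterisation of $HC^V$ from condition~(4) quantifies only over subsets $A \subseteq \omega_1$ and clubs in $\omega_1$, and the genericity quantifier of condition~(3) ranges over subsets of a countable forcing, so all of them range over subsets of $\omega_1$-sized sets. These objects, namely $\powset(\omega_1) \cap V[G][H]$ and the like, are identical whether computed over the small or the large model, so Lemma~\ref{eww} applies unchanged and the relativised $\phi$ holds exactly when the original $\phi$ did. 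A secondary point to confirm is that ``the ordinals form a cardinal'' together with $\theta_\kappa$ really pins the ordinal height to $\kappa$ rather than to some larger ordinal of cardinality $\kappa$; this is precisely where second order characterizability of $\kappa$ is used, since it guarantees $\kappa$ is a cardinal of $V[G][H]$ and hence the unique cardinal of its own cardinality.
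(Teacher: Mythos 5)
Your proposal is correct and follows exactly the paper's route: the paper disposes of this corollary with a one-line remark that one replaces $M_x$ by the padded model $(\kappa\cup HC^{V[x]},HC^V,\in)$ and that the proof of Theorem~\ref{aleph1} goes through \emph{mutatis mutandis}, which is precisely the modification you carry out. Your added details --- separating the hereditarily countable part from the ordinal part inside $\phi_\kappa$, checking that the relativisation of $\phi$ is faithful, and noting where second order characterizability of $\kappa$ pins down the ordinal height --- are exactly the ``mutatis mutandis'' the paper leaves implicit.
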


Since the non-isomorphic models above derive from mutually generic Cohen reals, it follows that the non-categorical theories in question have (at most) continuum many non-isomorphic models. We lastly mention how to get non-categorical theories with more models than this.

It is straightforward to see that in theorem \ref{aleph1} and the constructions preceding it, the cardinal $\aleph_1$ may be replaced with any cardinal $\mu^+$ with $\mu$ regular. That is, the $\omega_2$-length countable support iteration of fast club forcing at $\omega_1$ is replaced by a $\mu^{++}$-length $\leq \mu$-sized support iteration of fast club forcing at $\mu^+$, and the forcing to add $\aleph_1$ many Cohen subsets of $\omega$ is replaced by adding $\mu^+$ many Cohen subsets of $\mu$. The model $M_x$ is then taken to be of the form $(H(\mu)^{V[x]}, H(\mu)^V, \in)$ where $x$ is a Cohen subset of $\mu$ generic over $V$.

From this variation, we then get the following corollary.
\begin{corollary}
Suppose $\mu$ is a regular cardinal. There is then a set of forcing conditions that forces the following: if $\mu$ is second order characterizable, and if $\kappa \geq \mu$ is any second order characterizable cardinal, there is a complete non-categorical finite second order theory $T$ with a model of cardinality $\kappa$. Also, the theory $T$ has between $\mu^+$ and $2^\mu$ many models up to isomorphism.
\end{corollary}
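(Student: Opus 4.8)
The plan is to run the two variations of Theorem~\ref{aleph1} recorded above \emph{simultaneously}. I would fix the regular cardinal $\mu$, assume (as in Theorem~\ref{aleph1}, and w.l.o.g.\ after a harmless preliminary forcing) the relevant GCH instances, in particular $2^\mu=\mu^+$, and take the forcing to be the $\mu^{++}$-length, $\le\mu$-support iteration $\bP$ of fast club forcing at $\mu^+$, followed by $\bQ=\fn(\mu^+\times\mu,2,\mu)$, which adjoins $\mu^+$ mutually Cohen-generic subsets of $\mu$; write $V[G][H]$ for the resulting extension. The substantive preliminary work is to verify the $\mu$-analogues of Lemmas~\ref{code357}, \ref{closed} and \ref{eww}, obtained by replacing $\omega,\omega_1,\omega_2$ throughout by $\mu,\mu^+,\mu^{++}$, ``countable'' by ``of size $\le\mu$'', and $\sigma$-closed by $\mu^+$-closed (so that $\bP$ adds no new subsets of $\mu$, whence $\powset(\mu)^{V[G]}=\powset(\mu)^V$). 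Granting these, the argument of Theorem~\ref{aleph1} produces a single second order sentence $\phi$, now spoken over a domain of size $\mu^+$, whose models up to isomorphism are precisely the structures $M_x$ attached to subsets $x\subseteq\mu$ Cohen-generic over $V$; as there, $\phi$ is complete (via the homogeneity of the Cohen forcing) and non-categorical (via two mutually generic subsets of $\mu$).

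To reach a model of the prescribed cardinality $\kappa\ge\mu$ I would then import the padding device of the preceding corollary: replace each $M_x$ by the structure with domain $\kappa\cup M_x$ and the same predicate, and replace $\phi$ by a sentence $\phi_\kappa$ conjoining a relativized copy of $\phi$ on a definable sub-part $P$ with the second order characterization $\theta_\kappa$ of the full domain, using the second order characterization $\theta_\mu$ of $\mu$ to locate the $M_x$-part inside the larger domain. This is exactly where both hypotheses, that $\mu$ and that $\kappa$ are second order characterizable, are used. Because the adjoined copy of $\kappa$ is a rigid, second-order-definable well-order sitting above $M_x$, any isomorphism of two padded structures fixes the pad pointwise and restricts to an isomorphism of the $M_x$-parts, while dually a second order sentence about $\kappa\cup M_x$ reduces to a boolean combination of second order sentences about the (fixed) pad and about $M_x$; hence $\phi_\kappa$ inherits both completeness and non-categoricity from $\phi$, and its models have cardinality $\kappa$ (the GCH computing $\lvert M_x\rvert\le\kappa$).

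Finally I would read off the model count, which is the genuinely new content. The reduction in the previous paragraph shows that the isomorphism types of models of $\phi_\kappa$ are in bijection with those of the $M_x$, so it suffices to count the latter. For the lower bound, the $\mu^+$ Cohen subsets of $\mu$ added by $\bQ$ are pairwise mutually generic, so the corresponding $M_x$ are pairwise non-isomorphic, giving at least $\mu^+$ types. For the upper bound, every model is isomorphic to some $M_x$ with $x\in\powset(\mu)^{V[G][H]}$, and there are at most $2^\mu$ such $x$, giving at most $2^\mu$ types. I expect the main obstacle to be the first step, namely confirming that the interlace-coding construction of Lemma~\ref{code357} survives the passage to $\mu$-length approximations: one must check that fast club forcing at $\mu^+$ retains the right closure and (under $2^\mu=\mu^+$) the $\mu^{++}$-chain condition, that the descending sequences $(p_\eta),(q_\eta),(r_\eta)$ of length $\mu$ admit lower bounds, and that the density facts underlying the coding persist. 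The padding and counting steps are then routine given the two preceding corollaries.
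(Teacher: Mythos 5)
Your proposal is correct and follows essentially the same route as the paper, which proves this corollary only by the informal remark preceding it: replace $\omega,\omega_1,\omega_2$ by $\mu,\mu^+,\mu^{++}$ in Theorem~\ref{aleph1} and Lemmas~\ref{code357}--\ref{eww}, pad the models $M_x$ up to a second order characterizable $\kappa$ as in the previous corollary, and count isomorphism types by the $\mu^+$ mutually generic Cohen subsets of $\mu$ from below and by $|\powset(\mu)|=2^\mu$ from above. Your explicit flagging of the closure and chain-condition checks for fast club forcing at $\mu^+$ (and the length-$\mu$ descending sequences in the interlace coding) is exactly the content the paper compresses into ``it is straightforward to see.''
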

Note that the concern of the second order characterizability of $\mu$ and $\kappa$ in the forcing extension are irrelevant for cardinals with simple definitions such as $\aleph_n$, $n < \omega$ or $\aleph_{\omega_1 + 1}$, for example.

In conclusion we cannot hope to prove the categoricity of finite complete second order theories from large cardinals even if we restrict to theories which have a model of regular uncountable cardinality.

\section{Forcing categoricity}

In \cite{AFWO1} (for $\kappa > \omega_1$) and \cite{AFWO2} (for $\kappa = \omega_1$), Aspero and Friedman proved the following:
\begin{theorem}
Suppose $\kappa$ is the successor of a regular cardinal, and uncountable. Then there is a poset $\bP$ such that in a generic extension by $\bP$, there is a lightface first order definable well-order of $H(\kappa^+)$.
\end{theorem}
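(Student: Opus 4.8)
The plan is to follow the \emph{coding by localization} strategy of Aspero--Friedman. Write $\mu$ for the regular cardinal with $\kappa=\mu^+$. First I would absorb a preparatory, suitably closed forcing to arrange GCH at and above $\mu$, so that $\abs{H(\kappa^+)}=\kappa^+$ and, more importantly, every $x\in H(\kappa^+)$ is \emph{canonically} coded by a subset of $\kappa$: the transitive closure of $x$ has size $\le\kappa$, hence is represented by a well-founded extensional relation on $\kappa$, i.e. (via a fixed definable bijection $\kappa\times\kappa\cong\kappa$) by some $A\subseteq\kappa$, with $x$ recoverable as the Mostowski collapse of the relation coded by $A$. Since this decoding $A\mapsto x$ is first order and parameter-free, it suffices to produce a lightface first order well-order $<^*$ of $\powset(\kappa)$: then $x$ is ranked by the $<^*$-least $A$ coding it, and the induced well-order of $H(\kappa^+)$ is again lightface.

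To build $<^*$ I would force every $A\subseteq\kappa$ to be \emph{written} at a definable ordinal below $\kappa^+$, and set $A<^*B$ iff the least location writing $A$ precedes the least location writing $B$. Concretely, $\bP$ is a $\kappa^+$-length iteration with supports of size $\le\mu$, equipped with a bookkeeping that enumerates all $\kappa$-subsets appearing along the way. At the stage assigned to a pair $(A,\alpha)$, where $\alpha<\kappa^+$ is a coding location of cofinality $\kappa$, a single homogeneous iterand encodes the characteristic function of $A$ into the combinatorial structure below $\alpha$: along a definable cofinal block $\langle\alpha_\xi:\xi<\kappa\rangle$ of coding points, the bit ``$\xi\in A$'' is recorded by whether a fixed, lightface-definable stationary set reflects at $\alpha_\xi$ (equivalently, by shooting a club through its complement, or by a $\diamondsuit$-style guessing pattern) for the case $\kappa>\omega_1$, while the case $\kappa=\omega_1$ requires instead an almost-disjoint/localization coding at the level of reals and subsets of $\omega_1$.

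The decoding formula $\Psi(A,\alpha)$ then asserts, first order over $H(\kappa^+)$, that the reflection pattern read off the canonical coding points below $\alpha$ equals the characteristic function of $A$; because the coding points and the stationary set are themselves definable without parameters, $\Psi$ is lightface, and hence so is $<^*$. That the extension computes $H(\kappa^+)$ correctly---in particular that $\kappa^+$ is not collapsed and cofinalities are preserved---I would derive from the closure and $\kappa^+$-chain-condition of $\bP$ under GCH, so that $\powset(\kappa)$ of the final model is exactly the set being well-ordered.

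The main obstacle is the coding device and its \emph{permanence}. One must select a combinatorial property that is (i) first order expressible uniformly and without parameters in the final $H(\kappa^+)$, (ii) independently forceable to hold or fail at each coding point by one iterand, and (iii) never disturbed by later stages---so that what is written stays written and no spurious $A'$ satisfies $\Psi$. Clause (iii) is the crux: it demands the right preservation lemma (non-reflecting stationarity, respectively club-guessing, survives the tail of the iteration) together with a chain-condition/properness analysis that keeps $\kappa^+$ intact under the bookkeeping. It is precisely this preservation analysis that differs between $\kappa=\omega_1$ and $\kappa>\omega_1$, which is why the two cases are handled in the separate references \cite{AFWO2} and \cite{AFWO1}.
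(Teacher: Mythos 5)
The paper does not actually prove this theorem: it is quoted as an external result of Asper\'{o} and Friedman, with \cite{AFWO1} cited for the case $\kappa>\omega_1$ and \cite{AFWO2} for $\kappa=\omega_1$, so there is no in-paper argument to compare yours against. Your sketch is a faithful reconstruction of the strategy of those papers: reduce to a lightface well-order of $\powset(\kappa)$ via canonical coding of elements of $H(\kappa^+)$ by subsets of $\kappa$, then run a $\kappa^+$-length iteration with bookkeeping that writes each $A\subseteq\kappa$ at a definable coding location, reading the bits off a stationary-reflection or club-guessing pattern when $\kappa>\omega_1$ and an almost-disjoint/localization coding when $\kappa=\omega_1$.

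As a proof, however, it has a genuine gap, and you name it yourself. Your clauses (ii) and (iii) --- that a single iterand can independently write each bit, and that nothing written is later erased nor any spurious pattern created, so that the decoding formula $\Psi$ is satisfied by the intended $A$ and by no other set --- are precisely the technical content of the Asper\'{o}--Friedman papers. You do not supply the preservation lemmas, the exact definition of the iterands and of the parameter-free coding points, or the closure and chain-condition analysis showing $\kappa^+$ is preserved and $\powset(\kappa)$ of the extension is exactly the set being well-ordered; the $\kappa=\omega_1$ case is deferred entirely to a different (unspecified) coding. What you have is a correct roadmap rather than a proof, which is acceptable here only because the theorem is imported into the paper as a black box.
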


Since we can translate a first order lightface definable well-order of $H(\kappa^+)$ into a well-order of $\powset(\kappa)$ that is second order definable over any structure of cardinality $\kappa$, we obtain the following corollary.

\begin{theorem}
Suppose $\kappa$ is the successor of a regular cardinal, uncountable, and that $\kappa$ is second order characterizable. Then there is a poset $\bP$ that forces the following: every finitely axiomatizable second order theory with a model of cardinality $\kappa$ is categorical. \qed
\end{theorem}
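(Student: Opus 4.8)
The plan is to force with the Aspero--Friedman poset and then run the categoricity argument of Solovay, in the form of the $L[U]$ case proved above, on top of the resulting definable well-order. Since $\kappa$ is the successor of a regular uncountable cardinal, the cited theorem provides a poset $\bP$ such that, in the generic extension $V[G]$, there is a lightface first order definable well-order $\prec$ of $H(\kappa^+)$. I work in $V[G]$ throughout. By the remark preceding the statement, $\prec$ translates into a well-order of $\powset(\kappa)$, and more generally of all structures of cardinality $\kappa$ (each coded by a binary relation on a domain of that size), which is definable in second order logic over \emph{any} structure of cardinality $\kappa$; I abbreviate this induced second order definable well-order again by $\prec$.

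Next I reduce to models of a single cardinality. Let $\phi$ be a complete finitely axiomatized second order sentence, whose vocabulary is a single binary relation symbol $R$ for simplicity, and suppose $\phi$ has a model of cardinality $\kappa$. Exactly as in the first paragraph of the proof of the $L[U]$ theorem, the relativization trick (passing to $\exists P \exists R'(\theta'(P) \wedge \phi'(P,R'))$, where $\theta'(P)$ asserts that $P$ is strictly smaller than the ambient domain) together with the completeness of $\phi$ shows that $\phi$ cannot have models of two distinct cardinalities. Hence every model of $\phi$ has cardinality exactly $\kappa$.

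Now I build the categoricity sentence. Over a structure $M$ of cardinality $\kappa$, second order logic interprets $(H(\kappa^+), \in)$: elements of $H(\kappa^+)$ are coded by well-founded extensional binary relations on the domain, which are available to the second order quantifiers, and the second order characterization $\chi_\kappa$ of $\kappa$ certifies that the ambient domain really has size $\kappa$, so that the coding is faithful. Under this interpretation the lightface well-order $\prec$ of $H(\kappa^+)$ becomes second order expressible with no parameters, which is precisely why a lightface well-order is what is needed. Let $M_0$ be the $\prec$-least code of a structure of cardinality $\kappa$ satisfying $\phi$, and let $\theta$ be the second order sentence asserting: the ambient domain satisfies $\chi_\kappa$; there is a code $m$ of a structure $(P,S)$ with $\phi'(P,S)$ and $m$ is $\prec$-least such; and the ambient model is isomorphic to $(P,S)$, witnessed by a bijection $F$ with $R(x,y) \leftrightarrow S(F(x), F(y))$ for all $x,y$. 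Then $M_0 \models \theta$, and since $\phi$ is complete and $M_0 \models \phi$ we obtain $\phi \models \theta$; consequently every model $N \models \phi$ satisfies $\theta$ and is therefore isomorphic to $M_0$. Thus $\phi$ is categorical.

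The main obstacle is the content of the third paragraph: verifying that second order logic over a $\kappa$-sized structure interprets $(H(\kappa^+), \in)$ faithfully and that the lightface formula defining $\prec$ can be evaluated inside this interpretation without parameters. This is exactly the translation asserted in the remark preceding the statement, and it is where both the second order characterizability of $\kappa$ (to pin the domain size, hence the correct copy of $H(\kappa^+)$) and the hereditarily-$\leq\kappa$ coding are used. Once the interpretation is in hand, the argument is a routine adaptation of the categoricity proofs for $V = L$ and $V = L[U]$ given above, with $\prec$ playing the role that $<_{L[U]}$ played there.
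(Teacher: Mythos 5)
Your proposal is correct and takes essentially the same route as the paper, which states this theorem as an immediate corollary of the Asper\'o--Friedman result together with the remark that a lightface first order definable well-order of $H(\kappa^+)$ translates into a second order definable well-order of $\powset(\kappa)$ over any structure of cardinality $\kappa$; your third paragraph simply makes explicit the least-model argument (in the style of the $V=L$ and $V=L[U]$ cases) that the paper leaves implicit. The only quibble is that the hypothesis is that $\kappa$ is uncountable and is the successor of a regular cardinal (so $\kappa=\omega_1$ is allowed, covered by the second cited Asper\'o--Friedman paper), not that the predecessor is uncountable.
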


We are thus left to consider the case of theories with models of limit cardinality, whether regular or singular.

The following theorem shows that the categoricity of complete second order theories with a model of singular cardinality is (relatively) consistent with large cardinals. We are indebted to Boban Veli\v{c}kovi\'{c} for suggesting how to improve an earlier weaker version of this result.

\begin{theorem}\label{sing}
Suppose $\kappa$ is a singular strong limit with uncountable cofinality $\lambda$. Then there is a forcing notion $\bP$ of cardinality $\kappa$ such that  
\begin{enumerate}
\item $\bP$ preserves $\kappa$ singular strong limit of uncountable cofinality $\lambda$.
\item $\bP$ forces the statement: Every finitely axiomatizable complete second order theory with a model of cardinality $\kappa$ is categorical.
\end{enumerate}
\end{theorem}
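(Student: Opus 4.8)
The plan is to reduce categoricity to the construction of a single second-order-definable well-order of $\P(\kappa)$. Suppose we can exhibit one second-order formula $\psi(X,Y)$, using as parameters only the ordering $(\kappa,<)$ together with set-parameters that every structure of size $\kappa$ can recover, so that $\psi$ defines a well-order $\prec$ of $\P(\kappa)$. Then, given any complete finitely axiomatizable $\phi$ with a model of cardinality $\kappa$, we form the second-order sentence $\theta$ asserting ``I am isomorphic to the $\prec$-least $X\subseteq\kappa$ coding a model of $\phi$'', exactly as in the $L$- and $L[U]$-arguments above; since $\theta$ holds in the $\prec$-least model and $\phi$ is complete, $\theta$ holds in every model of $\phi$, so $\phi$ is categorical. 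Thus the entire content lies in producing $\prec$.

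First I would arrange, WLOG by a mild preliminary forcing, that GCH holds up to $\kappa$, fix a continuous increasing cofinal sequence $\langle\kappa_\xi:\xi<\lambda\rangle$ in $\kappa$ with $\kappa_0>\lambda$, and fix a ground-model well-order $W$ of $V_\kappa$. Following a reverse-Easton coding scheme, I would build an iteration $\bP$ of length and cardinality $\kappa$ which, at each beth fixed point $\mu<\kappa$, uses an Easton-support product of Cohen forcings $\fn(\aleph_{\mu+\dots},2,\aleph_{\mu+\dots})$ to write the characteristic function of a $W$-chosen code for $V_\mu$ into the pattern $\alpha\mapsto 2^{\aleph_{\mu+\omega\cdot\alpha+1}}$ on the block of cardinals just above $\aleph_\mu$. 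Since each coding block lies strictly below $\kappa$ and supports are Easton, $\bP$ preserves cardinals and cofinalities up to $\kappa$, keeps $\kappa$ a strong limit (each coded value $\aleph_{\mu+\omega\cdot\alpha+3}$ remains $<\kappa$) and fixes $\cf(\kappa)=\lambda$, giving part (1). In $V[G]$ the pattern is second-order readable, yielding coherent local well-orders $<^*_\mu$ of the $V_\mu$ that amalgamate into one second-order-definable well-order $<^*$ of $V_\kappa$; as $|V_\kappa|=\kappa$, this $<^*$ is a relation on a set of size $\kappa$ and so is available to second-order logic over any structure of size $\kappa$. I would also code $\langle\kappa_\xi\rangle$, making $\lambda$ and the cofinal sequence second-order definable.

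The crux --- and where uncountable cofinality is indispensable --- is upgrading $<^*$ on $V_\kappa$ to $\prec$ on $\P(\kappa)$. The natural map sends $X\subseteq\kappa$ to $\langle X\cap\kappa_\xi:\xi<\lambda\rangle\in\prod_{\xi<\lambda}\P(\kappa_\xi)$, each coordinate lying in $V_\kappa$ hence ranked by $<^*$; and since $\kappa$ is a strong limit, $\P(\lambda)\subseteq V_\kappa$ is itself well-ordered by $<^*$. The obstruction is that the naive first-difference (lexicographic) comparison on these $\lambda$-sequences is not well-founded. The plan is to compare instead through a $\sigma$-complete filter on $\lambda$: for $X\ne Y$ the sign set $\set{\xi:X\cap\kappa_\xi<^* Y\cap\kappa_\xi}$ and its complement split a tail of $\lambda$, so $\sigma$-completeness kills descending $\omega$-chains and yields well-foundedness. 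With a measurable one would take a normal ultrafilter and finish; absent that, I would use the club filter (which is $\sigma$-complete as $\lambda$ is regular uncountable) for well-foundedness and resolve totality by tie-breaking with the definable well-order of $\P(\lambda)$ --- declaring $X\prec Y$ by whether one sign set contains a club, and otherwise by the $<^*$-least of the two sign sets. Verifying that this hybrid relation is transitive and well-founded is the hard part, and is where I expect the effort to concentrate; the fallback, should the hybrid comparison resist, is to fold into $\bP$ a component forcing, in the Aspero--Friedman style above, a lightface-definable well-order of $H(\kappa^{+})$, kept of size $\kappa$ by exploiting that every subset of the singular $\kappa$ is assembled from its ${<}\kappa$-sized traces along the cofinal sequence.

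Granting a definable $\prec$, the theorem follows: part (1) is the preservation secured during the construction of $\bP$, and part (2) is the reduction of the first paragraph, since each complete finitely axiomatizable $\phi$ with a model of size $\kappa$ now has a $\prec$-least model to which, by completeness, all its models are isomorphic. To reiterate, the sole genuine obstacle is the passage from $<^*$ on $V_\kappa$ to a \emph{total, well-founded, second-order-definable} $\prec$ on $\P(\kappa)$ using only that $\cf(\kappa)=\lambda$ is uncountable and regular.
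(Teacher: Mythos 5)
Your preparatory forcing and the reduction of part (2) to a definable selection principle match the paper, and you have correctly located the crux in the passage from the local well-orders $<^*_{\mu}$ on the $V_\mu$'s to something usable on $\P(\kappa)$. But the step you flag as "the hard part" is a genuine gap, and the way you propose to fill it does not work. The hybrid relation (club-filter comparison with a $<^*$-tie-break on the sign sets) is neither transitive nor obviously well-founded: if $X\prec Y$ and $Y\prec Z$ are both decided by the tie-break, nothing relates the sign set of $(X,Z)$ to those of $(X,Y)$ and $(Y,Z)$, and descending chains can alternate between club-decided and tie-broken steps. Your fallback (folding in an Asper\'o--Friedman component for $H(\kappa^+)$) is also not available here: those constructions are for successors of regular cardinals, which is exactly why the paper treats the singular case by a separate argument. (Your remark that a normal ultrafilter on $\lambda$ would finish the job is correct, but that proves only the weaker theorem in which $\lambda$ is assumed measurable.)

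The paper's key move is to \emph{not} build a total well-order of $\P(\kappa)$ at all. For a fixed definable family $\mathcal F\subseteq\P(\kappa)$ (the codes of models of $\phi$), it defines $X\mathbin{R}Y$ iff $X\cap\kappa_\xi<^*_{\kappa_\xi}Y\cap\kappa_\xi$ for all but boundedly many $\xi<\lambda$; uncountable cofinality makes $R$ well-founded, so the set $\mathcal W$ of $R$-minimal elements is nonempty. An Erd\H{o}s--Rado argument (coloring pairs of $\mathcal W$ by the first coordinates where each of the two comparisons holds) shows $\abs{\mathcal W}<\kappa$. Each $X\in\mathcal W$ is then represented by the function $\xi\mapsto(\text{index of }X\cap\kappa_\xi\text{ in }<^*_{\kappa_\xi})$, whose range collapses into some $\gamma<\kappa$, so $X$ is coded by a \emph{bounded} subset of $\kappa$ and $\mathcal W$ is well-ordered by $<^*_\mu$ for a large enough beth fixed point $\mu<\kappa$. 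The $\lhd$-least element of $\mathcal W$ is the distinguished code, and totality on all of $\P(\kappa)$ is never needed. Without this (or some substitute for a genuinely total definable well-order of $\P(\kappa)$, which you do not supply), your argument is incomplete.
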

\begin{proof} 
W.l.o.g. we assume GCH up to $\kappa$. We first force a second order definable well-order of the bounded subsets of $\kappa$ with a reverse Easton type iteration of length $\kappa$ described in \cite[Theorem 20]{MR540771}.  

Let $e:\kappa\to\kappa$ be the function which lists the set $B$ of beth fixed points $>\lambda$ in increasing order, and let $S=\langle\kappa_\xi: \xi<\lambda\rangle \subseteq B$ be an increasing cofinal sequence in $\kappa$ such that $\kappa_0>\lambda$.  Let $\pi:\kappa\times\kappa\to\kappa$ be the G\"odel pairing function. Let $W$ be a well-order of $V_\kappa$.
Suppose $A\subseteq\mu$, where $\mu\in B$. We write $A\sim V_\mu$ if 

$$(V_\mu,\in)\cong(\mu,\{(\alpha,\beta):\pi(\alpha,\beta)\in A\}).$$ 

\noindent Let the poset  $E(\mu,A)$ be the iteration (product) of the posets $\bR_\alpha$, $\alpha<\mu$, where
$$\bR_\alpha=
\left\{\begin{array}{ll}
\fn(\aleph_{\mu+\alpha+3}\times \aleph_{\mu+\alpha+1},2,\aleph_{\mu+\alpha+1}),
&\mbox{ if $\alpha=\omega\cdot\beta$ and $\beta\in A$}\\
\fn(\aleph_{\mu+\alpha+4}\times \aleph_{\mu+\alpha+2},2,\aleph_{\mu+\alpha+2}),
&\mbox{ if $\alpha=\omega\cdot\kappa_\xi+1$, $\xi<\lambda$}\\
(\{0\},=)&\mbox{ otherwise}\end{array}\right.$$
\noindent with Easton support i.e. $E(\mu,A)$ consists of functions 
$p\in \prod_{\alpha<\mu} \bR_\alpha$ such that, denoting the support $\{\alpha:f(\alpha)\ne\emptyset\}$ of $f$ by $\supp(p)$,  $|\supp(p)\cap\gamma|<\gamma$ for all regular $\gamma$. 
 
We now define an iteration $\langle\bP_\alpha:\alpha<\kappa\rangle$ with the property that $\bP_\alpha$ does not change beth fixed points $\beta=\beth_\beta$ for any $\beta$. We let $\bP=\langle \bP_\alpha:\alpha<\kappa\rangle$ be the following iteration:
If $\alpha$ is a limit ordinal, we use direct limits for regular $\alpha$ and inverse limits for singular $\alpha$. Suppose then $\alpha=\beta+1$. Let $\dot{A}$ be the $W$-first $\bP_\beta$-name $\dot{A}$ in $V_\kappa$ such that $\bP_\beta\force \dot{A}\sim V_{\check{e}(\check{\beta})}$. Then $\bP_\alpha=\bP_\beta\star E(\check{e}(\check{\beta}),\dot{A})$. Let $G$ be $\bP$-generic over $V$ and $G_\alpha=G\cap \bP_\alpha$.


In the forcing extension $V[G]$, for every $\mu\in B$ there is a set $A\subseteq\mu$ which codes, via the canonical bijection $\pi:\kappa\times\kappa\to\kappa$, a bijection $f_A \colon \mu\to (V_\mu)^{V[G]}$. The set $A$ itself satisfies
$$V[G]\models A=\{\alpha<\mu: 2^{\aleph_{\mu+\omega\cdot\alpha+1}}=\aleph_{\mu+\omega\cdot\alpha+3}\}$$
and from $A$ we can read off $f_A$ and a well-order $<_\mu^*$ of $(V_\mu)^{V[G]}$:
$$V[G]\models f_A(\alpha)<_\mu^*f_A(\beta)\iff \alpha<\beta<\mu.$$

Now working in $V[G]$, fix a collection $\mathcal F \subseteq \powset(\kappa)$, and we set out to define a well-order not on the whole of $\mathcal F$ but a certain subset of it. Define a relation $R$ on $\mathcal F$ by
\[
X R Y \iff X \cap \kappa_\xi <^*_{\kappa_\xi} Y \cap \kappa_\xi \text{ for all but boundedly many } \xi < \lambda.
\]
As $\lambda$ is uncountable, $R$ is well-founded, so the set
\[
\mathcal W = \set{X \in \mathcal F : X \text{ is minimal in } R}
\]
is nonempty, and if $X, Y \in \mathcal W$ with $X \neq Y$, then both $X \cap \kappa_\xi <^*_{\kappa_\xi} Y \cap \kappa_\xi$ and $Y \cap \kappa_\xi <^*_{\kappa_\xi} X \cap \kappa_\xi$ occur for unboundedly many $\xi < \lambda$.

To see that $\abs{\mathcal W} < \kappa$, suppose to the contrary that $\abs{\mathcal W} \geq \kappa$ and define a coloring $c \colon [\mathcal W]^2 \to \lambda$ by $c(\set{X,Y}) = \pi(\xi_1, \xi_2)$ where $\xi_1$ is the least $\xi < \lambda$ such that $X \cap \kappa_\xi <^*_{\kappa_\xi} Y \cap \kappa_\xi$, and $\xi_2$ is the least $\xi < \lambda$ such that $Y \cap \kappa_\xi <^*_{\kappa_\xi} X \cap \kappa_\xi$. Since $\abs{\mathcal W} \geq \kappa > (2^\lambda)^+$, by the Erdös-Rado theorem there is a set $H \subseteq \mathcal W$ homogeneous for $c$ of color $\pi(\xi_1, \xi_2)$ and cardinality $\lambda^+$. But this is a contradiction, since ordering $H$ in $<^*_{\kappa_{\xi_1}}$-increasing order yields an infinite decreasing sequence in the well-order  $<^*_{\kappa_{\xi_2}}$, so $\abs{\mathcal W} < \kappa$.

Now for each $X \in \mathcal W$, define $f_Y \colon \lambda \to \kappa$ such that $f_X(\xi)$ is the index of $X \cap \kappa_\xi$ in the well-order $<^*_{\kappa_\xi}$. Then the set $\bigcup \set{\ran(f_X) : X \in \mathcal W}$ has some cardinality $\gamma < \kappa$, and we can let $h \colon \bigcup \set{\ran(f_X) : X \in \mathcal W} \to \gamma$ be the transitive collapse map.

Then for $X \in \mathcal W$, the function $h \circ f_X \colon \lambda \to \gamma$ can be encoded as a subset of a large enough $\mu \in B$, and obviously $h \circ f_X \neq h \circ f_Y$ if $X \neq Y$, so we can well-order $\mathcal W$ by
\[
X \lhd Y \iff h \circ f_X <^*_\mu h \circ f_Y
\]
and all this is second order definable in $V[G]$ in a structure of size $\kappa$, if the collection $\mathcal F$ is. This allows us to pick a distinguished element of $\mathcal F$ as the $\lhd$-least $R$-minimal element.

Suppose now that $\phi$ is a complete second order sentence with a model $M$ of cardinality $\kappa$, and let $\mathcal F$ consist of the set of $X \subseteq \kappa$ encoding a model of $\phi$. Note that over a model of cardinality $\kappa$ we can write a formula $\phi_R(X,Y)$ expressing $XRY$ for $X, Y \in \mathcal F$, a formula $\phi_{\mathcal W}(X)$ expressing $X \in \mathcal W$, and a formula $\phi_\lhd(X,Y)$ expressing $X \lhd Y$ if $X$ and $Y$ are $R$-minimal.

Let $M \models \Phi$ now say that $X \subseteq M$ encodes a model isomorphic to $M$ (and thus satisfies $\phi$), and for any $Y \subseteq M$ that also encodes a model of $\phi$, $\lnot \phi_R(Y,X)$, and moreover if for all $Z \subseteq M$ that encode a model of $\phi$ also $\lnot \phi_R(Z,Y)$, then $X = Y$ or $\phi_\lhd(X,Y)$. That is, $X \in \mathcal W$ and if also $Y \in \mathcal W$ then $X = Y$ or $X \lhd Y$, which uniquely specifies $X$. As the model of $\phi$ with the least code in this sense satisfies $\Phi$ and $\phi$ is complete, $\phi$ implies $\Phi$ and thus that all models of $\phi$ are isomorphic, so $\phi$ is categorical.
\end{proof}

The method of the preceding proof does not extend to the cases of the limit cardinal $\kappa$ being regular, or of countable cofinality, so these cases are left open.

In conclusion, no known large cardinal axiom (e.g. the existence  of huge cardinals)
can decide whether all complete second order theories with a model of singular cardinality are categorical. In particular, such axioms cannot imply that all finite complete second order theories are categorical.

\section{Theories with only countably many models}

Since under PD we have non-categorical complete recursively axiomatized second order theories, we may ask how badly categoricity can fail in those cases? Echoing  Vaught's Conjecture, we may ask whether the number of countable non-isomorphic models of a complete recursively axiomatized second order theory is always countable or $2^\omega$. Leaving this question unresolved, we have the following result which demonstrates the ability of categorical theories to `capture' (in the sense of \cite{MR0221924}) the models of non-categorical theories.
  
\begin{theorem}\label{three} Assume $AD^{L(\R)}$.
If $T$ is a recursively axiomatized complete second order theory with only countably many non-isomorphic countable models, then there is a recursively axiomatized categorical second order theory $S$ the unique model of which interprets all the countable models of $T$.\end{theorem}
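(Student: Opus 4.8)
The plan is to build a single categorical theory $S$ whose unique model $\mathcal{U}$ is rich enough to contain isomorphic copies of all countably many countable models of $T$, together with enough structure to define them and to pin down $\mathcal{U}$ itself up to isomorphism. First I would invoke the hypothesis that $T$ has only countably many countable models up to isomorphism, say $N_0, N_1, N_2, \dots$ (finitely or countably many). Since $T$ is complete and recursively axiomatized and we assume $AD^{L(\R)}$, the set of reals coding countable models of $T$ is projective and invariant under isomorphism, so by the capturing technology (in the sense of \cite{MR0221924}) under $AD^{L(\R)}$ one expects to find a \emph{Scale} or a uniformizing selection that picks, in a definable way, a transversal of representatives $\{N_i : i < \omega\}$. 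The key point is that the list of models can itself be captured by a real in a canonical inner model, e.g. a real $r \in \R \cap L(\R)$ coding the sequence $\langle N_i : i<\omega\rangle$, obtained from the determinacy of the relevant projective (indeed $L(\R)$) games.

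Next I would take the unique model of $S$ to be something like $\mathcal{U} = (H_{\omega_1}, \in, A)$ or a coded version $(\N, +, \times, A)$ where $A$ is a predicate coding the real $r$ that lists all the $N_i$; more concretely, I would use the standard second order characterization of $(\N,+,\times)$ as a rigid definable skeleton, and then adjoin a distinguished subset coding $r$. The second order sentence axiomatizing $S$ would assert: (1) the arithmetic part $(\N,+,\times)$ via its categorical $\Pi^1_1$ characterization $\theta_1$; (2) that the distinguished predicate $A$ codes precisely the canonical $r$ — and here the crucial move, exactly as in the proof of Theorem~\ref{one} and Theorem~\ref{ones}, is that under $AD^{L(\R)}$ the real $r$ has a projective (hence expressible in second order logic over $(\N,+,\times)$) definition, so $\theta_2(A)$ can say ``$A$ satisfies the defining formula of $r$''. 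Categoricity of $S$ then follows because $\theta_1$ pins the arithmetic part uniquely and $\theta_2$ pins $A$ uniquely, so any two models of $S$ are isomorphic. The interpretation clause is then immediate: since $A$ codes the list of all $N_i$, each countable model of $T$ is recovered inside $\mathcal{U}$ as a definable quotient $(\omega/E_i, R_i)$ read off from $r$, giving a uniform first order (hence a fortiori second order) interpretation.

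The main obstacle I anticipate is the definability of the canonical list $r$: we must ensure that the enumeration $\langle N_i : i<\omega\rangle$ of representatives is \emph{projective} (so that it is available to second order logic over $\N$), and that it is chosen canonically enough to be invariant, so that $\theta_2(A)$ is genuinely categorical rather than merely consistent. This is precisely where $AD^{L(\R)}$ does the work: the isomorphism relation on countable models of $T$ is a projective (Borel-over-$T$, in fact) equivalence relation with only countably many classes, and under $AD$ such a relation admits a definable selector picking one real from each class (via, e.g., the $<_{L(\R)}$-least code in each class, or via the scale/uniformization apparatus), yielding a single canonical real $r$ enumerating the transversal. Once $r$ is fixed as a definable object of $L(\R)$, the remaining steps are routine: writing $\theta_1$ (the known categorical characterization of arithmetic), writing $\theta_2$ (assert $A = r$ via its projective definition relativized to the arithmetic skeleton, exactly as $\theta_2(Q_+,Q_\times,A)$ was used in Theorem~\ref{one}), and exhibiting the uniform interpretation of each $N_i$ inside $\mathcal{U}$ by decoding the $i$-th block of $r$. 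The verification that $S$ is recursively axiomatized reduces to checking that $\theta_1 \wedge \theta_2$ is a single (hence recursive) second order sentence, which it is by construction.
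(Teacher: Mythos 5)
Your overall architecture (unique model $=$ arithmetic plus a distinguished predicate pinned down by a canonical real, with the countable models of $T$ read off from that real) matches the paper's, but there is a genuine gap at exactly the point you flag as ``the main obstacle'': the definability and, crucially, the \emph{complexity} of the canonical parameter. Since $T$ is only recursively axiomatized, its axioms have unbounded projective complexity, so the set $A$ of reals coding models of $T$ is merely $\Pi^1_\omega$ (a recursive intersection of $\Pi^1_n$ sets for unbounded $n$), though it is also $\bSigma^1_1$ because it is a countable union of isomorphism classes. Your proposed remedies for producing a canonical real do not work: under $AD^{L(\R)}$ there is no definable well-order of the reals in $L(\R)$, so ``the $<_{L(\R)}$-least code in each class'' is meaningless, and the scale/uniformization apparatus applied to a $\Pi^1_\omega$ (or higher) set yields selected points of complexity well beyond any fixed finite projective level. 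Consequently your claim that $S$ can be taken to be the \emph{single} sentence $\theta_1\wedge\theta_2$ is unjustified; nothing in your argument bounds the parameter at a finite projective level, and the theorem only asserts recursive axiomatizability.

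What the paper actually does to close this gap is the technical heart of the proof, and it is absent from your proposal. It does not select representatives of the isomorphism classes at all; instead it shows (by adapting Louveau's game argument, using Moschovakis's strategic basis theorem to get winning strategies $\sigma_n$ that are $\Delta^1_{\omega+1}$-singletons, and then Rudominer's theorem to reduce the parameter) that the whole set $A$ is $\Pi^1_2(r_0)$ for a real $r_0$ which is a $\Pi^1_\omega$ \emph{singleton}, i.e.\ $\{r_0\}=\bigcap_n X_n$ with each $X_n$ projective. Because $r_0$ is only pinned down by this infinite intersection, $S$ must be the infinite (but recursive) theory $\{\theta_1\}\cup\{\psi_n(Q_+,Q_\times,P): n<\omega\}$, and the interpretation of the models of $T$ is via the $\Pi^1_2(r_0)$ definition of $A$ ($s$ codes a model of $T$ iff the unique model satisfies $\eta(r_0,s)$), not via a listed transversal. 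To repair your proof you would need to supply this complexity analysis, or some substitute for it; as written, the step ``$r$ has a projective definition, hence $\theta_2$ is a second order sentence'' is where the argument fails.
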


\begin{proof}
Let $T$ be a recursively axiomatized second order theory with only countably many non-isomorphic countable models.  Let $A$ be the $\Pi^1_\omega$ (i.e. an intersection of a recursively coded family of sets each of which is $\Pi^1_n$ for some $n$) set of reals that code a model of $T$. Since $A$ is a countable union of equivalence classes of the $\Sigma^1_1$-equivalence relation of isomorphism, we may conclude that $A$ is $\bSigma^1_1$.

We wish to show that $A$ is $\Pi^1_2(r_0)$ in a parameter $r_0$ which is a $\Pi^1_\omega$ singleton.
For this, we mimic a proof of Louveau (Theorem 1 in \cite{MR757030}) to show:
\begin{theorem}\label{lou}
Assume $AD^{L(\R)}$. Every  $\bSigma^1_1$ set which is $\Pi^1_\omega$ is $\Pi^1_2(r_0)$ for some real $r_0$ such that  $\{r_0\}$ is a $\Delta^1_{\omega+1}$-singleton.
\end{theorem}
\begin{proof}
Let $A$ be a $\bSigma^1_1$ set that is also $\Pi^1_\omega$, say $A = \bigcap_n A_n$ with each $A_n$ being $\Pi^1_n$. Let also $U \subseteq (\omega^\omega)^2$ be a universal $\Sigma^1_1$ set.

We define for each $n$ a game $G_n$ on $\omega$ where players I and II take turns to play the digits of reals $\alpha$ and $\gamma$ respectively (there is no need to let II pass turns here). Then II wins a play of $G_n$ if $\alpha \in A \implies \gamma \in U$ and $\alpha \notin A_n \implies \gamma \notin U$. 
\[
\begin{array}{c|ccccc}
\text{I} & n_0 & & n_1 & & \cdots \\
\hline
\text{II} & & m_0 & & m_1 & \cdots
\end{array}
\quad
\begin{matrix}
\alpha \\ \gamma
\end{matrix}
\]
As in Louveau's proof, II has a winning strategy as follows: since $A$ is $\bSigma^1_1$, we have $A(x) \iff U(y,x)$ for some $y$, so II wins by playing the digits of $\langle y, \alpha \rangle$ (as I is playing the digits of $\alpha$). The complexity of the winning set for II in $G_n$ is $\Sigma^1_\omega$, so by Moschovakis's strategic basis theorem (\cite{MR2526093}, Theorem 6E.2), II has a winning strategy $\sigma_n$ that is a $\Delta^1_{\omega+1}$-singleton. Note that the pointclass $\Sigma^1_\omega$, i.e. the collection of countable unions of recursively coded families of projective sets, is both adequate and scaled (see Remark 2.2 in \cite{rudominer}, essentially Theorem 2.1 in \cite{MR0730592}).

Then the set $B_n = \set{y \mid (y * \sigma_n)_{\text{II}} \in U}$ is a $\Sigma^1_1(\sigma_n)$ set with $A \subseteq B_n \subseteq A_n$ (where $(y * \sigma_n)_{\text{II}}$ denotes the real $\gamma$ the strategy $\sigma_n$ produces as I plays $\alpha = y$), so altogether $A = \bigcap_n B_n$ is a $\Pi^1_2(s_0)$ set where $s_0 = \langle \sigma_n \mid n < \omega \rangle$ is a $\Delta^1_{\omega+1}$-singleton.
\end{proof}

We may reduce the complexity of the parameter down to being a $\Pi^1_\omega$ singleton by the following theorem of Rudominer:
\begin{theorem}[Rudominer \cite{rudominer}]\label{rudo}
Assume $AD^{L(\R)}$. Then every real $s_0$ which is a $\Sigma^1_{\omega+1}$ singleton, is recursive in a real $r_0$ which is a $\Pi^1_\omega$ singleton. \qed
\end{theorem}
Therefore the set $A$ is a $\Pi^1_2(r_0)$ set where $r_0$ is a $\Pi^1_\omega$ singleton. Let $\eta(r,s)$ be a second order $\Pi^1_2$ formula which defines the predicate $s\in A$ on $(\N,+,\times,r_0)$. Let $\theta_1(Q_+,Q_{\times})$ be the standard second order characterization of $(\N,+,\times)$, as above in the proof of Theorem~\ref{one}. Let  
$\psi_n(Q_+,Q_{\times},s)$, $n<\omega$, be second order formulas such that if $X_n$ is the set of reals $s$ satisfying  $\psi_n(Q_+,Q_{\times},s)$ in $(\N,+,\times)$, then $\{r_0\}=\bigcap_nX_n$. Let  $P$ be a new unary predicate symbol and

$$S=\{\theta_1(Q_+,Q_{\times})\}\cup\{\psi_n(Q_+,Q_{\times},P) : n<\omega\}.$$

Suppose $M$ is a model of $S$. W.l.o.g. the arithmetic part of $M$ consists of the standard $+$ and $\times$ on $\N$. Let $s$ be the interpretation of $P$ in $M$. Then $s=r_0$. Thus $S$ is categorical.  The theory $S$  is recursive because the proofs of Theorems~\ref{lou}~and~\ref{rudo} are sufficiently uniform. In conclusion, $M$ is categorically characterized by the recursive second order theory $S$.

Now the countable models of $T$ are interpretable in $S$ in the following sense: a real $s$ codes a model of $T$ if and only if $M\models\eta(r_0,s)$. We also get a translation of sentences: if $\phi$ is a second-order sentence in the vocabulary of $T$, letting $\hat \phi$ be the sentence $\exists X (\eta(r_0,X) \land X \models \phi)$, we have that $\phi \in T$ if and only if $\hat \phi \in S$.
\end{proof}

\section{Definable models of categorical theories}

Suppose we are given a categorical second order theory $T$. Naturally, we assume that $T$ has a model, otherwise categoricity is vacuous. But what can be said about the models of $T$ apart from their isomorphism with each other? In particular, can we always find a model which is definable in some reasonable sense, e.g. hereditarily ordinal definable?
To emphasize this point, consider the  second order sentence which characterizes  the structure $(\N,+,\cdot,0^\sharp)$.
This categorical sentence  has  no models in $L$. We ask, can we have a categorical sentence with no models in $\hod$? Since it could be that $V=\hod$, we are looking at this question under assumptions stronger than ZFC.

The following result of Kaplan and Shelah is useful for us:

\begin{theorem}[\cite{MR3601093}]\label{ks}
 If $\bP$ forces the collapse of $|\omega_2|$ to $\omega$, then
there is a $\bP$-term $\tau$ for a countable model such that
\begin{enumerate}
\item   If $G_1\times G_2$ is generic for $\bP \times \bP$ then 
$$V[G_1][G_2] \models M_1\cong M_2,$$
where $M_1$ is the interpretation $\tau^{G_1}$ of $\tau$ by $G_1$ and $M_2$ is  $\tau^{G_2}$.

\item  $\bP \force ``\tau$ is not isomorphic to $\check{M}$", for any $M$ in $V$.

\end{enumerate}

\end{theorem}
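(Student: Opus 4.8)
The plan is to reduce property (1) to a soft model-theoretic fact and to locate all of the difficulty in an \emph{absoluteness} phenomenon for the infinitary equivalence $\equiv_{\infty\omega}$. Recall Karp's theorem: two structures are $\equiv_{\infty\omega}$-equivalent exactly when Duplicator wins the full back-and-forth game, and a back-and-forth system between \emph{countable} structures can always be completed to an isomorphism; moreover $\equiv_{\infty\omega}$, as well as the Scott rank and Scott sentence of a countable structure, are absolute between transitive models of $\mathrm{ZF}^-$ containing the structure. Granting this, property (1) follows as soon as we arrange $\tau^{G_1}\equiv_{\infty\omega}\tau^{G_2}$ in $V[G_1][G_2]$, since both interpretations are countable there.

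For the structure itself I would let $\tau$ name a fixed ground-model object $T$ that $\bP$ turns countable (because $\bP$ collapses $|\omega_2|$, hence $|T|$, to $\omega$), decorated by a \emph{generic} object $b_G$ that the collapse creates but which does not exist in $V$ — the prototype being an $\omega_1^V$- or $\omega_2^V$-tree $T$ with no cofinal branch in $V$, together with a cofinal branch $b_G$ that appears only after the collapse. To obtain property (1) I would run a back-and-forth between $\tau^{G_1}$ and $\tau^{G_2}$ inside $V[G_1][G_2]$, extending a finite partial correspondence one element at a time. Each extension step is a density statement in $\bP\times\bP$, and the symmetry between the two mutually generic coordinates is exactly what guarantees that every finite partial approximation extends. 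This yields $\tau^{G_1}\equiv_{\infty\omega}\tau^{G_2}$, hence $\tau^{G_1}\cong\tau^{G_2}$ by Karp's theorem.

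For property (2) the key observation is that property (1) already pins down the isomorphism type from $V$'s point of view. Let $\sigma$ be the Scott sentence of $\tau^{G_1}$ computed in $V[G_1]$. By absoluteness of the Scott sentence and property (1), $\sigma$ is also the Scott sentence of $\tau^{G_2}$, so $\sigma\in V[G_1]\cap V[G_2]$; as $G_1,G_2$ are mutually generic this intersection is $V$, whence $\sigma\in V$. The decisive point is that in $V$ the object $\sigma$ is an $L_{\infty\omega}$-sentence of \emph{uncountable} quantifier rank — the Scott rank of $\tau^G$ is only countable in $V[G]$, where $\omega_1^V$ has become countable — so the completeness theorem for $L_{\omega_1\omega}$ does not apply and ``$\sigma$ has a model'' need not be absolute. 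I would then check that $\sigma$ entails, in the $L_{\infty\omega}$-semantics, the existence of a cofinal branch through a copy of $T$; since $T$ has no cofinal branch in $V$, $\sigma$ has no model in $V$. As the models of $\sigma$ are precisely the copies of $\tau^G$, this gives $\bP\force\tau\not\cong\check M$ for every $M\in V$, which is property (2).

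The hard part is constructing $T$ and $b_G$ so as to reconcile the two demands. Property (1) needs enough homogeneity that any two generic branches are interchangeable, so the back-and-forth closes up, while property (2) needs the resulting $\equiv_{\infty\omega}$-type to be genuinely new, so that no $V$-structure — not even one merely $\equiv_{\infty\omega}$-equivalent to $\tau^G$ — can carry the witnessing branch. These pull against each other: too much rigidity in $T$ breaks property (1), while too much homogeneity risks the type being realized already in $V$. Controlling the behaviour of ``has a cofinal branch'' under $\equiv_{\infty\omega}$ and under the collapse — ensuring the branch is an $\equiv_{\infty\omega}$-invariant feature that $V$ cannot supply but the collapse can — is the delicate combinatorial core, and is where I expect the real work to lie, including the reason $\bP$ must collapse $\omega_2$ specifically (to leave room for the requisite tree). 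Finally, for a completely arbitrary $\bP$ collapsing $|\omega_2|$ I would first extract from the generic surjection a branch that is generic for the branch-adding forcing, so that the analysis above applies uniformly.
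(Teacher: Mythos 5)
Your proposal takes a different route from the paper, which does not reprove this theorem but cites Kaplan--Shelah and sketches their construction: $\tau$ names a countable atomic model of the Laskowski--Shelah theory $T(A_\alpha)$ with $\alpha=\omega_2^V$; property (1) is then the uniqueness of the countable atomic model of a countable complete first order theory, and property (2) is the conjunction of the absoluteness of ``being an atomic model'' with the fact that $T(A)$ has no atomic model when $|A|\geq\omega_2$. Your reduction of (1) to Karp's theorem and of (2) to absoluteness of $L_{\infty\omega}$-satisfaction is reasonable in outline, but the proof has a genuine gap: the object $\tau$ is never constructed, and everything you defer (``I would then check'', ``where I expect the real work to lie'') is exactly where the theorem lives.

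Two of the steps you do assert would moreover fail as stated. First, for (1), symmetry between the two mutually generic coordinates does not make the back-and-forth close up: since $\bP$ collapses a cardinal it adds a new subset $\dot c$ of $\omega$, and the name for $(\omega,<,\dot c)$ is perfectly symmetric between the coordinates, yet its two interpretations are non-isomorphic because $(\omega,<)$ is rigid and $\dot c^{G_1}\neq\dot c^{G_2}$ by mutual genericity. Closing the back-and-forth is a property of the (missing) construction, not of genericity. Second, for (2), the claim that $\sigma$ ``entails the existence of a cofinal branch through a copy of $T$'' is problematic: having a cofinal branch is not $L_{\infty\omega}$-expressible, and even if the branch is carried as a predicate of the structure, a $V$-model $M\models\sigma$ is only guaranteed isomorphic to $\tau^G$ after it becomes countable in $V[G]$; in $V$ its tree reduct need not be isomorphic to $T$ at all, so it may perfectly well carry a branch in $V$. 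Ruling this out is the delicate point, and it cannot be soft: Kaplan and Shelah prove the converse direction too, so a forcing collapsing $\omega_1$ but not $\omega_2$ admits no such $\tau$, which means any correct construction must use the collapse of $\omega_2$ in an essential, non-generic-branch way --- as the atomic-model construction does via the threshold $|A|<\omega_2$ for the existence of atomic models.
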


We make some observations about the proof. It involves a construction of Laskowski and Shelah:
\begin{theorem}[\cite{LasShe}]
There is a countable consistent first order theory $T$, with a predicate $V$ in its vocabulary, having the following property. For any model $M \models T$ and any $A \subseteq V^M$, isolated types are dense over $A$ but the theory $T(A) = \Th(M,a)_{a \in A}$ has an atomic model if and only if $\abs{A} < \omega_2$.
\end{theorem}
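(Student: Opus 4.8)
The plan is to build the theory $T$ so that the task of producing an atomic model over a parameter set $A \subseteq V^M$ is converted into a purely combinatorial transversal problem on $A$ whose solvability jumps exactly at $\omega_2$, while keeping density of isolated types as a finitary feature of the construction that holds for every $A$. Concretely, I would take a vocabulary containing the unary predicate $V$ together with auxiliary sorts and relations whose restriction to any finite tuple from $V$ presents an $\aleph_0$-categorical (or at least atomic) local theory; this is what forces density of isolated types to be automatic. The essential design choice is to arrange that each element lying outside $V$ codes a finite approximation to a global function on $V$, so that an atomic model—one in which every element realizes a principal type and thus commits to only finitely much information—is precisely a model carrying a single coherent global function on $A$ that simultaneously honours all the local constraints.

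First I would verify density of isolated types, which is the soft direction. Density over $A$ quantifies over formulas $\phi(x,\bar a)$ with $\bar a$ a finite tuple from $A$, so it reduces at once to density over finite parameter sets. Since $T$ is a fixed countable theory whose restriction to every finite subset of $V$ is atomic by construction, each consistent $\phi(x,\bar a)$ extends to a principal complete type, and density follows for arbitrary $A$ irrespective of $\abs A$.

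Next I would prove the two halves of the cardinality equivalence, both of which turn on the same combinatorial dichotomy for the embedded transversal problem: a solution exists on every set of size at most $\omega_1$, but on no set of size $\omega_2$. For countable $A$ one may invoke Vaught's theorem directly, since $T(A)$ is then a complete countable theory and density of isolated types yields an atomic model. For $\abs A = \omega_1$ the language $T(A)$ is already uncountable, so Vaught's theorem no longer applies and the atomic model must instead be assembled from a transversal on $A$; here I would construct the transversal by a transfinite recursion of length $\omega_1$, exploiting that along an increasing continuous exhaustion of $A$ by countable pieces only countably many constraints are active at each stage. For $\abs A \ge \omega_2$ I would argue that no atomic model can exist: such a model would furnish a solution to the transversal problem on a set of size $\omega_2$, and I would derive a contradiction from an Erd\H{o}s--Rado or $\Delta$-system argument showing that no such solution survives at $\omega_2$.

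The main obstacle is the simultaneous calibration of these features: the theory must be locally so tame that isolated types are dense over \emph{every} $A$, yet globally harbour an obstruction that appears precisely at $\omega_2$ and not already at $\omega_1$. The familiar Sierpi\'nski-style colorings obstruct at $\omega_1$ and so are useless here, which means the construction has to genuinely exploit the second uncountable cardinal—encoding a configuration (for instance via a tree of height $\omega_1$ with many branches, or a function $[\,\cdot\,]^2 \to \omega_1$) that admits the required transversal on every set of size $\omega_1$ but provably not on any set of size $\omega_2$. Landing this threshold exactly at $\omega_2$ while respecting the finitary density constraint is the heart of the Laskowski--Shelah argument.
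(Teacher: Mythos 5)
The theorem you are asked to prove is cited by the paper from Laskowski--Shelah; the paper itself gives no proof, only the definition of $T$ as the theory of the Fra\"iss\'e limit of a finitely axiomatized class $K$ of finite structures in the vocabulary $\set{U,V,p,R_n,f_n}_{n<\omega}$. Your outline does match the known strategy in broad shape: a Fra\"iss\'e-style theory, an equivalence between atomicity of a model of $T(A)$ and a coherence/transversal problem on $A$, the countable case by the classical atomic-model existence theorem, an $\omega_1$-length recursion for $\abs A=\aleph_1$, and a combinatorial impossibility at $\aleph_2$.

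However, as a proof there are genuine gaps. First and most importantly, you never construct $T$: the entire content of the theorem is the existence of a specific theory with these properties, and ``a vocabulary containing $V$ together with auxiliary sorts whose restriction to finite tuples is atomic'' is a description of a goal, not an object. Second, your density argument is wrong as stated: ``isolated types are dense over $A$'' requires, for each consistent $\phi(x,\bar a)$, an isolating formula with finitely many parameters that \emph{decides every formula with parameters anywhere in $A$}; this does not ``reduce at once to density over finite parameter sets,'' and arranging it for uncountable $A$ is precisely where the specific structure of the Fra\"iss\'e class (the interaction of $p$ and the $f_n$ with $V$) is used. Third, the exact calibration of the threshold at $\omega_2$ --- which you yourself identify as ``the heart of the Laskowski--Shelah argument'' --- is left entirely open: you gesture at Erd\H{o}s--Rado, $\Delta$-systems, trees of height $\omega_1$, or colourings into $\omega_1$, but none of these is shown to produce an obstruction that appears at $\aleph_2$ and provably not at $\aleph_1$, nor is the positive $\aleph_1$-construction (which requires that atomicity survive unions of $\omega_1$-chains of the models being built) actually carried out. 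What you have written is an accurate road map of what a proof must accomplish, with the three substantive steps --- the construction, the density verification over infinite parameter sets, and the $\omega_2$ combinatorics --- all deferred.
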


The theory $T$ is as follows. Let $L$ be a countable vocabulary consisting of two unary predicates $U, V$, one unary function symbol $p$, as well as binary relations $R_n$ and binary functions $f_n$ for $n < \omega$ (the functions will not be total, but instead have domain $U$). Let $K$ be the collection of all finite $L$-structures satisfying a certain finite list of first order axioms (see \cite{LasShe}). Let $\mathcal B$ be the Fraïsse limit of $K$ and let $T = \Th(\mathcal B)$. The theory $T$ is well defined since $\mathcal B$ is unique up to isomorphism.

We then form an uncountable model of the theory $T$ as follows. For an ordinal $\alpha$ let $L_\alpha$ be the vocabulary $L$ together with $\alpha$ many new constant symbols $c_\beta$, $\beta <\alpha$. Using a standard Henkin construction, we form a term model for the theory $T$ together with the additional axioms stating that the new constant symbols name distinct elements.  We let $T(A_\alpha)$ be the theory of this term model in the vocabulary $L_\alpha$. (Although the Henkin construction involves forming the completion of a theory, we can make the choice of which completion to use definable by referring to the well-ordering of the sentences.)

We can also observe that for a countable ordinal $\alpha$, the class of countable atomic models of $T(A_\alpha)$ is definable from $T(A_\alpha)$, which itself is definable from $\alpha$, and the definitions can be carried out in $H(\omega_1)$. Using these two observations, the following obtains:
 

\begin{theorem}[ZF] \label{nothod}
Assume $\omega_2^{\scriptsize\hod}$ is countable. Then there is a countable model M such that
\begin{enumerate}
\item The isomorphism class of M is ordinal definable.
\item  There is no model in $\hod$ which is isomorphic to M.
\end{enumerate}
Moreover, if the property of a linear order of being of order-type $\omega_2^{\scriptsize \hod}$ is second order definable in the countably infinite structure of the empty vocabulary, then the second order theory of $M$ is finitely axiomatizable.
\end{theorem}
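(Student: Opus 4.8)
The plan is to let $\alpha=\omega_2^{\hod}$ and take $M$ to be a countable atomic model of the Laskowski--Shelah theory $T(A_\alpha)$. Since $\omega_2^{\hod}$ is assumed countable, $\alpha$ is a countable ordinal, so $T(A_\alpha)$ is a complete theory in a \emph{countable} vocabulary; as isolated types are dense in $T(A_\alpha)$ it has a countable atomic model, and any two such are isomorphic, so $M$ is well defined up to isomorphism. For item~(1), observe that $\alpha$ is ordinal definable, that $T(A_\alpha)$ is definable from $\alpha$, and that the class of countable atomic models of $T(A_\alpha)$ is definable from $T(A_\alpha)$ (all these definitions being executable in $H(\omega_1)$, as noted just above). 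This class is exactly the isomorphism class of $M$, so that class is ordinal definable.

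For item~(2), suppose toward a contradiction that some $M'\in\hod$ is isomorphic to $M$. The construction of $T(A_\alpha)$ from the ordinal $\alpha\in\hod$ is an absolute transfinite recursion (the Fra\"iss\'e limit $\mathcal B$ and the Henkin completion are determined canonically by $\alpha$), so $T(A_\alpha)$ is computed identically in $\hod$ and in $V$. Consequently the statement ``$M'$ is an atomic model of $T(A_\alpha)$'' --- which refers only to $M'$, to $T(A_\alpha)$, and to first order satisfaction and provability --- is absolute between $\hod$ and $V$. As $M'\isom M$ and $M$ is an atomic model of $T(A_\alpha)$ in $V$, we conclude that $M'$ is an atomic model of $T(A_\alpha)$ in $\hod$. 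But inside $\hod$ we have $\abs{A_\alpha}=\abs{\alpha}=\omega_2^{\hod}$, which is not less than $\omega_2^{\hod}$, so by the Laskowski--Shelah theorem applied in $\hod$ the theory $T(A_\alpha)$ has \emph{no} atomic model there, a contradiction.

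For the final assertion, let $\chi(\leq)$ be the hypothesized second order formula holding of a binary relation $\leq$ on a countably infinite domain exactly when $\leq$ well-orders it in order type $\omega_2^{\hod}$. The plan is to produce a single second order sentence $\Phi$, in a finite vocabulary coding the countable vocabulary of $M$, with $N\models\Phi$ iff $N\isom M$; then $\Th_2(M)=\set{\psi:\Phi\models\psi}$ is finitely (indeed singly) axiomatized. The sentence $\Phi$ asserts: the domain is countably infinite (using that $\aleph_0$ is second order characterizable); there is a relation $\leq$ with $\chi(\leq)$, furnishing an internal copy of $\alpha=\omega_2^{\hod}$; relative to this copy of $\alpha$ the theory $T(A_\alpha)$ is reconstructed as a set of G\"odel codes of $L_\alpha$-sentences, using the standard second order characterization $\theta_1$ of $(\N,+,\times)$ to manage the symbols $R_n,f_n$ and to code syntax; and finally that the ambient structure is an atomic model of this reconstructed $T(A_\alpha)$. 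Any $N\models\Phi$ is then a countable atomic model of $T(A_\alpha)$, hence isomorphic to $M$, while $M\models\Phi$; so $\Phi$ is categorical and $\Th_2(M)$ is finitely axiomatizable.

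The main obstacle is the last two clauses of $\Phi$: verifying that, once an internal copy of $\alpha$ is available, the entire Henkin construction of $T(A_\alpha)$ and the assertion of atomicity can be packaged into one second order formula. This rests on the fact that second order logic over a countably infinite domain can simulate arbitrary $H(\omega_1)$-constructions --- quantifying over all subsets and relations of the domain amounts to quantifying over all countable structures coded on it --- which is exactly what internalizes the $H(\omega_1)$-definability of $T(A_\alpha)$. One must also check that the copy of $\alpha$ delivered by $\chi$ can be coherently matched with the interpretations of the constants of $L_\alpha$ (which lie inside the predicate $V$), so that the reconstructed theory is the genuine $T(A_\alpha)$ rather than merely one of the same abstract form; it is precisely here that the extra hypothesis on $\chi$ --- replacing an infinite axiom scheme fixing $\alpha$ by a single sentence --- becomes essential for \emph{finite} axiomatizability.
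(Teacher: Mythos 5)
Your proof is correct and follows essentially the same route as the paper's: take $M$ to be a countable atomic model of the Laskowski--Shelah theory $T(A_\alpha)$ for $\alpha=\omega_2^{\hod}$, get ordinal definability of the isomorphism class from the definability of $T(A_\alpha)$ from $\alpha$, rule out a copy in $\hod$ via absoluteness of atomicity together with the nonexistence of atomic models of $T(A_\alpha)$ when the constant set has size $\omega_2$, and package ``I am isomorphic to a countable atomic model of $T(A_\alpha)$'' into a single second order sentence using the hypothesized definability of the order type $\omega_2^{\hod}$. Your write-up is in fact somewhat more explicit than the paper's about why $T(A_\alpha)$ is computed identically in $\hod$ and in $V$ and about how the second order sentence is assembled, but the underlying argument is the same.
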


\begin{proof}
Let $\alpha = \omega_2^{\text{HOD}}$. Let $T(A_\alpha)$ be the theory constructed above. Finally,  let $M$ be a countable atomic model of $T(A_\alpha)$. Since $\hod$ satisfies  $\abs{T(A_\alpha)} = \omega_2$, the theory $T(A_\alpha)$ has no atomic model in $\hod$, but as being an atomic model is absolute, this shows that there is no model in $\hod$ isomorphic to $M$.

The isomorphism class of $M$ is ordinal definable as the class of countable atomic models of $T(A_\alpha)$, which is definable from $\alpha$. Additionally, if $\alpha$ is second order definable in the countably infinite structure of the empty vocabulary, we can define the theories $T$ and $T(A_\alpha)$ in second order logic  expressing ``I am isomorphic to a countable atomic model of $T(A_\alpha)$'' with a single second order sentence. This finitely axiomatizes the second order theory of $M$.
%
%
\end{proof}

Of course, the assumption that $\omega_2^{\scriptsize \hod}$ is second order definable in the countably infinite structure of the empty vocabulary is somewhat ad hoc. However, it holds, for example, in $L[G]$, where $G$ is $P$-generic over $L$ for $P = \Coll(\omega, <\omega_3)^L$.
This is because the poset $P$ is weakly homogeneous, so $\hod^{L[G]} = \hod^L(P) = L$, whence $\omega_2^{\scriptsize \hod} = \omega_2^L$ is countable and second order definable in any countable model in $L[G]$.

\medskip


We also obtain the following variation:

\begin{corollary}\label{cor}
Assume  $ZFC + AD^{L(\R)} + ``\hod \hspace{2pt}\cap\hspace{2pt} \R = \hod^{L(\R)} \cap\hspace{2pt} \R"$ and that $\omega_2^{\scriptsize \hod}$ is definable in $\hod^{L(\R)} \restriction \Theta^{L(\R)}$ and countable. Let $M$ be the countable model of Theorem \ref{nothod}. Let $N = (\Theta^{L(\R)},<,M)$ (w.l.o.g. the domain of $M$ is $\omega$).  Then the second order theory of $N$ is finitely axiomatizable and categorical but has no model which belongs to $\hod$.
\end{corollary}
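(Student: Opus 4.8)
The plan is to exhibit a single second order sentence $\sigma$ in the vocabulary $(<,M)$ with the property that $P\models\sigma$ if and only if $P\cong N$, for every structure $P$. This suffices for both claimed conclusions: such a $\sigma$ lies in $\Th_2(N)$ and every model of $\sigma$ is isomorphic to $N$ and hence a model of $\Th_2(N)$, so $\{\sigma\}$ axiomatizes $\Th_2(N)$ (finite axiomatizability) while all models of $\Th_2(N)$ satisfy $\sigma$ and are therefore isomorphic (categoricity). I would assemble $\sigma$ as a conjunction of four clauses. The first clause asserts that $<$ well-orders the domain, which is expressible in second order logic (linearity together with ``every nonempty subset has a $<$-least element''); thus the domain of any model of $\sigma$ is identified with an ordinal $\delta$. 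Because the intended domain $\Theta^{L(\R)}$ is large, second order logic over $P$ gives access to all relations of cardinality $\abs\delta$, and in particular lets us quantify over coded copies of $\hod^{L(\R)}\restriction\Theta^{L(\R)}$.

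The central clause forces $\delta=\Theta^{L(\R)}$. For this I would use the definability theory of $\hod^{L(\R)}$ under $AD^{L(\R)}$: there is a second order sentence $\theta_{\hod}$ asserting the existence of a well-founded extensional relation $E$ on a subset of the domain whose transitive collapse is (isomorphic to) $\hod^{L(\R)}\restriction\Theta^{L(\R)}$. Concretely, $\theta_{\hod}$ says the collapse satisfies the canonical first order theory of $\hod^{L(\R)}$ truncated below its $\Theta$, together with a maximality requirement pinning the ordinal height of the collapse to the full domain $\delta$ (for example, that $\delta$ is the largest cardinal of the collapse and is Woodin there, with no further reflection above it). Since $\Theta^{L(\R)}$ is the unique ordinal for which this reconstruction produces the genuine structure $\hod^{L(\R)}\restriction\Theta^{L(\R)}$, the clause $\theta_{\hod}$ holds exactly when $\delta=\Theta^{L(\R)}$. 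The hypothesis $\hod\cap\R=\hod^{L(\R)}\cap\R$ enters here to synchronize the two HODs, ensuring that the reconstructed $\hod^{L(\R)}\restriction\Theta^{L(\R)}$ computes the same ordinal $\omega_2^{\hod}$ as $V$'s $\hod$ does.

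With $\hod^{L(\R)}\restriction\Theta^{L(\R)}$ now available as a second-order-reconstructed structure, the assumption that $\omega_2^{\hod}$ is definable in it lets the third clause of $\sigma$ define, without parameters, the ordinal $\alpha=\omega_2^{\hod}$ as a specific element below $\delta$. Since $\alpha$ is thereby second order definable and countable, the first order theory $T(A_\alpha)$ constructed in the proof of Theorem~\ref{nothod} is itself second order definable over $N$, so the fourth clause can assert that the predicate $M$ encodes a countable atomic model of $T(A_\alpha)$. By the uniqueness of countable atomic models of a complete countable first order theory (\cite{MR3601093}), this clause determines $M$ up to isomorphism. The conjunction of the four clauses is the desired $\sigma$, establishing finite axiomatizability and categoricity.

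It remains to observe that $N$ has no isomorphic copy in $\hod$. The model $M$ is encoded by a relation of the structure $N$ itself, so any $N'\cong N$ with $N'\in\hod$ directly provides a copy of $M$ among its relations, an object lying in $\hod$, contradicting clause~(2) of Theorem~\ref{nothod} (whose hypothesis that $\omega_2^{\hod}$ is countable is assumed here). The main obstacle is the central clause: producing a second order sentence $\theta_{\hod}$ that correctly characterizes $\hod^{L(\R)}\restriction\Theta^{L(\R)}$ and forces the order type to be exactly $\Theta^{L(\R)}$, which draws on the fine-structural and definability theory of $\hod^{L(\R)}$ under $AD^{L(\R)}$ together with the careful use of the reals-agreement hypothesis.
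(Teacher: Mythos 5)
Your overall architecture coincides with the paper's: both proofs characterize $N$ up to isomorphism by a single second order sentence that (a) says $<$ is a well-order, (b) recovers $\hod^{L(\R)}\restriction\Theta^{L(\R)}$ inside second order logic over the domain, (c) uses the definability hypothesis to extract $\alpha=\omega_2^{\scriptsize\hod}$, and (d) pins $M$ down as the countable atomic model of $T(A_\alpha)$; and your ``no model in $\hod$'' argument is exactly the intended one (a copy of $N$ in $\hod$ yields a copy of $M$ in $\hod$, contradicting clause (2) of Theorem~\ref{nothod}). The difference lies entirely in how step (b) is discharged, and there your proposal has a genuine gap.

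You propose to characterize $\hod^{L(\R)}\restriction\Theta^{L(\R)}$ by requiring a well-founded extensional $E$ whose collapse ``satisfies the canonical first order theory of $\hod^{L(\R)}$ truncated below its $\Theta$,'' together with an informal maximality clause. This cannot work as stated: no first order theory determines a structure of size $\Theta^{L(\R)}$ up to isomorphism (the collapse of any elementary substructure satisfies the same theory), and ``$\delta$ is the largest cardinal of the collapse and is Woodin there, with no further reflection above it'' is neither a precise second order condition nor one that excludes non-isomorphic models realizing the same configuration. The paper instead invokes the HOD analysis for $L(\R)$ under $AD^{L(\R)}$ (Theorem 3.10 of Chapter 23 of the Handbook), which identifies $\hod^{L(\R)}\restriction\Theta^{L(\R)}$ fine-structurally, essentially as the direct limit of a definable directed system of countable iterable mice; it is this that makes the structure second order characterizable, because iterability reduces to well-foundedness of countable iterations and is therefore expressible in second order logic over any infinite domain --- the same device used in the paper's $L[U]$ argument. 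You flag this step as the main obstacle, but flagging it is not the same as supplying the necessarily fine-structural ingredient; without it the central clause of your $\sigma$ does not pin down $\delta=\Theta^{L(\R)}$, nor the structure from which $\omega_2^{\scriptsize\hod}$ is to be read off. (A smaller point: your gloss that $\hod\cap\R=\hod^{L(\R)}\cap\R$ serves to make the reconstructed structure ``compute the same $\omega_2^{\scriptsize\hod}$'' is unjustified --- agreement of reals does not transfer $\omega_2$ --- but since the corollary directly assumes $\omega_2^{\scriptsize\hod}$ is definable in $\hod^{L(\R)}\restriction\Theta^{L(\R)}$, nothing in the argument actually hinges on that gloss.)
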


%

\begin{proof}
We can use  \cite[Theorem 3.10, Chapter 23]{handbook})
to define $\hod^{L(\R)} \restriction \Theta^{L(\R)}$ and $L_{\Theta^{L(\R)}}(\R)$ from $\Theta^{L(\R)}$ in second order logic, which then allows us to define $\omega_2^{\scriptsize \hod}$ and $M$ as in Theorem \ref{nothod}. 
\end{proof}

The assumptions of Corollary~\ref{cor} follow, for example, from $ZFC + AD^{L(\R)} + V = L(\R)[G]$, where $G$ is $\Pmax$-generic, as then $\hod^{L(\R)} = \hod^{L(\R)[G]}$ and $\omega_2^{\scriptsize \hod}$ is countable.

\section{Open questions}

%
The following question was raised by Solovay \cite{Solo}: 

\begin{problem}
Assuming $V=L$, is every recursively axiomatized complete second order theory categorical? 
\end{problem}

Our results  do not solve this  one way or another, and it remains an interesting open question. In $L[U]$ there are recursively axiomatized complete non-categorical second order theories, but we do not know if such theories necessarily have only large models:

\begin{problem}
Suppose $V = L[U]$, $\kappa$ is the sole measurable cardinal of $L[U]$, and $T$ is a complete recursively axiomatized second order theory that has a model of cardinality $\lambda < \kappa$ such that $\lambda$ is second order characterizable. Is $T$ categorical?
\end{problem}

There are many other open questions related to finite or recursively axiomatized complete second order theories with uncountable models. We showed that we can force categoricity for successor cardinals of regular cardinals, and some singular limit cardinals, but the following two cases were left open:

\begin{problem}
Can we always force the categoricity of all finite complete second order theories with a model of cardinality $\kappa$, where $\kappa$ is either a regular (non-measurable) limit cardinal, or singular of cofinality $\omega$?
\end{problem}


An $I_0$-\emph{cardinal} is a cardinal $\lambda$ such that there is $j \colon L(V_{\lambda+1})\to L(V_{\lambda+1})$ with critical point below $\lambda$. Note that then $\lambda$ is singular of cofinality $\omega$, $\lambda^+$ is measurable in $L(V_{\lambda+1})$ (\cite{extender}), and the Axiom of Choice fails in $L(V_{\lambda+1})$ (\cite{MR0311478}). This is in sharp contrast to the result of Shelah that if $\lambda$ is a singular strong limit cardinal of uncountable cofinality, then $L(\P(\lambda))$ satisfies the Axiom of Choice (\cite{MR1462202}).
Since Axiom of Choice fails in $L(V_{\lambda+1})$, there can be no  well-order of $\P(\lambda)$ which is second order definable on $\lambda$. This raises the following question:

\begin{problem}
Is every finite complete second order theory with a model of cardinality of an $I_0$-cardinal categorical (or, at least categorical among all models of that cardinality)?
\end{problem}


\end{document}